\newtheorem{theorem}{Theorem}[section]
\newtheorem{lemma}[theorem]{Lemma}
\newtheorem{proposition}[theorem]{Proposition}
\newtheorem{definition}[theorem]{Definition}
\newtheorem{rk&ex}[theorem]{Remarks \& Examples}
\def\NN{{\mathbb N}}
\def\RR{{\mathbb R}}
\def\E{\mathscr E}
\def\F{\mathscr F}
\def\J{\mathscr J}
\def\eps{\varepsilon}
\def\e{\varepsilon}
\def\vphi{\varphi}
\def\na{\nabla}
\def\pa{\partial}
\def\ds{\displaystyle}
\title{Liquid Drops sliding down an inclined plane}
\author{Inwon Kim\thanks{Department of Mathematics, UCLA, Los Angeles CA 90095, USA. E-mail: ikim@math.ucla.edu. Partially supported by NSF Grant DMS-0970072.} and Antoine Mellet \thanks{Department of Mathematics, University of Maryland,
College Park, MD 20742, USA. E-mail: mellet@math.umd.edu.  Partially supported by NSF Grant DMS-0901340}}
\begin{document}

\maketitle
\begin{abstract}
We investigate a one-dimensional model describing the motion of liquid drops sliding down an inclined plane (the so-called quasi-static approximation model). 
We prove existence and uniqueness  of a solution and investigate its long time behavior for both homogeneous and inhomogeneous medium (i.e. constant and non-constant contact angle). We also obtain some homogenization results.
\end{abstract}

\section{Introduction}
\subsection{Equilibrium drops and quasi-static approximation}
Consider a liquid drop lying on an inclined plane.  We introduce a coordinate system such that this plane is the $(x,y)$-plane, and we assume that the free surface of the drop (the liquid/vapor interface) can be described as the graph of a function $(x,y)\mapsto u(x,y)$.
For small droplets, the energy of an equilibrium drop can be approximated by
\begin{equation}\label{eq:JJE}
\J(u) = \sigma \int_{\RR^2}  \frac{1}{2}|\na u|^2 +\beta \chi_{\{u>0\}} \,dx\,dy +  \int_{\RR^2} \!\int_0^{u(x,y)} \Gamma  \, dz\, dx\, dy
\end{equation}
where  $\sigma$ denotes the surface tension coefficient and $\beta(x,y)$ is the relative adhesion coefficient between the fluid and the solid.
If the support plane  is inclined at an angle $\alpha$ to the horizontal in the $x$-direction, the gravitational potential $\Gamma$ can be written as
\begin{equation}\label{eq:gamma}
\Gamma = \rho g(z  \cos \alpha - x  \sin \alpha)
\end{equation}
and the Euler-Lagrange equation for the minimization of $\J$ with volume constraint is the following equation (known as the  Young-Laplace equation):
\begin{equation} \label{eq:euler}
\left\{
\begin{array}{ll}
-\Delta u = \lambda - u  \kappa  \cos\alpha + x  \kappa   \sin \alpha \quad & \mbox{ in } \{u>0\}\\[5pt]
\frac{1}{2} |\na u | ^2 = \beta & \mbox{ on } \pa\{u>0\}.
\end{array}
\right.
\end{equation}
with $\kappa= \rho g/\sigma$ and where $\lambda$ is a Lagrange multiplier associated with the volume constraint.
Note that the Dirichlet integral $\int |\na u|^2\,dx\,dy$ is an approximation  of 
the surface tension energy, which classically involves the perimeter $\int \sqrt{1+|\na u|^2}\, dx\,dy$  (see \cite{F}). 
This perimeter functional leads to a mean curvature operator which would be much more delicate to deal with than the Laplace operator appearing in \eqref{eq:euler}.

\vspace{10pt}

It is well-known (see \cite{F}) that when $\beta$ is constant and $\kappa>0$, then (\ref{eq:euler})  has no solution (this can be seen by multiplying the first equation in (\ref{eq:euler}) by $u_x$ and integrating over $\{u>0\}$ - it is also obvious that any translation of $u$ down the $x$-axis will decrease the energy $\J$). 

This means that on a perfectly homogeneous surface, any drop should  slide down the inclined plane, no matter how small the inclination of the plane $\alpha$. 
This is not the case when $\beta$ is a function of $x$. In that case, one expects some drops to stick to the inclined plane, at least for small inclination (or small volume). 
This sticking phenomenon was rigorously investigated by Caffarelli-Mellet \cite{CM} when $\beta$ is assumed to be periodic with period $\eps\ll1$.

In any case, large drops will always eventually slide down the inclined plane and it is the purpose of this paper to investigate the motion of such drops, when $\beta$ is constant (homogeneous surface) and when it is not (heterogeneous surface).
There is a  number of papers (see for instance \cite{Blake,G,HM,LeGrand} and references therein) in the fluid dynamics literature discussing the behavior of sliding drops. Interesting phenomena are observed. In particular, the shape of the drop can change drastically when the velocity is increased, and a singularity (corner) develops at the rear of the drop when the velocity exceed a certain critical value (and a cusp may form
at even higher velocity).

In this paper, we aim at studying such phenomena in the framework of a (relatively) simple model for the motion of liquid drops usually referred to as the quasi-static approximation.
We will see that this model does allow for singularity formation of the type discussed above.

\subsection{The quasi-static approximation model.}
In the quasi-static approximation regime,  the speed of the contact line is much slower than the capillary relaxation time, so that at each instant there is a balance between gravitational force and surface tension (see \cite{G,HM}).
The free surface of the drop is thus at equilibrium at all time, and can be described by a function $u(x,y,t)$ solution of: 
\begin{equation} \label{eq:motion3d-1}
-\Delta u = \lambda(t) - u \kappa \cos\alpha + x \kappa \sin \alpha \quad \mbox{ in } \{u(t)>0\},
\end{equation}
where the constant $\lambda(t)$ is determined by the volume constraint
$$ \int u(x,t)\, dx = V_0.$$

Along the free boundary (or contact line) $\pa\{u>0\}$, the equilibrium contact angle condition is assumed to be satisfied only at some microscopic scale, and the deviation of the ``apparent" contact angle $\theta=|\na u|$ from the equilibrium value $\theta_e=\sqrt{2\beta}$ is responsible for the motion of the contact line.
The relation between the velocity of the contact line and the contact angle $\theta$ is not a settled issue, and various velocity laws have been proposed (see \cite{G}, \cite{HM}, \cite{LeGrand}) and studied (see \cite{GlK}).
Following Blake and Ruschak  \cite{Blake}, we assume that the speed is proportional to $\theta^2-\theta_e^2$.
More precisely, we assume that the normal velocity $V$ of the contact line $\pa\{u>0\}$ is given by
\begin{equation} \label{eq:motion3d-1b}
V= \frac{1}{2} |\na u | ^2 - \beta \qquad \mbox{ on } \pa\{u>0\}.
\end{equation}
This particular velocity law has the advantage of leading to a gradient flow formulation for \eqref{eq:motion3d-1}-\eqref{eq:motion3d-1b}. This gradient flow formulation will be discussed in the next section, but let us already stress out that \eqref{eq:motion3d-1b} leads to an obvious problem when the drop is sliding down: In the rear of the drop (where $V<0$), the speed $|V|$ is bounded by $\beta$ while in the front of the drop (where $V>0$), the speed can increase without bounds (and is expected to increase if the volume of the drop, or the inclination increase).
This would lead to ever-increasing support of the drop, which is  incompatible with \eqref{eq:motion3d-1}  (indeed, we will see \eqref{eq:motion3d-1} does not admits positive solutions on large domains).
This problem can be fixed by noticing that \eqref{eq:motion3d-1b} should not be expected to hold in the degenerate case where $\na u=0$ along the free boundary.
In the next section, we will see that the gradient flow formulation naturally leads to an obstacle problem type formulation of the velocity law of the form (see \eqref{eq:motion3d-2})
\begin{equation}\label{eq:vel0}
 \min\left\{-V+\frac{1}{2} |\na u | ^2 - \beta\, ,\,  |\na u|\right\}=0 \quad \mbox{ on } \pa\{u>0\}.
\end{equation}

The free boundary problem  (\ref{eq:motion3d-1})-(\ref{eq:vel0}) has been studied by Grunewald and Kim \cite{GK} in the particular case $\alpha=0$. In that case, we always have $|\na u|>0$ along the free boundary, and so \eqref{eq:motion3d-1b} holds (instead of (\ref{eq:vel0})).
In the present paper, we are interested in the case $\alpha>0$, for which the degeneracy of $|\na u|$ cannot be ruled out.
In  that case, the existence of solutions (which is the main focus in \cite{GK}) is not the only interesting issue.
Indeed, one would also like to know whether a given drop will stick  or will slide down the plane. And when it does slide down the plane, one would like to characterize the motion of the drop. 
We will see that there exist some traveling wave type solutions which describe the asymptotic  speed and profile of any sliding drops. Finally, one would like to determine the effects of heterogeneities of the inclined plane on the motion of the sliding drops. We refer to  \cite{K,K2} for results concerning   the motion of liquid droplets on a {\it horizontal} heterogenous plane (see also \cite{KM1} for similar results in the framework of Hele-Shaw flow).

Such issues are rather delicate to investigate in full generality. In this paper, we develop the analysis of  (\ref{eq:motion3d-1})-(\ref{eq:vel0}) (and answer all the above questions) in the one-dimensional case. 
In particular, we prove the existence and uniqueness of solutions (for general functions $\beta$) and we study their long time behavior when $\beta$ is constant, and when $\beta$ is periodic.
This one-dimensional model could be interpreted as  modeling the motion of a fixed volume of fluid initially placed uniformly across the inclined plane (ridge of fluid). However, such a configuration is not usually stable (the leading edge may become unstable in the span-wise direction as it flows down the inclined plane). We thus prefer to think of this study as a starting point for a more general study involving the higher dimensional case.

As we will see, even this (simpler) case already leads to interesting behaviors. In particular, the profile of the drop may degenerate in the rear of the drop (touching the ground tangentially).
Of course,  in dimension two and higher,  the dynamics of the drops is much more complex since   topology changes  seem unavoidable (splitting and merging of droplets).

Finally, let us mention that one of main difficulty in studying this problem (regardless of the dimension)
is the lack of comparison  principle due to the volume constraint (no viscosity solutions), since $\lambda$ depends on $t$ in \eqref{eq:motion3d-1}. 
And there is no way to get rid of this volume constraint, especially when addressing the long-time behavior of the drops. Indeed if we were to fix $\lambda(t)=\lambda_0$ and solve \eqref{eq:motion3d-1}-(\ref{eq:vel0}), it is easy to check that in most cases the drop would simply  vanish in finite time.

\vspace{10pt}

\subsection{Gradient flow structure} 
As mentioned above, the system of equation \eqref{eq:motion3d-1}-\eqref{eq:motion3d-1b} has a natural gradient flow structure:
For a given function $v$, we denote by
$$ \F(v) = \int_{\RR^2} \frac{1}{2} |\na v|^2 + \frac{v^2}{2} \kappa \cos\alpha -v \, x\kappa \sin\alpha\, dx\, dy $$
the energy of the corresponding drop, and for a given Caccioppoli set $W\subset \RR^2$, we  define
\begin{equation}\label{eq:uom} 
u_W = \mbox{argmin}\{\F(v)\, ;\, v\in H^1_0(W), \;  v\geq 0 ,\; \int v = V_0 \}.
\end{equation}
When $\alpha=0$, it is easy to show that the solution of (\ref{eq:motion3d-1}) with support $W$ and volume $V_0$ is nonnegative in  $W$. 
When $\alpha>0$, this may not be the case for large set $W$. This is the reason why we need the additional obstacle condition $v\geq 0$ in \eqref{eq:uom}.
As a consequence, we may not have  $\mbox{supp }u_W = W$, 
which is the main difference with the case $\alpha=0$.

We now define the energy of a  Caccioppoli set $W$ as:
$$ \E(W) = \F(u_W) + \int_W \beta \, dx\, dy$$
Formally, (\ref{eq:motion3d-1})-(\ref{eq:motion3d-1b}) is the gradient flow for the energy $\E$  over the set of Caccioppoli sets with respect to the usual Riemannian structure.
Indeed, let us consider a solution $W(t)$ of the gradient flow
$$ \pa_t W = -\mbox{diff } \E(W)$$
and denote by  $\widetilde V$ the normal velocity of $W$.
We can show that $\widetilde V$ satisfies  (\ref{eq:motion3d-1b})  as long as $W=\mbox{supp} u_W$:
Let $\tilde v$ be a  normal velocity field  applied to $\partial W$ and let  $\delta \tilde{u}$ be the variation of $u_W$ induced by $\tilde{v}$. Assuming that 
$|Du_W|\neq 0 \hbox{ on } \pa W$,
we get
$$
\begin{array}{ll}
\ds diff \E(W)(\tilde{v})& =\ds  
\int_W Du_W\cdot D\delta\tilde{u}+  (u_W \kappa \cos\alpha -  x\kappa \sin\alpha)\delta\tilde{u} \, dx +  \int_{\partial W}\left[\beta + \frac{1}{2}|D\tilde{u}_W|^2\right] \tilde{v} \, dS\\ [10pt]
&= \ds \int_W \big[-\Delta  u_W +u_W \kappa \cos\alpha -  x\kappa \sin\alpha\big]\delta\tilde{u}\, dx + \int_{\partial W} \left[-|Du_W|\delta\tilde{u}+ \left(\beta +\frac{1}{2} |D\tilde{u}_W|^2\right)\tilde{v} \right]\, dS\\ [10pt]
&= \ds \lambda \int_W \delta\tilde{u}\,  dx + \int_{\partial W}  \left[-|Du_W|^2+ \left(\beta +\frac{1}{2} |D\tilde{u}_W|^2\right) \right] \tilde{v} \,dS \\[10pt]
&=\ds  \int_{\partial W} \left(\beta-\frac{1}{2}|Du_W|^2\right) \tilde{v}\, dS,
 \end{array}
$$
where in the first equality we used the fact that $u_W=0$ on $\partial W$,  and the third inequality is a consequence of the following expansion:
\begin{equation}\label{assumption}
\delta\tilde{u}  = \tilde{v}|Du_W| +o(\tilde{v})\hbox{ on }\partial W.
\end{equation}
This implies
$$ \widetilde V=\frac{1}{2}|Du_W|^2-\beta.$$ 
However, as mentioned above, when $\alpha>0$, the solution of (\ref{eq:uom}) may have a support strictly smaller than $W$ when $W$ is large (because gravity is pushing $u$ "downward", in the direction of increasing $x$).
In that case, the obstacle free boundary condition yields $|\na u_W |=0$ 
along  $\pa\{u_W>0\} \setminus \pa W$ and we 
do not recover  (\ref{eq:motion3d-1b}).  
If we denote by $V$ the normal velocity of $\mbox{supp }u_W$, it is clear that whenever  $\mbox{supp }u_W \neq W$, we have $V \leq \widetilde V$.
Since this can only happen when $|\na u_W|=0$, we obtain the following equation  for the velocity $V$:
 \begin{equation} \label{eq:motion3d-2}
 \min\left\{-V+\frac{1}{2} |\na u_W | ^2 - \beta\, ,\,  |\na u_W|\right\}=0 \quad \mbox{ on } \pa\{u_W>0\}.
\end{equation}

\vspace{10pt}

In one dimension, one can rigorously show that the above  heuristics is valid, using the discrete-time (JKO) scheme introduced in \cite{GK} (see also \cite{JKO}):
For a given initial open interval $I^0$ and time step size $h>0$, let us consider the  sequence of open intervals $I^i_h$ with $i=1,2,3,...$, iteratively defined by
\begin{equation}\label{definition101}
I^{i+1}_h := \mbox{argmin}_{I: \mbox{open interval}}  \left\{ \frac{1}{h} \tilde{dist}^2 (I_h^i, I) +\E(I)\right\}, \quad I^0_h = I^0,
\end{equation}
where $\tilde{dist}$ is a modified ``distance"  between open intervals (see \cite{GK} for general formulation and references)
\begin{equation}\label{distance}
\tilde{dist}^2((a,b), (c,d)) = \int_{(a,c) \cup (b,d)} d(x, \{a,b\}) dx.
\end{equation} 

 Let $u_h(\cdot,t)$ and $\lambda_h(\cdot,t)$ be the associated function and lagrange multiplier  to $I^{i+1}_h$ defined at discrete times $t=ih$. Lastly, define 
 $$
 u_h(\cdot,t), \lambda_h(\cdot,t)  \equiv u_h (\cdot,ih), \lambda_h(\cdot,ih)\quad \hbox{ for } t=ih\leq t < (i+1)h.
 $$
 
Following \cite{GK},  we can then show:
\begin{proposition}\label{barrier}
Let $u_h$ and $I_h$ be as given above. 
\begin{itemize}
\item[(a)] Suppose there exists a classical solution $\phi$ of
$$
\left\{
\begin{array}{ll}
-\Delta \phi <\lambda_h(t) -\delta- \phi  \kappa  \cos\alpha + x  \kappa   \sin \alpha \quad & \mbox{ in } \{\phi>0\}\\[5pt]
V<\frac{1}{2} |\na \phi | ^2 -\beta-\delta & \mbox{ on } \pa\{\phi>0\}\cap\{|D\phi|\neq 0\}
\end{array}
\right.
$$
If $h$ is sufficiently small depending on $\delta$, then $\phi$ cannot cross $u_h$ from below.
\item[(b)]  Suppose there exists a classical solution $\phi$ of
$$
\left\{
\begin{array}{ll}
-\Delta \phi >\lambda_h(t) +\delta- \phi  \kappa  \cos\alpha + x  \kappa   \sin \alpha \quad & \mbox{ in } \{\phi>0\}\\[5pt]
V>\frac{1}{2} |\na \phi | ^2 -\beta-\delta & \mbox{ on } \pa\{\phi>0\}
\end{array}
\right.
$$
If $h$ is sufficiently small depending on $\delta$, then $\phi$ cannot cross $u_h$ from above.
\end{itemize}
\end{proposition}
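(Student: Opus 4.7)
The plan is to argue by contradiction, exploiting the variational characterization of $I^{i+1}_h$ as the (unique) minimizer, among open intervals, of
$$ J_i(I) := \frac{1}{h}\,\tilde{dist}^2(I^i_h, I) + \E(I). $$
I focus on part (a), since (b) is symmetric. The argument proceeds by induction on the time step index $i$: assuming $\phi(\cdot,ih) \leq u_h(\cdot,ih)$ together with $\{\phi(\cdot,ih)>0\} \subset I^i_h$, the goal is to propagate this ordering to $t=(i+1)h$.

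If the ordering were to fail, one of two things could happen. An \emph{interior} failure, where $\phi - u_h$ attains a positive interior maximum inside the common positive set, is ruled out by the maximum principle applied to \eqref{eq:motion3d-1} satisfied by $u_h(\cdot,(i+1)h)$ together with the strict sub-PDE satisfied by $\phi$; the slack $\delta$ absorbs the jump $|\lambda_h((i+1)h)-\lambda_h(ih)|$, which can be shown to be $o_h(1)$ from the discrete Euler-Lagrange equations and the fixed volume constraint.

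The more delicate \emph{free boundary} failure is handled as follows. Near a point $x_0 \in \partial I^{i+1}_h$ that $\partial\{\phi(\cdot,(i+1)h)>0\}$ threatens to overtake, one constructs a competitor $\tilde I$ by shifting the relevant endpoint of $I^{i+1}_h$ outward by a small amount $\eta$. Expanding
$$ J_i(\tilde I) - J_i(I^{i+1}_h) = \frac{1}{h}\bigl[\tilde{dist}^2(I^i_h,\tilde I) - \tilde{dist}^2(I^i_h, I^{i+1}_h)\bigr] + \bigl[\E(\tilde I)-\E(I^{i+1}_h)\bigr] $$
in $\eta$, the first-variation derivation leading to \eqref{eq:motion3d-1b} gives the energy bracket as $-\eta\bigl(\tfrac{1}{2}|Du_{I^{i+1}_h}(x_0)|^2 - \beta(x_0)\bigr) + O(\eta^2)$, while the distance bracket is $\tfrac{2\eta \ell}{h} + O(\eta^2)$ with $\ell$ the one-step endpoint displacement of $I^{i+1}_h$ relative to $I^i_h$. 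Minimality of $I^{i+1}_h$ then yields the discrete Euler-Lagrange balance $\ell/h = \tfrac{1}{2}|Du_h(x_0)|^2 - \beta(x_0) + o_h(1)$, i.e.\ the discrete normal velocity of $u_h$ agrees with \eqref{eq:motion3d-1b} up to an $o_h(1)$ error. Combining this with the strict subsolution inequality $V_\phi(x_0) < \tfrac{1}{2}|D\phi(x_0)|^2 - \beta(x_0) - \delta$, and with $|D\phi(x_0)| \leq |Du_h(x_0)|$ from Hopf's lemma applied to $u_h - \phi \geq 0$, contradicts the claimed crossing once $h$ is small enough that the $o_h(1)$ terms are dominated by $\delta$.

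The main obstacle will be making the $o_h(1)$ error terms quantitative and uniform across iterations, in particular controlling $|\lambda_h((i+1)h)-\lambda_h(ih)|$ and the higher-order terms in $\eta$ in the expansion of $\E$ and $\tilde{dist}^2$. The restriction $|D\phi|\neq 0$ in part (a) is essential: it ensures $\phi$ has a bona fide moving free boundary at $x_0$, so that the first-variation derivation matches the non-degenerate law \eqref{eq:motion3d-1b} rather than the obstacle-type \eqref{eq:vel0}. In part (b) no such restriction is needed, because degeneracy of the supersolution at its free boundary is already compatible with the obstacle constraint $u \geq 0$ in \eqref{eq:uom}.
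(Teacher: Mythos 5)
A preliminary remark on the comparison: the paper does not actually prove Proposition \ref{barrier} --- it states only that the proof ``follows that of Proposition 3.1 and Proposition 3.3 in \cite{GK}'', recording in addition the fact that \eqref{definition101} forces the endpoints of $I^i_h$ to move by at most $Ch$ per step. Your overall strategy --- contradiction at a first crossing, split into an interior failure (maximum principle plus the $\delta$-slack absorbing the fluctuation of $\lambda_h$) and a free-boundary failure (one-parameter competitors in the minimization \eqref{definition101}) --- is exactly the kind of argument the cited propositions carry out, so the skeleton is right.

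There is, however, a genuine gap in your free-boundary step. You assert that minimality of $I^{i+1}_h$ yields the two-sided balance $\ell/h = \frac{1}{2}|Du_h(x_0)|^2-\beta(x_0)+o_h(1)$. This is not available: because of the obstacle constraint $v\geq 0$ in \eqref{eq:uom}, the support of $u_{I^{i+1}_h}$ may be strictly contained in $I^{i+1}_h$, so the endpoint your competitor perturbs (which is what enters $\tilde{dist}$) need not be a point of $\pa\{u_h>0\}$ (which is what the barrier touches), and $|Du_h|=0$ at the latter. One-sided perturbations give only one-sided inequalities: pushing the endpoint outward shows $J_i$ would strictly decrease if $\ell/h<\frac{1}{2}|Du_{I^{i+1}_h}|^2-\beta-\e$, hence the lower bound on the discrete velocity, which is the direction needed for part (a); the reverse inequality, needed for part (b), is precisely where the degeneracy enters and produces the $\min$ structure of \eqref{eq:motion3d-2} rather than \eqref{eq:motion3d-1b}. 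You must identify, separately for (a) and (b), which inequality is required at the touching point and verify that the corresponding one-sided variation supplies it. Relatedly, your explanation of the restriction to $\{|D\phi|\neq 0\}$ in (a) is backwards: that restriction \emph{weakens} the hypothesis (no velocity bound on $\phi$ is assumed at degenerate points of $\pa\{\phi>0\}$), so your argument must still exclude a crossing there by another mechanism (e.g.\ Hopf's lemma giving $|Du_h|\geq|D\phi|$ at the contact point), rather than invoke a condition that is not assumed. Minor points: the distance bracket should be $\eta\ell/h+O(\eta^2)/h$, not $2\eta\ell/h$ (from \eqref{distance}, the derivative of $\int_0^\ell s\,ds$ is $\ell$, and one must take $\eta\ll h$ for the error to be negligible); and the $o_h(1)$ control of $\lambda_h((i+1)h)-\lambda_h(ih)$, which you leave as an assertion, does follow here from the $Ch$ endpoint displacement combined with the Lipschitz dependence of $\lambda$ on the length of the support (cf.\ \eqref{claim1}), but it needs to be said.
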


The proof of Proposition \ref{barrier} follows that of Proposition 3.1 and Proposition 3.3 in \cite{GK}.  Note that from \eqref{definition101} it follows that $I_h^i$ cannot move its endpoints by more than $Ch$ from the endpoints of $I_h^{i-1}$, where $C$ depends on the energy associated with $I^0$. It follows that  along a subsequence $u_h, \lambda_h$ and $I_h$ locally uniformly converge as $h\to 0$. Proposition~\ref{barrier} then ensures that the limiting solution satisfies \eqref{eq:uom}  and that the motion law satisfies \eqref{eq:motion3d-2} in the viscosity sense. 

\vspace{10pt}

This discrete-time scheme, which relies on the gradient-flow structure of our problem, can thus be used to prove the existence of solution.
However, it is not a very practical scheme, especially in one dimension. For this reason we will use a different, simpler, discrete-time scheme to construct classical solutions of \eqref{eq:uom}-\eqref{eq:motion3d-2} (see Section~\ref{sec:dt}).

\vspace{10pt}

In the next section, we will state all the results proved in this paper. The rest of the paper (Sections \ref{sec:prel} to \ref{sec:hom}) will be devoted to the proof of these results.

\vspace{10pt}

\section{Main results and outline of the paper}
Throughout the paper, the volume of the drop $V_0$ is fixed (we only compare drops with same volume), and we assume that the drop is invariant in the $y$ direction.
As discuss in the introduction, the sliding drop problem then reduces to finding a function $u(x,t)$ and a set $W(t)=\{u(t)>0\}$ such that
\begin{equation}\label{eq:P} 
\left\{
\begin{array}{l}
\displaystyle u(t) = \mbox{argmin}\{\F(v)\, ;\, v\in H^1_0(W(t)), \;  v\geq 0 ,\; \int v \, dx= V_0 \}\\[5pt]
\displaystyle \min\left\{-V+\frac{1}{2} | u_x | ^2 - \beta\, ,\,  |u_x |\right\}=0 \quad \mbox{ on } \pa\{u>0\}.
\end{array}
\right.
\end{equation}
where $V$ denotes the normal velocity of $\pa W(t)$ and   $\F$ is given by
$$
\F(v) = \int_{\RR} \frac{1}{2} |v_x|^2 + \frac{v^2}{2} \kappa \cos\alpha -v \, x\kappa \sin\alpha\, dx. $$

We note that in one dimension there is no mechanism which would allow a connected drop to split into several droplets (we will show this rigorously in  Section 4, when we construct a solution using a time-discretized scheme).
We thus assume that the drop is connected at time $t=0$, that is $u_0(x)$ satisfies $\{u_0>0\}=(a_0,b_0)$ and 
\begin{equation}\label{eq:init}
u_0 = \mbox{argmin}\{\F(v)\, ;\, v\in H^1_0(a_0,b_0), \;  v\geq 0 ,\; \int v \, dx= V_0 \}.
\end{equation}
Our problem then reduces to finding $W=(a(t),b(t))$ satisfying \eqref{eq:P}.
This leads to the following definition:
\begin{definition}\label{def:1}
Given an  initial data $u_0$ satisfying  \eqref{eq:init}, 
we say that $u(x,t)$ is a solution of  (\ref{eq:P}) if there exist some Lipschitz functions $a(t)$ and $b(t)$ defined for $t\geq 0$,
satisfying
$$ a(0)=a_0,\quad b(0)=b_0$$
and such that
\begin{equation}\label{eq:motionsupp}
  \{x\,;\, u(x,t)>0\} = (a(t),b(t))\quad \mbox{ for all $t\geq 0$,}
\end{equation}
\begin{equation}\label{eq:motionob}
u(\cdot,t)=\mathrm{argmin} \{\F(v)\,;\, v\in H^1_0(a(t),b(t)),\; v\geq 0 ,\; \int_{a(t)}^{b(t)} v(x)\, dx = V_0 \}  \mbox{ for all $t\geq 0$,}
\end{equation}
and 
$$
\left\{
\begin{array}{ll}
\min\{ a'(t) +  \frac{1}{2} |u_x(a(t)) | ^2 - \beta(a(t)), |u_x(a(t))|\} =0 \\[5pt]
\min\{ -b'(t) + \frac{1}{2} |u_x(b(t)) | ^2 - \beta(b(t)) ,|u_x(b(t))|\} = 0
\end{array}
\right.
\quad \mbox{ a.e. $t\geq 0$.}
$$
\end{definition}

With this definition in hand, our first result is the following:
\begin{theorem}\label{thm:ex}
Assume that $x\mapsto \beta(x)$ is a Lipschitz continuous function.
For any $u_0$ satisfying  \eqref{eq:init}, there exists a unique solution $u(x,t)$ in the sense of  Definition \ref{def:1}.
\end{theorem}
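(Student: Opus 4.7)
I would prove existence via a forward--Euler time discretization adapted to the obstacle structure and compactness, then establish uniqueness by reducing the motion law to an ODE system with Lipschitz right-hand side and applying Gronwall.

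\textbf{Discrete scheme and a priori bounds.} Fix a step size $h>0$ and set $(a^0,b^0)=(a_0,b_0)$. Given $(a^i,b^i)$ (all $h$-dependent), let $u^i$ be the unique minimizer of the obstacle problem with volume constraint $V_0$ on $(a^i,b^i)$; in one dimension this is an ODE with obstacle whose solution has well-defined one-sided derivatives at the endpoints. Update
$$
a^{i+1}=a^i+h\bigl(\beta(a^i)-\tfrac12|u^i_x(a^i)|^2\bigr),\qquad
b^{i+1}=b^i+h\bigl(\tfrac12|u^i_x(b^i)|^2-\beta(b^i)\bigr),
$$
with a correction when $|u^i_x|=0$ at an endpoint (obstacle regime): in that case the endpoint is reset to match the actual endpoint of the support of the minimizer. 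An energy identity of the form $\E(I^{i+1})\leq \E(I^i)+O(h^2)$, together with the obstacle problem structure and the volume constraint, bounds $|u^i_x|$ at the endpoints uniformly in $h$ and $i$, hence yields uniform Lipschitz bounds on the piecewise-constant interpolants $a_h(t),b_h(t)$ on any compact time interval.

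\textbf{Passing to the limit.} By Arzel\`a--Ascoli, a subsequence of $(a_h,b_h)$ converges uniformly on compact time intervals to Lipschitz functions $(a,b)$. Continuous dependence of the one-dimensional obstacle problem on its domain (which is a convex minimization over $H^1_0$ with unilateral obstacle and linear constraint) yields $u_h(\cdot,t)\to u(\cdot,t)$ in $H^1$ uniformly in $t$, where $u(\cdot,t)$ is the minimizer on $(a(t),b(t))$, and the endpoint derivatives converge accordingly. Using the Lipschitz continuity of $\beta$, one passes to the limit in the Euler update to recover the motion law in the sense of Definition~\ref{def:1}, which establishes existence.

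\textbf{Uniqueness and the main obstacle.} Viewing $(a,b)\mapsto u(\cdot;a,b)$ as the map sending an interval to the minimizer of the obstacle problem, differentiation of the ODE and the volume constraint with respect to the endpoints shows that $(a,b)\mapsto(|u_x(a)|,|u_x(b)|)$ is locally Lipschitz in the nondegenerate regime. The motion law then becomes an ODE system for $(a,b)$ with a Lipschitz right-hand side, and for two solutions with the same initial data a Gronwall argument applied to $|a_1-a_2|^2+|b_1-b_2|^2$ gives uniqueness of the endpoints, from which uniqueness of $u$ follows through the obstacle problem. The subtlest point, and the expected main obstacle, is the degenerate regime $|u_x|=0$ at an endpoint: here the motion law is only a one-sided inequality, and one must replace the Lipschitz ODE argument by a comparison using monotonicity of the contact set of the obstacle problem in the domain, and carefully handle transitions between the nondegenerate and degenerate regimes.
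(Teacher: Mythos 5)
Your existence argument is essentially the paper's: the same forward--Euler scheme with the half-step update and the reset of the left endpoint to the boundary of the support of the obstacle-problem minimizer, followed by uniform Lipschitz bounds and Arzel\`a--Ascoli. One caveat on the a priori bounds: the paper does not derive the endpoint-gradient bound from an energy identity $\E(I^{i+1})\leq \E(I^i)+O(h^2)$ (whose implicit constant would itself depend on the endpoint gradients you are trying to bound, so this step is at best circular as stated). Instead it uses that the endpoint slopes depend on $(a,b)$ only through the length $\ell=b-a$, via monotone decreasing functions $G(\ell)=\frac12|u_x(a)|^2$ and $H(\ell)=\frac12|u_x(b)|^2$ with $H-G=\kappa V_0\sin\alpha$; hence whenever $|u_x(b)|\geq 2\sup\sqrt{\beta}$ the interval lengthens at the next step and $|u_x(b)|$ decreases. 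This gives the uniform bound and the lower bound on $\ell$ directly.

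The genuine gap is in uniqueness. You propose Gronwall on $|a_1-a_2|^2+|b_1-b_2|^2$ using Lipschitz dependence of the endpoint gradients, and you correctly identify the degenerate regime $u_x(a)=0$ (where the motion law is only a one-sided relation) as the main obstacle --- but that is exactly the case that must be resolved, and ``comparison using monotonicity of the contact set'' plus ``carefully handle transitions'' is a placeholder, not an argument. The paper's resolution is the reformulation of Definition \ref{def:3}: the degeneracy $u_x(a)=0$ is equivalent to $\ell=\ell_c$ for a critical length $\ell_c$, so the motion law becomes $\min\{a'+G(\ell)-\beta(a),\,\ell_c-\ell\}=0$ and $b'=H(\ell)-\beta(b)$. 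One then applies Gronwall not to the sum of squares but to $f(t)=\max\{a_1-a_2,\,b_1-b_2\}$: when the max is attained by $a_1-a_2$ one has $\ell_2>\ell_1$, so $\ell_1<\ell_c$ forces the nondegenerate branch for $a_1'$ while $a_2'\geq -G(\ell_2)+\beta(a_2)$ always holds, and the monotonicity of $G$ makes $G(\ell_2)-G(\ell_1)\leq 0$; symmetrically for $b$ with $H$ when $\ell_2\leq\ell_1$. Only the $\beta$-difference survives, yielding $f'\leq \|\beta'\|_\infty |f|$ with no case analysis on regime transitions, and in fact the full comparison principle. Without this choice of functional (or an equivalent device exploiting the monotonicity of $G$ and $H$ rather than their Lipschitz constants), your uniqueness proof does not close.
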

Furthermore, we have the following ``comparison principle", which plays a crucial role in our analysis:
\begin{proposition}\label{prop:cp}[Comparison principle and uniqueness]
Assume that $x\mapsto \beta(x)$ is a Lipschitz function, and let $u_1$, $u_2$ be two solutions of  (\ref{eq:P}) (in the sense of  Definition \ref{def:1}) with support
$(a_1(t),b_1(t))$, $(a_2(t),b_2(t))$ such that
\begin{equation}\label{eq:compi} 
a_1(0)\leq a_2(0) \; \mbox{ and }\; b_1(0) \leq b_2(0).
\end{equation}
Then
\begin{equation}\label{eq:compf} 
a_1(t)\leq a_2(t)\;  \mbox{ and } \; b_1(t) \leq b_2(t) \quad \mbox{ for all $t>0$.}
\end{equation}
Furthermore if the inequalities are strict in \eqref{eq:compi}, then they are strict also in \eqref{eq:compf}.
\end{proposition}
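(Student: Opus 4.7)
My plan is to prove the comparison by a first-contact argument and deduce uniqueness by applying comparison with equal initial data in both directions. Assume the conclusion fails and set $t_0 = \inf\{t \geq 0 : a_1(t) > a_2(t) \text{ or } b_1(t) > b_2(t)\}$. Since $a_i, b_i$ are Lipschitz (Definition~\ref{def:1}), $t_0 > 0$ when \eqref{eq:compi} is strict, and by continuity the ordering survives at $t_0$ with at least one of $a_1(t_0) = a_2(t_0)$ or $b_1(t_0) = b_2(t_0)$ holding. The strategy is to compare $a_i'(t_0)$ (resp.\ $b_i'(t_0)$) at the contact point via the velocity law and a strict monotonicity of the endpoint slope of $u$ with respect to its support.

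The main technical input is a \emph{strict monotonicity lemma} for the constrained minimizer $u_{(a,b)}$ of \eqref{eq:motionob}: for fixed $V_0$, if $(a_1,b_1) \subsetneq (a_2,b_2)$ share their left endpoint $a_1 = a_2 = a$, then $|\partial_x u_{(a_1,b_1)}(a)| > |\partial_x u_{(a_2,b_2)}(a)|$, and symmetrically at the right endpoint. In the non-degenerate regime where the minimizer is strictly positive on the open support, the Euler--Lagrange equation $-v'' + \kappa \cos\alpha \cdot v = \lambda + \kappa \sin\alpha \cdot x$ is linear, so one can decompose $u_{(a,b)} = \lambda\, \psi_1 + \kappa\sin\alpha\, \psi_2$ in terms of the solutions on $(a,b)$ of $-\psi_1'' + \kappa\cos\alpha\, \psi_1 = 1$ and $-\psi_2'' + \kappa\cos\alpha\, \psi_2 = x$ with zero boundary data, use the volume constraint to solve for $\lambda(a,b)$ explicitly, and compute $\partial_b |\partial_x u(a)|$: the negative sign comes from the fact that enlarging $(a,b)$ strictly increases $\int \psi_1$ and hence, at fixed volume $V_0$, strictly decreases both $\lambda$ and the slope at $a$.

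With the lemma in hand, the contradiction at $t_0$ proceeds as follows. Suppose $a_1(t_0) = a_2(t_0) = a_0$ with $b_1(t_0) < b_2(t_0)$, and first consider the non-degenerate case $|u_{i,x}(a_0)| > 0$. The lemma yields $|u_{1,x}(a_0)| > |u_{2,x}(a_0)|$, so the velocity law gives
$$a_1'(t_0) = \beta(a_0) - \tfrac{1}{2}|u_{1,x}(a_0)|^2 < \beta(a_0) - \tfrac{1}{2}|u_{2,x}(a_0)|^2 = a_2'(t_0),$$
contradicting $(a_1 - a_2)(t) > 0$ for $t$ just past $t_0$. Contact at the right endpoint is symmetric. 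When both endpoints coincide at $t_0$, we have $u_1(\cdot,t_0) = u_2(\cdot,t_0)$, and Lipschitz dependence of $(a',b')$ on $(a,b)$ in the non-degenerate regime gives forward uniqueness of the endpoint ODE, forcing $u_1 \equiv u_2$ past $t_0$ and contradicting failure. In the degenerate regime $|u_{i,x}(a_0)| = 0$ at the contact endpoint, the $\min$-form of the velocity law gives $a_i'(t_0) \geq \beta(a_0)$ (resp.\ $b_i'(t_0) \leq -\beta(b_0)$), and the monotonicity lemma, now applied on the effective support $\{u_{(a,b)} > 0\}$ where the obstacle condition provides $u_x = 0$ at any inner free boundary, still rules out the crossing. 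Strict preservation of strict inequalities is a by-product of this argument.

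The main obstacle is the strict monotonicity lemma in the degenerate case, where $\{u_{(a,b)} > 0\}$ may be a proper subset of $(a,b)$ and perturbing $b$ can nontrivially shift the interior contact region with the obstacle $\{v \geq 0\}$. I would handle this by reducing to the previous non-degenerate comparison on the true positivity set, treated as the effective support, and verifying that the reduction is consistent with the definition of $u_{(a,b)}$ through \eqref{eq:motionob}. A secondary subtlety is that $a_i, b_i$ are only Lipschitz, so the velocity identities hold only almost everywhere; one bypasses this by turning the pointwise velocity comparison into an integrated inequality on a short interval $(t_0, t_0+\delta)$, using the strict sign from the monotonicity lemma together with the continuity of $u_{i,x}$ at the endpoints in $(a,b)$.
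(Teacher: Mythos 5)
Your route is genuinely different from the paper's. The paper first passes to the reformulation of Definition \ref{def:3} (justified by Proposition \ref{prop:eq}), so that the velocity law reads $\min\{a'+G(\ell)-\beta(a),\ell_c-\ell\}=0$, $b'=H(\ell)-\beta(b)$ with $G,H$ non\emph{-}increasing functions of the length $\ell=b-a$ alone, and then runs a Gronwall argument on $f(t)=\max\{a_1(t)-a_2(t),\,b_1(t)-b_2(t)\}$: in each of the two cases $\ell_2>\ell_1$ and $\ell_2\le\ell_1$ the monotone terms $G$ (resp.\ $H$) contribute with a favorable sign and only the $\beta$-differences survive, giving $f'\le K|f|$ with $K=\|\beta'\|_{L^\infty}$; the strict statement follows from the reverse inequality $f_-'\ge -Kf_-$. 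This needs only \emph{non-strict} monotonicity of the endpoint slopes and disposes of the constrained regime $\ell=\ell_c$, the degenerate left endpoint, and the case where both endpoints touch simultaneously, all in one computation. Your first-contact argument is more geometric but pays for it: it requires \emph{strict} monotonicity of the slopes (which the paper does prove, as Lemma \ref{lem:ndgH} combined with \eqref{eq:GHV}, though for a different purpose) and a separate case analysis.

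Three of your steps are underpowered as written. (1) The coincidence case $a_1(t_0)=a_2(t_0)$, $b_1(t_0)=b_2(t_0)$: you invoke forward uniqueness from Lipschitz dependence ``in the non-degenerate regime,'' but if $\ell(t_0)=\ell_c$ the endpoint dynamics is a constrained $\min$-type system rather than a classical Lipschitz ODE, and forward uniqueness there is essentially the statement being proved; as written this step is unproved and borders on circular. (2) In the degenerate case you record $a_i'(t_0)\ge\beta(a_0)$ for both drops, which does not order $a_1'$ against $a_2'$. The correct observation is that at a left-endpoint contact with $b_1(t_0)<b_2(t_0)$ one has $\ell_1<\ell_2\le\ell_c$, so drop $1$ cannot be degenerate there by Proposition \ref{prop:obstacle}(iv), whence $a_1'=\beta(a_0)-G(\ell_1)<\beta(a_0)\le a_2'$; also note the right endpoint is never degenerate (Lemma \ref{lem:pos}), so that branch is vacuous. (3) Your sketch of the strict monotonicity lemma via the decomposition $u=\lambda\psi_1+\kappa\sin\alpha\,\psi_2$ is only heuristic: the sign of the $b$-derivative of $u'(a)$ involves the competition of three $b$-dependent quantities, and in the regime where the obstacle is active the contact set moves with $b$; the quantitative statement you need is precisely Lemma \ref{lem:ndgH}, whose proof is a nontrivial barrier argument. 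With these repairs your argument closes, but the paper's Gronwall device on $\max\{a_1-a_2,b_1-b_2\}$ avoids every one of these difficulties.
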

We stress out the fact that  the solutions $u_1$ and $u_2$ correspond to drops with the same volume $V_0$ (the result would obviously not hold for two drops with different volume).
The result is not trivial because $u_1$ and $u_2$ do not satisfy the same equation (different Lagrange multipliers).

\vspace{10pt}

Next, we investigate the existence of particular solutions characterizing the asymptotic behavior of general solutions.
We start with the case where $\beta$ is constant, and we show that there exists a unique traveling wave solution (for a given volume and up to translation in time), which is stable:
\begin{theorem}\label{thm:TW}
When $\beta$ is constant, Problem  (\ref{eq:P}) has a  traveling wave type solution of the form $u(x,t)=v(x-ct)$. The profile $v(x)$ is unique (up to translation), and the speed $c$ is given by 
\begin{equation}\label{eq:vitesse}
c=\left\{
\begin{array}{ll}
\frac{1}{2} V_0 \kappa \sin\alpha & \mbox{ if }V_0 \leq 2\frac{\beta}{\kappa \sin\alpha}\\[5pt]
V_0\kappa \sin\alpha - \beta & \mbox{ if }V_0 \geq  2\frac{\beta}{\kappa \sin\alpha}.
\end{array}
\right.
\end{equation}
Finally, any solution of  (\ref{eq:P}) converges as $t\to\infty$ to a translation of this traveling wave $u$.
\end{theorem}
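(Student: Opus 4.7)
I would split the proof into three blocks: (i) existence and explicit form of the traveling wave; (ii) uniqueness of the profile; (iii) long-time convergence.

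For existence, I would plug the ansatz $u(x,t)=v(x-ct)$ into \eqref{eq:P}. The support must move rigidly, so $a'(t)=b'(t)=c$, and on its support $(\alpha,\beta)$ the profile $v$ solves
\begin{equation*}
-v'' = \lambda - v\kappa\cos\alpha + x\,\kappa\sin\alpha,\qquad v(\alpha)=v(\beta)=0,\qquad \int_\alpha^\beta v\,dx = V_0,
\end{equation*}
together with the free-boundary conditions
\begin{equation*}
\min\bigl\{c+\tfrac12 v_x(\alpha)^2-\beta_0,\;|v_x(\alpha)|\bigr\}=0,\qquad \min\bigl\{-c+\tfrac12 v_x(\beta)^2-\beta_0,\;|v_x(\beta)|\bigr\}=0,
\end{equation*}
where I temporarily write $\beta_0$ for the (constant) $\beta$. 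Since $c>0$, the front condition forces $|v_x(\beta)|^2=2(\beta_0+c)$. At the rear there are two regimes: either (A) $|v_x(\alpha)|>0$ with $|v_x(\alpha)|^2=2(\beta_0-c)$, or (B) $|v_x(\alpha)|=0$ (tangential touching). The fundamental balance is obtained by the classical Pohozaev-type trick: multiply the ODE by $v_x$, integrate over $(\alpha,\beta)$ and use $v(\alpha)=v(\beta)=0$ together with $\int x v_x = -V_0$. This yields
\begin{equation*}
\tfrac12 v_x(\beta)^2 - \tfrac12 v_x(\alpha)^2 = V_0\,\kappa\sin\alpha.
\end{equation*}
Substituting the boundary values in regime (A) gives $2c=V_0\kappa\sin\alpha$, which is consistent precisely when $c\le \beta_0$, i.e.\ $V_0\le 2\beta_0/(\kappa\sin\alpha)$. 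In regime (B) the identity gives $v_x(\beta)^2=2V_0\kappa\sin\alpha$, hence $c=V_0\kappa\sin\alpha-\beta_0$, which is consistent when $c\ge\beta_0$. This produces exactly the dichotomy \eqref{eq:vitesse}. To actually produce a profile (not just its speed), I would treat it as a two-parameter shooting problem for $(\lambda,\beta-\alpha)$: prescribe the left contact angle ($0$ or $\sqrt{2(\beta_0-c)}$), solve the linear ODE on $(\alpha,\beta)$, and adjust $\lambda$ and the support length so that $v(\beta)=0$, $\int v=V_0$, and the right contact angle matches. A monotonicity/continuation argument in $\lambda$ handles solvability.

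For uniqueness of the profile, I would observe that once $c$ is fixed by \eqref{eq:vitesse}, the ODE together with \emph{all} boundary data (two Dirichlet conditions, two prescribed slopes, and the volume constraint) is overdetermined in a rigid way: the compatibility already used above pins $v_x(\alpha)$ and $v_x(\beta)$ from $c$, so the profile is the unique positive solution of a second-order linear ODE with prescribed values and derivatives at one endpoint, and the support length is then uniquely determined by the remaining conditions. In regime (B) the same argument works with the degenerate slope at $\alpha$.

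The main effort is the convergence statement, which I would handle through the comparison principle, Proposition \ref{prop:cp}. Let $u_{TW}(x,t)=v(x-ct)$ with support $(\alpha+ct,\beta+ct)$, and for $L\in\RR$ let $u^L_{TW}(x,t)=v(x-ct-L)$. Given a solution $u$ with support $(a(t),b(t))$, set
\begin{equation*}
L_-(t)=\min\bigl(a(t)-ct-\alpha,\,b(t)-ct-\beta\bigr),\qquad L_+(t)=\max\bigl(a(t)-ct-\alpha,\,b(t)-ct-\beta\bigr).
\end{equation*}
By Proposition \ref{prop:cp}, $L_-(t)$ is non-decreasing and $L_+(t)$ is non-increasing in $t$, so both have limits $L_\infty^\pm$. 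The heart of the argument is to show $L_\infty^-=L_\infty^+$. I would do this by contradiction: if $L_\infty^-<L_\infty^+$, extract an $\omega$-limit $\bar u$ of $u(\cdot+ct,t)$ (compactness follows from uniform bounds on the support length and the energy $\E$, which is a Lyapunov functional). This $\bar u$ is itself a traveling wave solution (in the moving frame it is stationary for the dynamics obtained from the gradient flow of $\E$ minus the work done by the drift), sandwiched strictly between two distinct translates of $v$. The strict version of Proposition \ref{prop:cp} would then be violated, forcing $L_\infty^-=L_\infty^+$ and hence $u(x,t)-v(x-ct-L_\infty)\to 0$. The hardest step will be making the $\omega$-limit argument rigorous in regime (B): there the rear of the drop is degenerate and the comparison is not strict, so I would likely need to first show that any $\omega$-limit of the front endpoint obeys the traveling wave ODE, then recover the rear endpoint and the full profile from the volume constraint and the uniqueness established in (ii).
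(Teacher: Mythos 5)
Your derivation of the speed \eqref{eq:vitesse} is exactly the paper's: the identity $\tfrac12 v_x(\beta)^2-\tfrac12 v_x(\alpha)^2=V_0\kappa\sin\alpha$ is precisely \eqref{eq:GHV}, and the two regimes (nondegenerate vs.\ degenerate rear slope) are matched to the two branches in the same way. Where you genuinely diverge from the paper is in how you produce the profile and, above all, in the convergence proof. The paper exploits the fact that for constant $\beta$ the length $\ell(t)=b(t)-a(t)$ decouples and satisfies the scalar autonomous equation \eqref{eq:ellev}, $\max\{\ell'-[F(\ell)-2\beta],\,\ell-\ell_c\}=0$ with $F=G+H$ strictly decreasing (Proposition \ref{prop:obstacle} and \eqref{eq:Fdef}); existence and uniqueness of the traveling wave then amount to the scalar equation $\max\{-(F(\ell_0)-2\beta),\ell_0-\ell_c\}=0$ having a unique root, and convergence is the one-line statement that $\ell_0$ is the unique globally attracting equilibrium of a monotone scalar ODE. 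Your shooting argument in $(\lambda,\beta-\alpha)$ is re-deriving the monotonicity that the paper already packaged into $G$, $H$, $F$, and your barrier/$\omega$-limit scheme is doing, with front-propagation machinery, exactly the job of that one line: note that $L_+(t)-L_-(t)=|\ell(t)-(\beta-\alpha)|$, so proving $L_\infty^-=L_\infty^+$ \emph{is} proving $\ell(t)\to\ell_0$. One genuine advantage of your route is that the monotone bracketing by $L_\pm$ gives convergence to a \emph{specific} translate for free, a point the paper passes over quickly (it would need the exponential rate from Lemma \ref{lem:ndgH} to conclude that $b(t)-ct$ converges).

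Two soft spots in your convergence block should be fixed. First, the contradiction as stated does not close: a traveling wave sandwiched between two distinct translates of $v$ violates nothing. The correct ending is that, because $L_\pm$ are monotone and convergent, they are \emph{constant} along any $\omega$-limit; hence $a-ct$ and $b-ct$ are constant for the limit solution, which is therefore a traveling wave, so its length is $\ell_0$ and $L_\infty^-=L_\infty^+$. Second, the appeal to $\E$ as a Lyapunov functional "minus the work done by the drift" is a red herring and problematic as written: $\E$ contains the term $-\int v\,x\kappa\sin\alpha$, is not translation invariant, and is unbounded below along a sliding drop, so you would have to construct a renormalized functional in the moving frame. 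You do not need it: compactness follows from the uniform Lipschitz bounds on $a,b$ and $\underline\ell\le\ell\le\ell_c$, and closedness of the solution set under locally uniform limits follows from continuity of $G$ and $H$. With those repairs the argument works, but it is considerably heavier than the paper's reduction to the scalar equation for $\ell$.
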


Note that the speed $c$ of the traveling wave is a piecewise linear function of $V_0$, which increases faster once the drop reaches a critical volume (we will see that this correspond to the volume for which the receding contact angle - or the gradient in the rear of the drop - becomes degenerate). Note also that for small volume, the speed is independent of the relative adhesion coefficient $\beta$.  

\vspace{10pt}

We emphasize the fact that when $\beta$ is constant, one can easily show that any drop will be sliding down the inclined plane (no stationary solutions can exist in that case). 
However, the coefficient $\beta$  depends on the properties of the materials (solid and liquid) and  it is very sensitive to small perturbations in the properties of the solid plane (chemical contamination or roughness). 
In the last part of this paper, we thus consider the case of non-constant coefficient $\beta$, and to simplify the analysis, we restrict ourself to periodic settings.

Our goal is to characterize the effects of these heterogeneities on the speed of the sliding droplets.
To this end we consider a simple case in which the constant $\beta$  is replaced by a periodic function
$$ x\mapsto \beta(x).$$

In that case, we will see that stationary solutions do exist, at least for small volume, and that small enough drop may stick to the inclined plane. 
Large volume drops, however, will slide down, and the notion of traveling wave solution is replaced by that of pulsating traveling solutions (which are global in time solutions whose profile exhibits some periodic behavior).
More precisely, we will show:
\begin{theorem}\label{thm:PTF0}
Assume that $x\mapsto \beta(x)$ is a periodic function. Then the following hold:
\item[(i)]  If
$$
\max\beta - \min\beta < V_0\kappa\sin\alpha,
$$
then there exists a unique pulsating traveling solution $u(x,t)$  solution of  (\ref{eq:P}) (that is a global in time solution satisfying $u(x,t+T)=u(x+1,t)$ for all $t$).
Furthermore, any solution will eventually start sliding with positive speed ($b'(t)>0$ for $t\geq t_0$) and will converge to the unique pulsating traveling  solution as $t\to\infty$.
\item[(ii)] If
$$
\max\beta - \min\beta > V_0\kappa\sin\alpha,
$$
then there exists $\delta$ such that if the period of $\beta$ is less than $\delta$, then any solution will stick to the inclined plane. More precisely, we have
\begin{equation}\label{sticking}
\hbox{ either  }a(t)\geq a(0)- C\quad\hbox{ or }b(t)\geq b(0)- C\quad\hbox{ for all }t\geq 0.
\end{equation}
\end{theorem}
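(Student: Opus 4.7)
The plan is to handle parts (i) and (ii) separately, with the comparison principle of Proposition~\ref{prop:cp} serving as the main technical tool throughout.

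For part (i), I would construct the pulsating traveling solution via a Poincar\'e-type return map. Given an initial configuration $u_0$ with support $(a_0,b_0)$ satisfying \eqref{eq:init}, define $\tau(u_0)$ to be the first time $t$ at which $b(t)=b_0+1$ (when such a time exists), and set $S(u_0):=u(\cdot -1,\tau(u_0))$; a fixed point of $S$ is exactly the profile at $t=0$ of a pulsating traveling solution. The first task is to use the hypothesis $\max\beta-\min\beta<V_0\kappa\sin\alpha$ to guarantee that $\tau(u_0)$ is finite for a suitable class of $u_0$. The natural strategy is to exploit the constant-$\beta$ traveling waves of Theorem~\ref{thm:TW}: taken with $\beta$ replaced by the constant $\min\beta$ they yield a strict subsolution that forces the drop through each period, while taken with $\max\beta$ they bound the slowdown on each period. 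These two comparisons also show that iterates $S^n(u_0)$ remain trapped in a bounded set of support configurations.

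To produce a fixed point, I would note that $S$ is monotone for the partial order ``$a_1\le a_2$ and $b_1\le b_2$'' by Proposition~\ref{prop:cp}. Starting either from a subsolution-type configuration from below or from a supersolution-type configuration from above, iterates of $S$ form a monotone bounded sequence in the space of supports and therefore converge to a fixed point. Uniqueness follows from the strict part of Proposition~\ref{prop:cp}: given two pulsating traveling solutions, translating one by a real number so that it first touches the other yields a contradiction by strict comparison, unless the two profiles coincide. For the stability statement, any solution $u$ can be bracketed at $t=0$ between two $\mathbb{Z}$-translates $v^\pm$ of the pulsating traveling solution, and this bracket is preserved in time by Proposition~\ref{prop:cp}. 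Combining the periodic recurrence provided by the return map with strict comparison closes the gap between the enclosing translates and yields convergence to a single translate; the assertion $b'(t)>0$ for $t\ge t_0$ then follows from the strictly positive speed of the pulsating traveling solution.

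For part (ii), the plan is to construct a sufficiently dense family of stationary barriers and invoke the comparison principle to pin the drop. A stationary drop $\vphi$ with support $(p,q)$ must solve the one-dimensional version of \eqref{eq:motion3d-1} on $(p,q)$ under the volume constraint, together with the endpoint conditions $\tfrac12|\vphi_x(p)|^2\le\beta(p)$ and $\tfrac12|\vphi_x(q)|^2\le\beta(q)$ (with equality whenever the gradient is nonzero). In one dimension the profile is the explicit solution of a linear ODE parametrised by $(p,q)$ and the Lagrange multiplier $\lambda$; eliminating $\lambda$ via the volume constraint reduces existence of a stationary drop with left endpoint $p$ to a single scalar equation in $q$. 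A direct computation shows that the gradient imbalance $\tfrac12|\vphi_x(p)|^2-\tfrac12|\vphi_x(q)|^2$ across such a drop is of order $V_0\kappa\sin\alpha$ up to smooth contributions, so under the strict inequality $\max\beta-\min\beta>V_0\kappa\sin\alpha$ this imbalance can be compensated by variations of $\beta$ between the endpoints; by taking the period of $\beta$ small enough, such a matching is possible for $p$ lying within distance $C$ of any prescribed point. Placing such a stationary barrier just to the right of $b(0)$ and applying Proposition~\ref{prop:cp} prevents the front of the drop from crossing it, giving \eqref{sticking}.

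The main obstacle is the quantitative construction of stationary barriers in part (ii): one must produce stationary solutions whose supports are placed precisely relative to the evolving drop, which requires a careful study of the two-parameter ODE system for $\vphi$ coupled to the volume constraint, and in particular a sharp inspection of the gradient-imbalance identity to see the role of $V_0\kappa\sin\alpha$. In part (i), the delicate point is the convergence argument, where the monotone bracketing must be combined with the periodic recurrence provided by $S$ to rule out stalled, non-converging orbits and reduce everything to a strict-comparison step.
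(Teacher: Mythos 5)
There is a genuine gap in your part (i), at the very first step. You propose to force the drop through each period by comparing with the constant-coefficient traveling waves of Theorem~\ref{thm:TW} for $\beta\equiv\min\beta$ and $\beta\equiv\max\beta$. But these are neither sub- nor supersolutions of the heterogeneous problem in the partial order ``$a_1\le a_2$ and $b_1\le b_2$'' used in Proposition~\ref{prop:cp}, because $\beta$ enters the two endpoint velocity laws with opposite signs: $a'=\beta(a)-G(\ell)$ while $b'=H(\ell)-\beta(b)$. Replacing $\beta$ by $\min\beta$ slows the rear but \emph{speeds up} the front ($H(\ell)-\min\beta\ge H(\ell)-\beta(b)$), so the $\min\beta$-wave fails the subsolution inequality at $b$; symmetrically the $\max\beta$-wave fails the supersolution inequality at $b$. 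Consequently neither comparison goes through, and your return time $\tau(u_0)$ is not shown to be finite, so the Poincar\'e map $S$ is not even defined. The actual mechanism in the paper is different: the identity \eqref{eq:GHV}, $H(\ell)-G(\ell)=V_0\kappa\sin\alpha$, gives directly $a'+b'\ge V_0\kappa\sin\alpha+\beta(a)-\beta(b)>\delta$, and then a separate, non-trivial argument (Lemma~\ref{lem:b'}, which uses the non-degeneracy of $H$ from Lemma~\ref{lem:ndgH}) upgrades this to $b'(t)>\eta>0$ for $t\ge T_0$: if the front stalled, the rear would catch up, $\ell$ would shrink, and $H(\ell)$ would increase quantitatively, contradicting the stall. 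This step is the heart of part (i) and is absent from your proposal. Two further (more minor) issues: your return map fixes the right endpoint, so it acts on the length $\ell$ alone, but its monotonicity does not follow immediately from Proposition~\ref{prop:cp} because two ordered solutions reach $b_0+1$ at \emph{different} times; and in the uniqueness argument you may only slide in \emph{time} (or by integer spatial shifts), since non-integer spatial translates do not solve the equation. Once $b'>\eta$ is known, the paper sidesteps both difficulties by reparametrizing with $x=b(t)$, obtaining a scalar ODE \eqref{eq:y} for the length with $1$-periodic coefficients, and running the monotone iteration there (Lemma~\ref{lem:y}); your monotone-iteration idea is morally the same but lives in the wrong coordinates.

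Your part (ii) is essentially the paper's argument: both rest on the imbalance identity $H-G=V_0\kappa\sin\alpha$ and the observation that when $\max\beta-\min\beta>V_0\kappa\sin\alpha$ one can choose endpoints $p$ with $\beta(p)=\min\beta$ and $q$ with $\beta(q)=\max\beta$, at distance between $\ell_0$ and $\ell_0+L$ apart (where $H(\ell_0)=\max\beta$), so that $\tfrac12|\vphi_x(q)|^2\le\beta(q)$ and $\tfrac12|\vphi_x(p)|^2\ge\beta(p)$ for $L$ small (using the Lipschitz bound of Lemma~\ref{lem:H} to control $G(\ell_0+L)-G(\ell_0)$). One simplification you should adopt: you do not need exact stationary solutions (solving a scalar matching equation for $q$ with equality in the angle conditions); the two \emph{inequalities} above already make $(p,q)$ a stationary barrier for the comparison principle, which is all that is required to pin the drop.
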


Finally, our  last result concerns  the homogenization of the problem above:
We now assume that $x\mapsto \beta_\eps(x)$ is a $\eps$-periodic function (for the sake of simplicity, we take $\beta_\eps(x)=\beta(x/\eps)$).

First, we state the following lemma which will be proved in the appendix:
\begin{lemma}\label{lem:r}
Assume that $x\mapsto \beta(x)$ is periodic. Then,  for all $q\in\RR$, the equation
$$x'(t)=q-\beta(x(t))$$
has a unique global solution (up to translation in time), which is periodic in time.
We define by
\begin{equation}\label{hom:vel} r(q):=\lim_{t\to\infty} \frac{x(t)-x(0)}{t}
\end{equation}
its effective speed. The function $q\mapsto r(q)$ is monotone increasing and continuous, and satisfies
$$r(q)=0, \quad \mbox{ for } q\in [\min \beta,\max \beta].$$
\end{lemma}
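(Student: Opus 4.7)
My approach is to lift the ODE to the universal cover and analyze the flow in two regimes, depending on whether $q$ lies in the closed range $[\min\beta,\max\beta]$ or not. Let $L > 0$ denote the period of $\beta$. Since $\beta$ is Lipschitz and bounded, Picard--Lindel\"of gives a unique global solution $x(\cdot)$ for each initial datum, and the right-hand side is $L$-periodic in $x$.

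If $q > \max\beta$, then $x'(t) \geq q-\max\beta > 0$, so $x$ is strictly increasing and reaches $x(0)+L$ at some first time
\begin{equation*}
T \;=\; \int_{x(0)}^{x(0)+L} \frac{dy}{q-\beta(y)} \;=\; \int_0^L \frac{dy}{q-\beta(y)},
\end{equation*}
the integral being independent of $x(0)$ by periodicity. The function $\tilde x(t):=x(t+T)-L$ solves the same ODE with $\tilde x(0)=x(0)$, hence $x(t+T)=x(t)+L$ by uniqueness, yielding a pulsating solution with $r(q)=L/T>0$. Two such solutions $x_1,x_2$ are strictly increasing surjections of $\RR$, so I can choose $s$ with $x_1(0)=x_2(s)$; uniqueness then forces $x_1(t)=x_2(t+s)$, giving uniqueness up to time translation. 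The case $q<\min\beta$ is symmetric. If $q\in[\min\beta,\max\beta]$ then $q-\beta$ has a zero and the corresponding constants are stationary solutions; by uniqueness no other solution can cross such a stationary one, so every solution is monotone, bounded, and converges to a zero of $q-\beta$, giving $r(q)=0$.

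Next I would derive monotonicity and continuity of $r$. Comparison of scalar ODEs with equal initial data gives $x_{q_1}(t)\le x_{q_2}(t)$ for $q_1\le q_2$, hence $r(q_1)\le r(q_2)$. Strict monotonicity outside the plateau is immediate from the explicit formula $r(q)=L/\int_0^L\frac{dy}{q-\beta(y)}$, in which the integrand is strictly decreasing in $q$; continuity on $(-\infty,\min\beta)\cup(\max\beta,\infty)$ follows by dominated convergence. The delicate point is continuity at the endpoints. Let $x^*$ be a maximizer of $\beta$ and let $\Lambda$ be its Lipschitz constant; then $\beta(y)\ge\max\beta-\Lambda|y-x^*|$ near $x^*$, so for small $\rho>0$,
\begin{equation*}
\int_0^L \frac{dy}{q-\beta(y)} \;\ge\; \int_{x^*-\rho}^{x^*+\rho} \frac{dy}{(q-\max\beta)+\Lambda|y-x^*|} \;=\; \frac{2}{\Lambda}\log\!\left(1+\frac{\Lambda \rho}{q-\max\beta}\right),
\end{equation*}
which diverges as $q\to\max\beta^+$. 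Hence $r(q)\to 0$, matching the value on the plateau; the argument at $q=\min\beta$ is symmetric.

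The main obstacle is precisely this last step: monotonicity, the explicit formula, and continuity on the open half-lines $\{q<\min\beta\}$ and $\{q>\max\beta\}$ are essentially formal, whereas showing that the period $T(q)$ actually blows up (rather than tending to a finite limit) as $q$ approaches the plateau requires quantitative control on $\beta$ near its extrema. The Lipschitz hypothesis is exactly what yields the logarithmic lower bound above and thus the matching of $r$ across the transitions $q=\min\beta$ and $q=\max\beta$.
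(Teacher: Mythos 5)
Your proof is correct, and it follows the same overall strategy as the paper's (Appendix A): split into the plateau $q\in[\min\beta,\max\beta]$, where solutions are trapped between stationary points so $r(q)=0$, and the complement, where the solution is strictly monotone and pulsating with $r(q)=L/T(q)$, and then show $T(q)\to\infty$ as $q$ approaches the plateau. The technical execution differs in a worthwhile way: you exploit the explicit one-dimensional formula $T(q)=\int_0^L \frac{dy}{q-\beta(y)}$, which makes monotonicity and continuity on the open half-lines immediate (dominated convergence) and reduces endpoint continuity to the divergence of that integral, which you obtain from the Lipschitz lower bound $\beta(y)\ge \max\beta-\Lambda|y-x^*|$ near a maximizer. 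The paper instead proves local Lipschitz continuity of $r$ on $(\max\beta,\infty)$ via a Gronwall comparison of two solutions with nearby $q$, and handles the endpoint by the differential inequality $x'\le \eta+Cx$ (Lipschitz case) or $x'\le\eta+Cx^2$ ($C^2$ case), which after integration gives the same logarithmic (resp. square-root) rate $r(q)\le C(-\ln(q-\max\beta))^{-1}$ that your integral estimate produces. Your route is more elementary and self-contained for the continuity statement actually claimed in the lemma; the paper's route yields slightly more (an explicit local Lipschitz modulus away from the plateau and the sharper rate under extra smoothness of $\beta$), at the cost of the Gronwall machinery. One cosmetic point: neither your argument nor the paper's establishes literal uniqueness up to time translation on the plateau (distinct stationary solutions exist there), but only $r(q)=0$ is needed, which you prove.
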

Note that the fact that $r(q)$ can be zero in a non trivial interval is a classical and important aspect of the homogenization of contact angle dynamics (see Figure \ref{fig:1}). It typically implies pinning of the contact line: for related results in higher dimensions, we refer to \cite{K} and \cite{K2}.
\begin{figure}[t]
\begin{center}
\scalebox{0.5}{\pdfimage{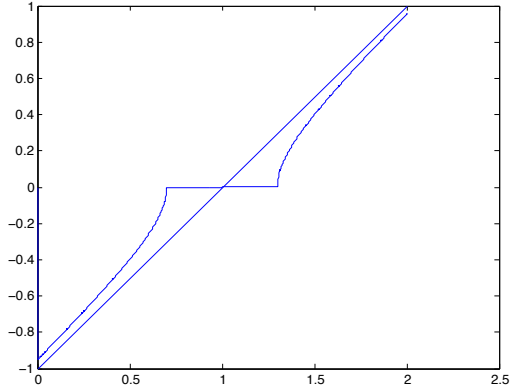}}
\end{center}
\caption{Homogenized velocity function $r(q)$ as a function of the slope $q$ when $\beta$ is given by  $\beta(x)=1+0.3\sin x$.
The straight line is the curve $y=q-\langle \beta \rangle$}
\label{fig:1}
\end{figure}

We can now state our homogenization result:
\begin{theorem}\label{thm:hom}
Let $u^\eps$ be a solution of  (\ref{eq:P}) with initial value $u_0$ and where $\beta=\beta(x/\eps)$ with $\beta$ a periodic function.
When $\eps$ goes to zero, $u^\eps$ converges uniformly to a function $u(x,t)$ solution of \eqref{eq:motionsupp}-\eqref{eq:motionob} satisfying the following velocity law:
\begin{equation}\label{eq:velhom}
\left\{
\begin{array}{ll}
\min\{ a'(t) +  r(\frac{1}{2} |u_x(a(t)) | ^2) , |u_x(a(t))|\} =0 \\[5pt]
\min\{ -b'(t) + r(\frac{1}{2} |u_x(b(t)) | ^2)  ,|u_x(b(t))|\} = 0
\end{array}
\right.
\quad \mbox{ a.e. $t\geq 0$.}
\end{equation}
Furthermore, there is at most one  solution $u$ of the homogenized problem \eqref{eq:motionsupp}-\eqref{eq:motionob}-\eqref{eq:velhom} with given initial data $u_0$.
\end{theorem}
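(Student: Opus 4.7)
The approach has three components: compactness of $\{u^\eps\}$, identification of the homogenized velocity law via Lemma \ref{lem:r}, and uniqueness for the limit problem.

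\textbf{Compactness.} Since $\|\beta\|_\infty$ is independent of $\eps$, Proposition \ref{barrier} applied with $\eps$-independent barriers (for instance, translates of the traveling waves from Theorem \ref{thm:TW} associated with the constants $\min\beta$ and $\max\beta$) yields a uniform Lipschitz estimate $|a'^\eps|+|b'^\eps|\le C$ together with a uniform positive lower bound on $b^\eps-a^\eps$. Arzelà-Ascoli then extracts a subsequence along which $a^\eps\to a$ and $b^\eps\to b$ uniformly on compact time intervals. Because \eqref{eq:motionob} does not involve $\beta$, standard stability of volume-constrained obstacle minimizers under domain convergence gives $u^\eps(\cdot,t)\to u(\cdot,t)$ uniformly, where $u(\cdot,t)$ is the minimizer on $(a(t),b(t))$.

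\textbf{Homogenized velocity law.} Fix $t_0$ with $|u_x(b(t_0))|>0$ and set $q(t):=\tfrac{1}{2}|u_x(b(t))|^2$. The plan is to show $b'(t_0)=r(q(t_0))$. First I would establish that $q$ is continuous in $t$ and that $q^\eps(t):=\tfrac{1}{2}|u_x^\eps(b^\eps(t))|^2$ converges locally uniformly to $q(t)$, using the uniform convergence of $u^\eps$ together with the elliptic regularity of the obstacle problem at the boundary of the support. On a short window $[t_0,t_0+\delta]$ the endpoint satisfies
\[
(b^\eps)'(t) = q^\eps(t) - \beta(b^\eps(t)/\eps) \quad \text{a.e.},
\]
so the rescaled quantity $y^\eps(s):=b^\eps(t_0+\eps s)/\eps$ obeys $y'=q^\eps(t_0+\eps s)-\beta(y)$, which is an $O(\eps)$ perturbation of the frozen fast ODE $y'=q(t_0)-\beta(y)$ of Lemma \ref{lem:r}. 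That lemma then yields $[b^\eps(t_0+\delta)-b^\eps(t_0)]/\delta \to r(q(t_0))$ as $\eps\to 0$, and hence $b'(t_0)=r(q(t_0))$; a symmetric computation handles $a'$. The degenerate case $u_x(b(t_0))=0$ is consistent with the $\min$-formulation, because $r(q)=0$ on the pinning interval $[\min\beta,\max\beta]$, so when $|u_x|$ is small the $\min$ is automatically achieved by the $r$-term.

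\textbf{Uniqueness.} Since $r$ is continuous and monotone increasing (Lemma \ref{lem:r}), the homogenized velocity law inherits the structure needed for the proof of Proposition \ref{prop:cp} to carry over with $\tfrac{1}{2}|u_x|^2-\beta$ replaced by $r(\tfrac{1}{2}|u_x|^2)$: monotonicity of $q\mapsto q-\beta$ is used only qualitatively and is retained by $q\mapsto r(q)$. Applying this adapted comparison to two solutions with identical initial data yields uniqueness, which in turn promotes the subsequential convergence above to full convergence of the family $\{u^\eps\}$.

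\textbf{Main obstacle.} The hardest step is the homogenized velocity law. While the underlying ODE analysis of Lemma \ref{lem:r} is clean, two points must be controlled uniformly in $\eps$. First, one needs a modulus of continuity in $t$ for the contact angle $|u_x^\eps(b^\eps(t))|$ that does not degenerate as $\eps\to 0$; this is delicate because the angle is determined nonlocally through the volume constraint and the Lagrange multiplier $\lambda^\eps(t)$, which themselves depend on the full history of the free boundary. Second, in the pinning regime $q(t)\in[\min\beta,\max\beta]$ the motion of $b^\eps$ may stall on $\eps$-scale intervals, and the averaging argument yielding $r(q)=0$ must be carried out carefully at the boundary of the pinning set, especially where transitions between pinned and moving regimes occur.
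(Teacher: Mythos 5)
Your outline follows the paper's proof almost step for step: uniform Lipschitz bounds on the endpoints plus stability of the volume-constrained obstacle minimizer give compactness; the velocity law is identified by rescaling time by $\eps$ near a fixed $t_0$, comparing $x^\eps(s)=b^\eps(t_0+\eps s)/\eps$ with the frozen fast ODE $x'=q(t_0)-\beta(x)$ of Lemma \ref{lem:r}, and invoking the continuity of $r$; uniqueness is obtained by rerunning the comparison argument of Proposition \ref{prop:cp} with $\frac{1}{2}|u_x|^2-\beta$ replaced by $r(\frac{1}{2}|u_x|^2)$, using only the monotonicity of $r\circ G$ and $r\circ H$. (The paper gets the compactness from the a priori bounds of Proposition \ref{prop:properties}, which are uniform in $\eps$ since they depend on $\beta$ only through $\sup\beta$, rather than from Proposition \ref{barrier}; this is a cosmetic difference.)

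The one point you flag as the ``main obstacle'' --- an $\eps$-uniform modulus of continuity for $t\mapsto \frac{1}{2}|u_x^\eps(b^\eps(t))|^2$ --- is in fact already settled by the preliminary analysis, and your concern about dependence on ``the full history of the free boundary'' is unfounded. By Proposition \ref{prop:obstacle}(iii), the contact angles and the Lagrange multiplier at time $t$ are functions of the current support length alone: $\frac{1}{2}|u_x^\eps(b^\eps(t))|^2=H(\ell^\eps(t))$ and $\frac{1}{2}|u_x^\eps(a^\eps(t))|^2=G(\ell^\eps(t))$. Since $H$ and $G$ are locally Lipschitz (Lemma \ref{lem:H}) and $\ell^\eps$ is Lipschitz in $t$ uniformly in $\eps$, the slope $q^\eps(t)=H(\ell^\eps(t))$ is uniformly Lipschitz and converges uniformly to $q(t)=H(\ell(t))$; this is precisely what lets one freeze $q$ on windows $|t-t_0|\le \delta/(K\eps)$ and sandwich $x^\eps$ between the periodic solutions at $q(t_0)\pm 2\delta$. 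Your second worry (stalling near the edge of the pinning set) is absorbed by this two-sided comparison together with the continuity of $r$ at $q=\max\beta$, which is part of Lemma \ref{lem:r}. One small correction: by Lemma \ref{lem:pos} the angle at the advancing front never degenerates ($-u_x(b)>0$ always), so the degenerate case arises only at the rear endpoint $a$, where $u_x(a)=0$ forces $\ell=\ell_c$; there the min-formulation \eqref{eq:velhom} is satisfied because the second argument $|u_x(a(t))|$ vanishes, not because the $r$-term does (indeed $r(0)<0$ when $\min\beta>0$), and accordingly only the one-sided bound $\limsup_{t\to 0^+}\bigl(-(a(t_0+t)-a(t_0))/t\bigr)\le r(0)$ is needed.
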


To conclude this analysis, we note that the homogenized problem described in Theorem \ref{thm:hom} has traveling wave solutions for all volume (for small volume, the traveling wave has speed zero, so it is really a stationary solution).
The speed of that traveling wave can be determined as follows:
\begin{itemize}
\item If $r(V_0 \kappa\sin\alpha)+r(0)<0$, then there exists $q_0$ such that 
$$ r(q_0+V_0 \kappa\sin\alpha)+r(q_0)=0$$
and the speed of the traveling wave solution is given by
$$ c=r(q_0+V_0 \kappa\sin\alpha)=-r(q_0)$$
(note that $q_0$ may not be unique, but $c$ is unique).
\item If $r(V_0 \kappa\sin\alpha)+r(0)\geq 0$, then the speed 
of the traveling wave solution is given by
$$ c=r(V_0 \kappa\sin\alpha)$$
(when this happens, the gradient in the rear of the drop of the traveling wave solution is zero)
\end{itemize}
Of course, we recover the speed of the traveling wave for constant coefficient $\beta$ (formula \eqref{eq:vitesse}) when we take $r(q)=q-\beta$.
Figure \ref{fig:2} compares the speed of the traveling wave as a function of $V_0 \kappa \sin\alpha$ for the homogenized  problem \eqref{eq:velhom} when $\beta_\eps(x)=1+0.3\sin (x/\eps)$
 (i.e. with the function $r$ shown in Figure \ref{fig:1}) and for the constant case $\beta=\langle \beta_\eps\rangle=1$.
Note the Lipschitz corner near $ V_0 \kappa \sin\alpha=2$ which corresponds to the critical volume for which the gradient in the rear of the drop becomes zero.

\begin{figure}[t]
\begin{center}
\scalebox{0.5}{\pdfimage{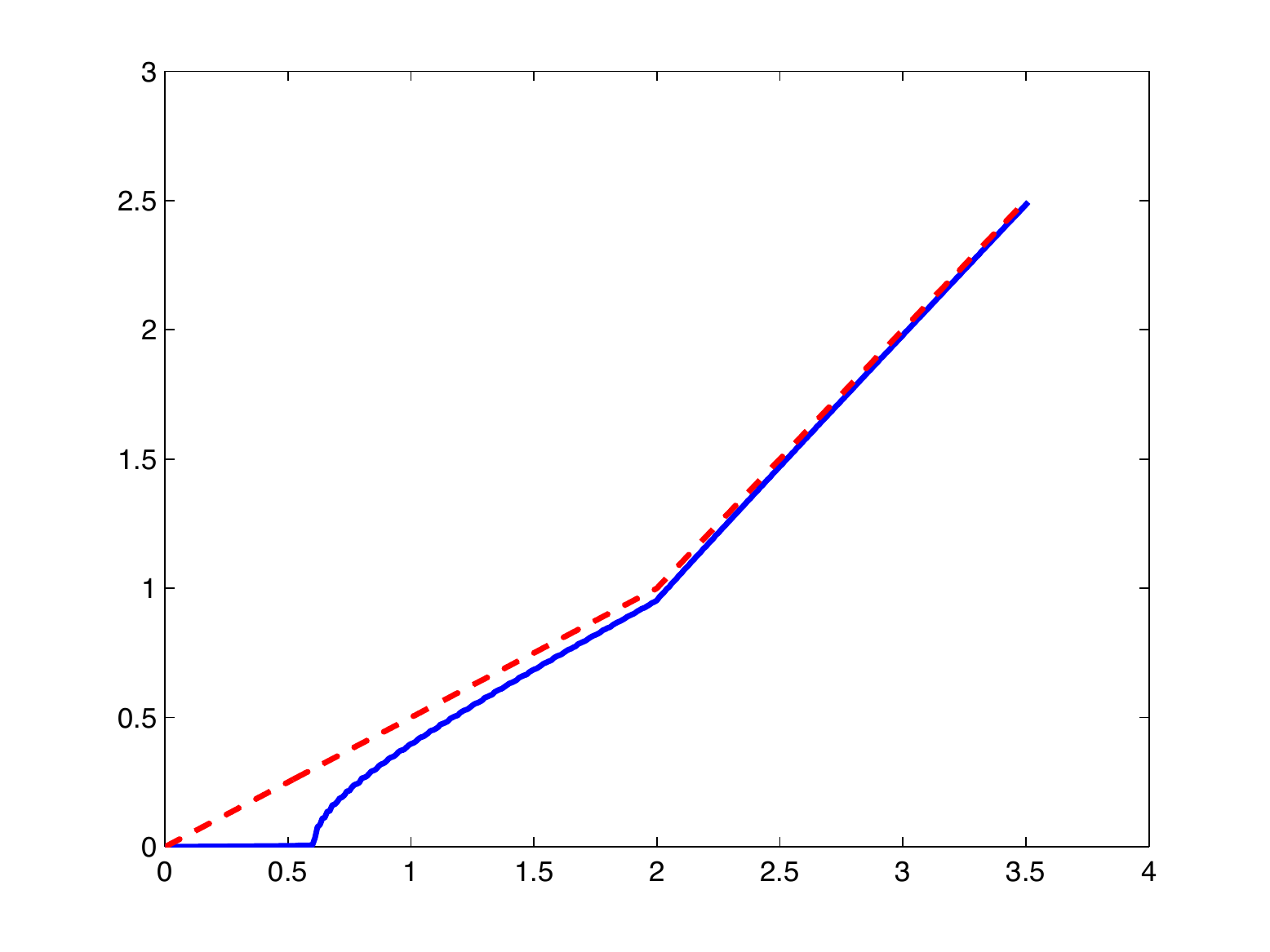}}
\end{center}
\caption{Speed of the traveling wave solution as a function of $V_0 \kappa \sin\alpha$ for the homogenized problem \eqref{eq:velhom} when
$\beta_\eps(x)=1+0.3\sin (x/\eps)$ (solid line), and for the constant case $\beta=\langle \beta_\eps\rangle=1$.}
\label{fig:2}
\end{figure}

\vspace{10pt}

The rest of the paper is devoted to the proof of these results.

\section{Preliminaries}\label{sec:prel}
\subsection{Another formulation for  (\ref{eq:P})}
Definition \ref{def:1} gives the most natural notion of solution for \eqref{eq:P}. 
However, it is not the most practical one. 
In this section, we derive an equivalent formulation which will be more convenient.
First, we note that if $u$ is a solution in the sense of Definition \ref{def:1}, then $u(\cdot ,t)$ solves:
\begin{equation} \label{eq:motionc2}
\left\{
\begin{array}{ll}
-u_{xx} = \lambda(t) - u \kappa \cos\alpha + (x-b(t)) \kappa \sin \alpha \quad & \mbox{ in } (a(t),b(t)) \\[5pt]
u(a(t),t)=u(b(t),t)=0 ,\\[5pt]
\ds \int u\, dx = V_0.
\end{array}
\right.
\end{equation}

It is tempting to replace (\ref{eq:motionsupp})-(\ref{eq:motionob}) by (\ref{eq:motionc2}); However, solutions of (\ref{eq:motionc2}) might take negative values (for large intervals $(a(t),b(t))$).
We thus write:
\begin{definition}\label{def:2} 
Given an  initial data $u_0$ satisfying  \eqref{eq:init}, 
we say that $u(x,t)$ is a solution of   (\ref{eq:P}) if there exist some Lipschitz functions $a(t)$ and $b(t)$ 
satisfying
$$ a(0)=a_0,\quad b(0)=b_0$$
such that $u(\cdot,t)$ solves (\ref{eq:motionc2}) for all $t\geq 0$ and  $a(t)$, $b(t)$ satisfy
\begin{equation} \label{eq:motionc1}
\left\{
\begin{array}{ll}
\min\{ a'(t) +  \frac{1}{2} |u_x(a(t)) | ^2 - \beta(a(t)), u_x(a(t))\} =0 \\[5pt]
b'(t) = \frac{1}{2} |u_x(b(t)) | ^2 - \beta(b(t)) .
\end{array}
\right.
\end{equation}
\end{definition}

In this second formulation, the positivity  of $u$ in $(a(t),b(t))$ is enforced by the condition $u_x(a)\geq 0$. Indeed, we can show:
\begin{lemma}\label{lem:pos}
Let $u(\cdot,t)$ be the solution of (\ref{eq:motionc2}), then $-u_x(b(t),t)>0$. Furthermore, $u(x,t)>0$ in $(a(t),b(t))$ if and only $u_x(a(t),t)\geq 0$.
In particular, Definitions \ref{def:1} and \ref{def:2} are equivalent.
\end{lemma}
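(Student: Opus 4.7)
The plan is to establish Part 1 ($u_x(b)<0$) first, then Part 2 ($u>0$ in $(a,b)$ iff $u_x(a)\geq 0$), and combine them to conclude that Definitions~\ref{def:1} and~\ref{def:2} coincide. For Part 1, the starting point is a Pohozaev-type identity obtained by multiplying the linear ODE in~\eqref{eq:motionc2} by $u_x$ and integrating over $(a,b)$. The Dirichlet conditions $u(a)=u(b)=0$ annihilate the $\lambda u_x$ and $-u\kappa\cos\alpha\, u_x$ contributions, and an integration by parts on $(x-b)\kappa\sin\alpha\,u_x$ using the volume constraint yields
\[
u_x(b)^2 - u_x(a)^2 = 2\kappa\sin\alpha\,V_0>0,
\]
so in particular $u_x(b)\neq 0$. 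To determine the sign I would fix $(a,b,V_0)$ and view \eqref{eq:motionc2} as a one-parameter family in $\alpha\in[0,\pi/2)$: unique solvability of this linear BVP and smoothness of its coefficients in $\alpha$ make $\alpha\mapsto u_x(b)$ continuous. At $\alpha=0$ the problem reduces to the symmetric $\cosh$-drop, which is strictly positive inside $(a,b)$ with $u_x(b)<0$ by explicit computation. Since the displayed identity above forbids $u_x(b)=0$ for any $\alpha>0$, continuity forces $u_x(b)<0$ throughout.

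The forward direction of Part 2 is immediate: $u>0$ in $(a,b)$ with $u(a)=0$ makes $a$ a boundary minimum of $u$, hence $u_x(a)\geq 0$. For the converse, suppose $u_x(a)\geq 0$ but $u$ has a zero in $(a,b)$, and set $c=\sup\{x\in[a,b]:u\geq 0 \text{ on } [a,x]\}$. A Taylor expansion at $a$ gives $u\geq 0$ in a right neighbourhood of $a$: this is immediate when $u_x(a)>0$, while the degenerate case $u_x(a)=0$ follows from the ODE relation $u_{xx}(a)=L\kappa\sin\alpha-\lambda$ combined with the representation of $\lambda$ via the volume constraint (the positivity $u_{xx}(a)>0$ ultimately reduces to the elementary inequality $\sinh(\mu L)>\mu L$, with $\mu=\sqrt{\kappa\cos\alpha}$). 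So $c>a$; by hypothesis $c<b$, $u(c)=0$, and $u<0$ just to the right of $c$. Applying the same Pohozaev identity on $(a,c)$ gives
\[
u_x(c)^2 - u_x(a)^2 = 2\kappa\sin\alpha\int_a^c u\,dx\geq 0,
\]
with strict inequality since $\int_a^c u=0$ combined with $u\geq 0$ would force $u\equiv 0$ on $(a,c)$, and hence on $(a,b)$ by ODE unique continuation, contradicting $V_0>0$. Together with $u_x(c)\leq 0$ (boundary max of $u\geq 0$ at $c$), this gives $u_x(c)<0$, so $u<0$ on a genuine right neighbourhood of $c$. Iterating the argument on successive intervals of alternating sign---finitely many since $u$ is real-analytic on $[a,b]$---one eventually reaches $b$ on an interval where $u<0$, forcing $u_x(b)\geq 0$ in contradiction to Part 1.

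Equivalence of Definitions~\ref{def:1} and~\ref{def:2} is then a direct consequence. A solution of Definition~\ref{def:1} has $u\geq 0$ with support equal to $(a,b)$, so the obstacle is inactive in the interior and $u$ solves the unconstrained ODE~\eqref{eq:motionc2}; the boundary minimum at $a$ gives $u_x(a)\geq 0$ automatically, and Part 1 makes $|u_x(b)|>0$, so the $\min$ boundary condition at $b$ in Definition~\ref{def:1} reduces to $b'(t)=\tfrac12|u_x(b)|^2-\beta(b)$ as required in Definition~\ref{def:2}. Conversely, a solution of Definition~\ref{def:2} has $u_x(a)\geq 0$ built into its $\min$ boundary condition at $a$, so Part 2 yields $u>0$ on $(a,b)$; the unconstrained ODE solution then coincides with the obstacle-problem minimiser of~\eqref{eq:motionob}. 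The main obstacle in the plan is the sign determination in Part 1: the Pohozaev identity cleanly rules out $u_x(b)=0$, but pinning down the sign itself seems to require either the continuity argument anchored at the explicitly solvable flat case, or an explicit computation of $u_x(b)$ via the Green's function of $-\partial_x^2+\kappa\cos\alpha$ showing that both terms in the resulting closed-form expression carry a negative sign.
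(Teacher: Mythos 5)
Your Part 1 is correct but follows a genuinely different route from the paper. You rule out $u_x(b)=0$ via the integrated identity $u_x(b)^2-u_x(a)^2=2\kappa\sin\alpha\,V_0$ and then pin the sign by a homotopy in $\alpha$ anchored at the explicitly solvable symmetric drop; the paper instead shows directly from the equation that every interior local minimum lies to the left of the maximum point, so $u>0$ near $b$ and $-u_x(b)\geq 0$, and then upgrades to a strict inequality with a Hopf-type barrier $h(x)=-\mu(x-b)$. Both work; the paper's version is more local and avoids having to justify continuous dependence of the constrained BVP on $\alpha$, while yours reuses the Pohozaev identity you need anyway. Your treatment of the degenerate case $u_x(a)=0$ (computing $u_{xx}(a)=(b-a)\kappa\sin\alpha-\lambda>0$, which does reduce to $\sinh(\mu L)>\mu L$) also checks out.

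The genuine gap is in the last step of the converse of Part 2. Real-analyticity gives finitely many sign changes, but nothing in your argument forces the component of $\{u\neq 0\}$ adjacent to $b$ to be one on which $u<0$: if the sign pattern were $+,-,+$ you would arrive at $b$ with $u>0$ and $u_x(b)\leq 0$, which is perfectly consistent with Part 1, so no contradiction is reached. Nor does the Pohozaev identity propagate the contradiction through a negativity interval $(c,c_2)$: there it only yields $u_x(c_2)^2<u_x(c)^2$ together with $u_x(c_2)\geq 0$, which leaves both signs available afterwards. What is missing is a bound on the number of interior zeros of $u$, and this is precisely what the paper supplies: if $u_x(a)\geq 0$ and $u\leq 0$ somewhere, one extracts three distinct zeros of $u_x$ in $[a,b]$, hence two zeros $y_1<y_2$ of $u_{xx}$; but $w:=u_{xx}=u\kappa\cos\alpha-\lambda-(x-b)\kappa\sin\alpha$ solves $w''=\kappa\cos\alpha\,w$, so $w(y_1)=w(y_2)=0$ forces $w\equiv 0$ on $(y_1,y_2)$ by the maximum principle and then on all of $(a,b)$ by unique continuation, contradicting $w(a)-w(b)=(b-a)\kappa\sin\alpha\neq 0$. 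Inserting this zero-counting step (equivalently: $u_{xx}$ is of the form $Ae^{\mu x}+Be^{-\mu x}$ and so vanishes at most once, hence $u$ changes sign at most once) closes your argument; as written, the ``iteration over alternating intervals'' does not terminate in a contradiction.
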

Note that it might have seemed more natural to write the second equation in  \eqref{eq:motionc1} as
$$\min\{ -b'(t) + \frac{1}{2} |u_x(b(t)) | ^2 - \beta(b(t)) ,-u_x(b(t))\}.$$
However, lemma \ref{lem:pos} shows that the condition $-u_x(b(t))>0$  always holds.

\begin{proof}[Proof of Lemma \ref{lem:pos}]
We drop the $t$ dependence in what follows.
Since $V_0>0$, there exists $x_0\in(a,b)$ such that $u(x_0)=\max_{x\in (a,b)} u>0$. Since $-u''(x_0)\geq 0$,  (\ref{eq:motionc2}) implies
\begin{equation}\label{eq:lambda}
 \lambda +( x_0-b)\kappa \sin \alpha   \geq  u(x_0) \kappa \cos\alpha >0 .
\end{equation}
Assume now that  $u$ has a local minimum at a point $x_1\in(a,b)$. Then $-u''(x_1)\leq 0$, and so  (\ref{eq:motionc2}) implies
$$ \lambda + (x_1-b)\kappa \sin \alpha   \leq  u(x_1) \kappa \cos\alpha .
$$ 
Since $u(x_1)\leq v(x_0)$, we deduce 
$$ x_1 \leq x_0,$$ 
and so we always have $u(x)> 0$ in $(x_0,b)$. In particular, we have $-u'(b)\geq 0$.
In order to show that $-u'(b)> 0$, we note that (\ref{eq:lambda}) also implies
 $\lambda  >0$, and so the function $h(x)=-\mu (x-b)$ satisfies
 $$ -h'' \leq \lambda - h \kappa \cos\alpha + (x-b) \kappa \sin \alpha = \lambda -[\mu  \kappa \cos\alpha +  \kappa \sin \alpha](b-x) $$
 in a neighborhood of $b$. A Hopf's Lemma - type argument yields $-u'(b)> 0$.
\medskip

Assume now that $u'(a)\geq 0$, and  that there exists $x_0 \in (a,b)$ such that $u(x_0)\leq 0$.
Using the boundary conditions, we deduce that there exist at least three distinct points in $[a,b]$ where $u'$ vanishes, and so there exists at least two distinct points $y_1$, $y_2$ in $(a,b)$ where $u''$ vanishes.

Therefore, the function $w= u \kappa \cos\alpha -\lambda-(x-b) \kappa \sin \alpha $, which solves 
$$w''=\kappa \cos\alpha w \quad \mbox{ in } (a,b)$$
is such that $w''(y_i)=w(y_i)=0$, $i=1,\, 2$ and the maximum principle implies $w=0$ in $(y_1,y_2)$ and thus $w=0$ in $(a,b)$, which is impossible (look at the values of $w$ at $a$ and $b$).

We have thus shown that $u'(a)\geq 0$ implies that $u>0$ in $(a,b)$.
Conversely, if $u>0$ in $(a,b)$, it is readily seen that $u'(a)\geq 0$.

\end{proof}

Finally, we give a third definition which will be very useful later on.  We will show in  Proposition \ref{prop:obstacle} $(iv)$ that there exists a {\bf critical length} $\ell_c>0$ (depending on the volume $V_0$ and the inclination $\alpha$) such that the solution of (\ref{eq:motionc2}) is positive in $(a(t),b(t))$ if and only if $b(t)-a(t)\leq \ell_c$. 
This length $\ell_c$ is the longest possible length of the support of the drop.
This leads to the following  third formulation:

\begin{definition}\label{def:3}
Given an  initial data $u_0$ satisfying  \eqref{eq:init}, 
we say that $u(x,t)$ is a solution of  (\ref{eq:P}) if  there exist some Lipschitz functions $a(t)$ and $b(t)$ 
such that $u(\cdot ,t)$ solves (\ref{eq:motionc2})
and 
$$ a(0)=a_0,\quad b(0)=b_0$$
with
\begin{equation} \label{eq:mmotionc1'}
\left\{
\begin{array}{ll}
\min\{ a'(t) +  \frac{1}{2} |u_x(a(t)) | ^2 -\beta(a(t)),a(t)-b(t)+\ell_c\}=0 \\[5pt]
b'(t) = \frac{1}{2} |u_x(b(t)) | ^2 - \beta(b(t)) ,
\end{array}
\right.
\end{equation}
a.e. $t\geq 0$.
\end{definition}

Equivalently, denoting $\ell(t)=b(t)-a(t)$, we can write:
\begin{equation} \label{eq:motionl}
\left\{
\begin{array}{ll}
 \max\left\{ \ell'(t) -\left[ \frac{1}{2} |u_x(b(t)) | ^2+ \frac{1}{2} |u_x(a(t)) | ^2 - \beta(a(t))-\beta(b(t)) \right] , \ell(t)-\ell_c\right\} = 0
  \\[5pt]
b'(t) = \frac{1}{2} |u_x(b(t)) | ^2 - \beta(b(t)) ,
\end{array}
\right.
\end{equation}
where $u_x(b(t))$ and $u_x(a(t))$ only depends on $l(t)$ (see Proposition \ref{prop:obstacle} $(iii)$).

We then have:
\begin{proposition}\label{prop:eq}
Definition \ref{def:2} and \ref{def:3} are equivalent.
\end{proposition}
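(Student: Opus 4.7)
The plan is to observe that (\ref{eq:motionc2}) and the evolution law for the right endpoint are identical in Definitions \ref{def:2} and \ref{def:3}. Hence the equivalence reduces to a single pointwise (in $t$) statement: that the condition
$$\min\{A(t),\, u_x(a(t),t)\}=0$$
is equivalent to
$$\min\{A(t),\, \ell_c-\ell(t)\}=0,$$
where $A(t)=a'(t)+\tfrac{1}{2}|u_x(a(t),t)|^2-\beta(a(t))$ and $\ell(t)=b(t)-a(t)$.

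The key ingredient is that the two ``alternative'' slots, $u_x(a(t),t)$ and $\ell_c-\ell(t)$, have the same sign, with simultaneous vanishing. I would combine Lemma \ref{lem:pos} (which asserts that the solution of (\ref{eq:motionc2}) is positive on $(a(t),b(t))$ if and only if $u_x(a(t),t)\ge 0$) with the forthcoming Proposition \ref{prop:obstacle}$(iv)$ (which asserts that this positivity is in turn equivalent to $\ell(t)\le\ell_c$). Together these immediately give
$$u_x(a(t),t)\ge 0\iff \ell_c-\ell(t)\ge 0.$$
To upgrade the weak inequality to the strict one, I would invoke the explicit dependence of the solution of (\ref{eq:motionc2}) on the length $\ell$ (this ODE is solvable essentially by quadrature, and by the structure described in Proposition \ref{prop:obstacle} the gradient at the left endpoint is a strictly monotone function of $\ell$ on $[0,\ell_c]$, vanishing precisely at $\ell=\ell_c$). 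Thus $u_x(a(t),t)=0$ exactly when $\ell(t)=\ell_c$, and consequently $u_x(a(t),t)>0$ exactly when $\ell(t)<\ell_c$.

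With the sign equivalence in hand, the equivalence of the two $\min$ conditions is immediate: both require $A(t)\ge 0$, both require the second argument to be $\ge 0$ (and by the above these requirements coincide), and both require that at least one of the two inequalities be an equality (which again coincide). The positivity of $u(\cdot,t)$ on $(a(t),b(t))$ is enforced in either formulation: in Definition \ref{def:2} through Lemma \ref{lem:pos} applied to $u_x(a)\ge 0$, and in Definition \ref{def:3} through Proposition \ref{prop:obstacle}$(iv)$ applied to $\ell\le \ell_c$.

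The only non-trivial step is the monotonicity/strictness statement needed to match the \emph{zero} cases on the two sides; this is the part I would need to verify carefully using the structure of (\ref{eq:motionc2}), and it is the main (albeit mild) obstacle, since everything else is a direct bookkeeping rewrite once the sign equivalence is established.
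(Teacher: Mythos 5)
Your proposal is correct and follows essentially the same route as the paper: the paper derives Proposition \ref{prop:eq} as an immediate consequence of Proposition \ref{prop:obu}, which packages exactly the sign equivalence and strictness statement ($v'(a)\geq 0 \iff b-a\leq \ell_c$, with $v'(a)>0 \iff b-a<\ell_c$) that you identify as the key ingredient, obtained from Lemma \ref{lem:pos} together with Proposition \ref{prop:obstacle} (iii)--(iv). The "non-trivial step" you flag is precisely what those results supply, so nothing further is needed.
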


In order to prove this proposition, we will need to derive several important properties of the solution of the obstacle problem \eqref{eq:motionob}, which is done in the next section.
Proposition \ref{prop:eq} will be an immediate consequence of Proposition \ref{prop:obu} below.

\subsection{Analysis of the obstacle problem \eqref{eq:motionob}}\label{sec:ob}
Throughout this section, we fix a volume $V_0$, and all constants will depends on $V_0$, $\alpha$ and $\kappa$.

Given $a<b$, we consider  $u$ solution of 
\begin{equation}\label{eq:obstacle}
u=\mathrm{argmin} \{\F(v)\,;\, v\in H^1_0(a,b),\; v\geq 0 ,\; \int_a^b v(x)\, dx = V_0 \}.
\end{equation}
Then classical arguments for obstacle type problems yield that $u\in \mathcal C^{1,1}(a,b)$ solves
\begin{equation}\label{eq:obmin}
\min\{ -u''- \lambda - (x-b) \kappa \sin \alpha + u \kappa \cos\alpha, u\} =0 \mbox{ in } (a,b), \quad u(a)=u(b)=0, \quad \int u \, dx= V_0 
\end{equation}
(where $\lambda\in \RR$ is determined by the volume  constraint).

The main properties of $u$ are gathered in the following proposition:
\begin{proposition}\label{prop:obstacle}
Let $u$ be a $\mathcal C^{1,1}$ solution of (\ref{eq:obmin}), then:
\item[(i)] $\{u>0\}$ is an interval of the form $(a',b)$  and $-u'(b)> 0$.
\item[(ii)]  $u$ is the unique solution of (\ref{eq:obmin}), and is also solution of (\ref{eq:obstacle}).
\item[(iii)] $\lambda$,   $-|\{u>0\}|$, $-u'(b)$ and $u'(a')$ only depends on $a$ and $b$ through $(b-a)$ and are decreasing functions of $b-a$.
\item[(iv)] There exists $\ell _c>0$ such that $\{u>0\}=(a,b)$ if and only if $b-a\leq \ell_c$.
Furthermore, $u'(a)>0$ if and only if $b-a<\ell_c$.
\end{proposition}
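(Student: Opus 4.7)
My plan is to handle each part sequentially, using translation invariance and the linear structure of the underlying ODE for most of the heavy lifting.

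\textbf{For (i)}, I first establish $\lambda>0$ by evaluating \eqref{eq:obmin} at an interior maximum $x_0$ of $u$, which forces $\lambda\geq u(x_0)\kappa\cos\alpha+(b-x_0)\kappa\sin\alpha>0$. The obstacle inequality then traps the contact set $\{u=0\}$ inside $\{x\leq b-\lambda/(\kappa\sin\alpha)\}$, keeping it away from $b$; in particular the rightmost component of $\{u>0\}$ is attached to $b$. To exclude additional components to its left, suppose $(\alpha_i,\beta_i)$ is such a component with $\beta_i<b$. By $\mathcal C^{1,1}$ regularity $u(\beta_i)=u'(\beta_i)=0$, and applying the obstacle inequality at any point strictly to the right of $\beta_i$ in the contact set yields $\lambda<(b-\beta_i)\kappa\sin\alpha$, whence $u''(\beta_i)>0$. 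Local ODE analysis then shows $u$ becomes positive, grows going leftward, and cannot vanish again---contradicting $u(\alpha_i)=0$. Thus $\{u>0\}=(a',b)$. Finally $-u'(b)>0$ follows from a Hopf-type argument: if $u'(b)=0$, then $u''(b)=-\lambda<0$ would force $u<0$ just left of $b$. \textbf{For (ii)}, strict convexity of $\F$ gives uniqueness of the variational minimizer; for any $\mathcal C^{1,1}$ solution $u$ of \eqref{eq:obmin} and admissible $v$, integration by parts combined with the equality/inequality dichotomy in \eqref{eq:obmin}, the constraint $\int(v-u)\,dx=0$, and $v\geq 0=u$ on the contact set yields $D\F(u)(v-u)\geq 0$, so $\F(v)\geq \F(u)$ by convexity, identifying $u$ with the unique minimizer.

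\textbf{For (iii)}, translation invariance of \eqref{eq:obmin} under $(a,b,u(x))\mapsto(a+s,b+s,u(x-s))$ combined with uniqueness from (ii) shows the four quantities depend only on $L=b-a$. I split the monotonicity argument into two regimes. When $L\leq\ell_c$ (obstacle inactive, $a'=a$), $u$ solves the linear ODE with $u(a)=u(b)=0$ and $\int u\,dx=V_0$; since $u$ depends affinely on $\lambda$, the volume constraint gives $\lambda=\lambda(L)$ implicitly, and an explicit solution in terms of $\cosh$ and $\sinh$ (or implicit differentiation) yields strict monotonicity of $\lambda$, $u'(a)$, and $u'(b)$ in $L$. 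When $L\geq\ell_c$ (obstacle active), $u(a')=u'(a')=0$ turns the ODE into an IVP at $a'$, so the two conditions $u(b)=0$ and $\int u\,dx=V_0$ jointly determine $\lambda$ and $L':=b-a'$ independently of $L$. Hence $\lambda$, $|\{u>0\}|=L'$, and $u'(b)$ are constant on $[\ell_c,\infty)$, while $u'(a')\equiv 0$. Gluing the two regimes yields the claimed monotonicity.

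\textbf{For (iv)}, the map $L\mapsto u'(a)$ is strictly decreasing and continuous on the obstacle-free regime by (iii), with $u'(a)\to+\infty$ as $L\to 0^+$ (mass concentration into a narrow spike) and $u'(a)<0$ for $L$ sufficiently large (otherwise Lemma \ref{lem:pos} would yield everywhere-positive solutions on arbitrarily long intervals, incompatible with the obstacle activating once the contact set is forced nonempty). The intermediate value theorem produces a unique critical length $\ell_c$; Lemma \ref{lem:pos} then gives the dichotomy $\{u>0\}=(a,b)\iff L\leq\ell_c$, with $u'(a)>0\iff L<\ell_c$ (noting $u'(a)=0$ whenever $L\geq\ell_c$, since then either the obstacle is marginally active or $u\equiv 0$ on a left neighborhood of $a$). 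The main technical difficulty is the strict monotonicity in (iii) for $L\leq\ell_c$, which requires either the explicit hyperbolic-function computation tracking the $L$-derivative of $\lambda$, or a careful comparison/maximum-principle argument.
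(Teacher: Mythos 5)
Your arguments for (i) and (ii) are essentially sound, though they take different routes from the paper. For (i) the paper simply multiplies the equation by $u'$ and integrates over a component $(c,d)$ of $\{u>0\}$ to obtain $u'(d)^2=u'(c)^2+2\kappa\sin\alpha\int_c^d u\,dx>0$, which contradicts the $\mathcal C^{1,1}$ regularity (forcing $u'(d)=0$) unless $d=b$; this is shorter than your local convexity analysis and, importantly, yields the identity $u'(b)^2-u'(a')^2=2\kappa\sin\alpha V_0$ that the paper reuses in (iii). For (ii) the paper runs a direct two-step maximum-principle comparison of two solutions (first $\lambda_1\geq\lambda_2$, then $u_1=u_2$), whereas your convexity-of-$\F$ argument is a legitimate alternative.

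The proposal has two genuine gaps, and they sit exactly at the substantive points of the proposition. First, in (iii) you defer the monotonicity of $\lambda$, $u'(a)$ and $-u'(b)$ in the regime $L\leq\ell_c$ to an unexecuted ``explicit hyperbolic-function computation or comparison argument''; this is the heart of (iii), and the sign of the $L$-derivative of $\lambda$ coming from the volume constraint is not a routine verification from the closed-form solution. The paper's route: the comparison in (ii) already shows $\lambda$ is decreasing; then, fixing $b$ and comparing $u_1$ on $(a_1,b)$ with $u_2$ on $(a_2,b)$ for $a_2<a_1$, the difference $v=u_1-u_2$ vanishes at some $x_0\in[a_1,b)$ and satisfies $-v''+v\kappa\cos\alpha=\lambda_1-\lambda_2\geq 0$ on $(x_0,b)$, so $v\geq 0$ there and $-u_1'(b)\geq -u_2'(b)$; the monotonicity of $u'(a')$ is then transferred from that of $u'(b)$ via $u'(b)^2-u'(a')^2=2\kappa\sin\alpha V_0$. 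Second, in (iv) the finiteness of $\ell_c$ is justified only by the circular remark that positivity on arbitrarily long intervals would be ``incompatible with the obstacle activating''; you never show that the obstacle must activate. The paper proves it: if $\{u>0\}=(a,b)$, then $-u''\leq\lambda+(x-b)\kappa\sin\alpha$ on $(a,b)$ (since $u\geq 0$), and integrating together with $u'(a)\geq 0\geq u'(b)$ gives $\frac{1}{2}(b-a)\kappa\sin\alpha\leq\lambda$, which is incompatible with the boundedness of $\lambda$ (it is decreasing in $b-a$) once $b-a$ is large. Without these two arguments the proposal does not establish (iii) or (iv).
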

The critical length $\ell_c$ is the longest possible length of the support of the drop. We recall that it depends on $V_0$, $\kappa$ and $\alpha$. It will play a very important role in the analysis of our problem.
 
 \medskip
In view of $(iii)$, we can define
\begin{equation}\label{not:GH} 
G(b-a):= \frac{1}{2} |u'(a)|^2,  \quad H(b-a):=  \frac{1}{2} |u'(b)|^2 
\end{equation}
where $u$ is the solution of  (\ref{eq:obmin}). 
The functions $\ell\mapsto G(\ell)$ and  $\ell\mapsto H(\ell)$ are then both monotone decreasing, and multiplying the equation (\ref{eq:obmin}) by $u'$ and integrating over $(a',b)$ yields
$$
 \frac{1}{2}u'(b)^2 -\frac{1}{2} u'(a')^2 =  \kappa \sin\alpha V_0 
$$
or
\begin{equation}\label{eq:GHV}
 H(\ell)-G(\ell) = 
\kappa \sin\alpha V_0 .
\end{equation}

We also define 
\begin{equation}\label{eq:Fdef}
F: \ell \mapsto G(\ell)+H(\ell),
\end{equation}
which is a decreasing function as well.

Finally, we have the following lemmas, which will be useful later on:
\begin{lemma}[Lipschitz regularity]\label{lem:H}
For all $\delta_0>0$, there exists $C>0$ such that
$$0\leq  H(\ell_0)-  H(\ell_0+\eta)\leq C\eta$$ and
$$0\leq G(\ell_0)- G(\ell_0+\eta) \leq C\eta$$
for $\ell_0\geq \delta_0$ and $\eta\geq 0$.
In particular, it follows that $H$ and $G$ (and therefore $F$) are locally Lipschitz on $(0,\infty)$.
\end{lemma}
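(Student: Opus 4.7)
The plan is to exploit the identity \eqref{eq:GHV}, $H(\ell) - G(\ell) = \kappa\sin\alpha\, V_0$, to reduce the lemma to a Lipschitz estimate for $H$ alone: the monotonicity part of the statement is Proposition~\ref{prop:obstacle}(iii), while the upper bounds for $G$ and for $F = G + H$ follow at once from the corresponding bound for $H$. The proof for $H$ then splits into two regimes, $\ell \le \ell_c$ and $\ell \ge \ell_c$, in which the minimizer behaves very differently.

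\textbf{The regime $\ell \ge \ell_c$.} In this range the support of $u$ is a proper subinterval $(a'', b)$ on which $u$ solves the linear Dirichlet problem with integral constraint $\int u = V_0$, and the obstacle condition enforces the additional Neumann relation $u'(a'') = 0$ at the free left endpoint. The four conditions $u(a'')=u(b)=0$, $u'(a'')=0$, $\int u = V_0$ overdetermine the second-order linear ODE in a way that selects the length $b-a''$; by the very definition of $\ell_c$ in Proposition~\ref{prop:obstacle}(iv), the extra condition $u'(a'')=0$ occurs precisely at length $\ell_c$. Hence the support, and $u$ itself, are independent of $\ell$ throughout $[\ell_c,\infty)$, so $H$ is constant there.

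\textbf{The regime $\ell \le \ell_c$.} Here $u$ is positive on all of $(a,b)$ and solves the \emph{linear} Dirichlet problem $-u'' + \kappa\cos\alpha\, u = \lambda + (x-b)\kappa\sin\alpha$ with $\int u = V_0$. Fixing $b=0$ by translation and setting $w(s) := u(\ell s)$ for $s \in [-1,0]$ transfers the problem to the fixed interval $[-1,0]$ and the ODE
$$-w'' + \ell^2 \kappa\cos\alpha\, w = \ell^2 \lambda + \ell^3 s\, \kappa\sin\alpha, \quad w(-1) = w(0) = 0,$$
whose coefficients and right-hand side depend smoothly on $(\ell,\lambda)$. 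The Lagrange multiplier $\lambda = \lambda(\ell)$ is determined by $\int_{-1}^0 w = V_0/\ell$, and the implicit function theorem yields that $\lambda(\ell)$ is smooth: the derivative of the volume functional with respect to $\lambda$ equals $\int z$, where $z$ solves $-z'' + \ell^2\kappa\cos\alpha\, z = \ell^2$ with zero Dirichlet data, and the maximum principle makes $z$ strictly positive. Consequently $w(\cdot;\ell)$, and in particular $w'(0;\ell)$, are smooth in $\ell$, so $H(\ell) = w'(0;\ell)^2/(2\ell^2)$ is $C^\infty$ on $(0,\ell_c]$ and has bounded derivative on every compact subinterval $[\delta_0,\ell_c]$.

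\textbf{Combining the regimes.} $H$ is continuous at $\ell_c$: its left-hand limit equals $\kappa\sin\alpha\, V_0$, using $G(\ell_c)=0$ (from $u'(a)=0$ at $\ell=\ell_c$) together with \eqref{eq:GHV}, and this matches the constant value assumed for $\ell \ge \ell_c$. Pasting the Lipschitz bound on $[\delta_0,\ell_c]$ with constancy on $[\ell_c,\infty)$ yields the desired bound on $[\delta_0,\infty)$, from which the bounds on $G$ and $F$ are immediate. The main conceptual point is the overdetermined-problem argument that freezes the support at length $\ell_c$ once $\ell$ exceeds it; once this is understood, the second regime is routine smooth dependence of ODE solutions on parameters, and the assembly at $\ell_c$ is painless.
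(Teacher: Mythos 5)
Your proof is correct, but it takes a genuinely different route from the paper's. The paper never rescales to a fixed domain or invokes smooth dependence on parameters; it argues entirely by maximum-principle comparisons on the physical intervals $(0,\ell_0)$ and $(0,\ell_0+\eta)$. Its key intermediate step is the Lipschitz estimate $0\le\lambda(\ell_0)-\lambda(\ell_0+\eta)\le C\eta$ for the Lagrange multiplier (the claim \eqref{claim1}), obtained by inserting an auxiliary solution with the ``wrong'' multiplier on the longer interval and exploiting the volume constraint together with an explicit barrier; the endpoint-derivative bounds then follow by comparing the two minimizers directly, and $G$ and $H$ are handled by symmetric arguments rather than through the identity \eqref{eq:GHV}. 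Your route --- reduce $G$ to $H$ via \eqref{eq:GHV} (legitimate, since the identity holds for all $\ell$, with $G=0$ and $u'(a')=0$ when $\ell\ge\ell_c$), note constancy of $H$ on $[\ell_c,\infty)$, and get $C^\infty$ dependence of $\lambda(\ell)$ and of the rescaled profile on $(0,\ell_c]$ from the implicit function theorem (your transversality check $\int z>0$ is exactly what is needed) --- is sound and in fact yields more than the lemma asserts, namely smoothness of $H$ up to $\ell_c$. What the paper's more elementary route buys is that the multiplier estimate \eqref{claim1} is reused verbatim in the proof of the non-degeneracy Lemma \ref{lem:ndgH} (your smoothness of $\lambda(\ell)$ would supply it too, but you should record it explicitly), and that the comparison argument does not lean on one-dimensional ODE structure. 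Two points in your write-up deserve an explicit citation rather than a heuristic: the fact that for $\ell\ge\ell_c$ the positivity set always has length exactly $\ell_c$ (Proposition \ref{prop:obstacle}(iv) plus translation invariance, which is cleaner than the ``overdetermined system'' reasoning), and the fact that for $\ell\le\ell_c$ the obstacle minimizer coincides with the solution of the unconstrained linear problem \eqref{eq:obu} (Proposition \ref{prop:obu}).
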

 
\begin{lemma}[Non-degeneracy]\label{lem:ndgH}
For all $\delta_0>0$ there exists $c>0$ such that
\begin{equation}\label{eq:ndgH}  
H(\ell_0) -  H(\ell_0+\eta)\geq c\eta
\end{equation}
for all $\ell_0\leq \ell_c-\delta_0$ and
 $\eta$ such that $\ell_0+\eta \leq \ell_c$
 \end{lemma}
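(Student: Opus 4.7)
The plan is to show that $H$ is $C^1$ on $(0,\ell_c]$, that $H'(\ell)<0$ strictly for $\ell\in(0,\ell_c)$, and then to obtain the claimed bound by integrating $|H'|$ with a case split depending on how close $\ell_0+\eta$ is to $\ell_c$. For $\ell\le\ell_c$ the obstacle constraint in \eqref{eq:obmin} is inactive (Proposition~\ref{prop:obstacle}(iv)), so $u(\cdot;\ell)$ solves a standard linear Dirichlet BVP with volume constraint, and the implicit function theorem gives smooth (indeed real-analytic) dependence of $(u,\lambda)$ on $\ell$.

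Translating so that the support is $(-\ell,0)$, I would set $v:=\partial_\ell u$ and differentiate the equation, the boundary conditions and the volume constraint in $\ell$ to obtain
\[
-v''+\kappa\cos\alpha\,v=\lambda'(\ell),\qquad v(0)=0,\quad v(-\ell)=p(\ell):=\sqrt{2G(\ell)},\quad \int_{-\ell}^{0}v\,dx=0.
\]
Then $H'(\ell)=u_x(0,\ell)\,v_x(0,\ell)$, and since $H-G=\kappa\sin\alpha\,V_0$ one has $|u_x(0)|\ge\sqrt{2\kappa\sin\alpha\,V_0}>0$; the task thus reduces to bounding $v_x(0)$ uniformly from below.

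The main obstacle is the strict inequality $v_x(0)>0$ for $\ell\in(0,\ell_c)$. Writing $\mu:=\sqrt{\kappa\cos\alpha}$, the function $v-\lambda'/\mu^2$ is a linear combination of $e^{\pm\mu x}$, so $v$ has at most two zeros on $[-\ell,0]$. Combining $v(0)=0$, $v(-\ell)=p>0$ and $\int v=0$ (which forces $v$ to change sign), $v$ has exactly two zeros, at $0$ and at some $x_0\in(-\ell,0)$, with $v<0$ on $(x_0,0)$; hence $v_x(0)\ge 0$. For the strict inequality, assume $v_x(0)=0$: then $v(0)=v_x(0)=0$ together with the ODE uniquely determines $v(x)=(\lambda'/\mu^2)(1-\cosh(\mu x))$. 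The requirement $v(-\ell)=p>0$ then forces $\lambda'<0$, but one computes $\int v=(\lambda'/\mu^2)(\ell-\sinh(\mu\ell)/\mu)>0$ (using $\sinh s>s$ for $s>0$), contradicting $\int v=0$.

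To conclude, $|H'|$ is continuous and strictly positive on $(0,\ell_c)$, vanishes at $\ell_c$ (because $p(\ell_c)=0$), and blows up as $\ell\to 0^+$ (the small-$\ell$ parabolic expansion $H\sim 18V_0^2/\ell^4$ gives $|H'|\sim 72V_0^2/\ell^5$). Hence there exists $c_0=c_0(\delta_0)>0$ with $|H'(\ell)|\ge c_0$ on $(0,\ell_c-\delta_0/2]$. A two-case integration then finishes the argument: if $\ell_0+\eta\le\ell_c-\delta_0/2$, directly $H(\ell_0)-H(\ell_0+\eta)\ge c_0\eta$; otherwise, since $\ell_0\le\ell_c-\delta_0$ the subinterval $(\ell_0,\ell_c-\delta_0/2)$ has length at least $\delta_0/2$, so $H(\ell_0)-H(\ell_0+\eta)\ge c_0\delta_0/2\ge (c_0\delta_0/(2\ell_c))\,\eta$ using the trivial bound $\eta\le\ell_c$.
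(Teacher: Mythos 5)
Your argument is correct in its essentials but takes a genuinely different route from the paper's. The paper never differentiates in $\ell$: it compares the two solutions $u_1$, $u_2$ on $(-\ell_0,0)$ and $(-\ell_0-\eta,0)$ directly, uses the volume balance $\int h\,dx=\int_{a_2}^{a_1}u_2\,dx>0$ together with an explicit barrier to show that $h=u_1-u_2$ must exceed $c\eta$ somewhere near $x=0$, and concludes with a Hopf-type argument; its only regularity input is the Lipschitz bound of Lemma \ref{lem:H}. You instead linearize the volume-constrained boundary-value problem in $\ell$, obtain the exact identity $H'(\ell)=u_x(0)\,v_x(0)$ with $v=\partial_\ell u$, and reduce everything to the strict sign $v_x(0)>0$, which you establish by a clean zero-counting argument for solutions of $-v''+\kappa\cos\alpha\, v=\lambda'$ (the double-zero case $v(0)=v_x(0)=0$ being excluded by the volume constraint). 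This buys more than the lemma asks for: it shows $H\in C^1$ on $(0,\ell_c]$ with $H'(\ell_c^-)=0$, which explains the Lipschitz corner visible in Figure \ref{fig:2}, whereas the paper's softer argument only yields the one-sided quantitative bound. Two points deserve attention. First, the implicit-function-theorem step should be accompanied by the (easy) verification that the linearized problem $-w''+\kappa\cos\alpha\, w=\sigma$, $w(-\ell)=w(0)=0$, $\int w$ prescribed, is uniquely solvable for $(w,\sigma)$. Second, and more substantively, your uniform bound $|H'|\geq c_0$ on $(0,\ell_c-\delta_0/2]$ relies near $\ell=0$ on differentiating the asymptotic expansion $H\sim 18V_0^2/\ell^4$, which is not automatic; it can be made rigorous via the rescaling $x=\ell\xi$, $u=(V_0/\ell)U$, under which the problem becomes a regular perturbation of $-U''=\mathrm{const}$ and the expansion may legitimately be differentiated. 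To be fair, the paper's own proof has the same blind spot at small $\ell_0$, since its constants come from Lemma \ref{lem:H} and degenerate as $\ell_0\to 0$.
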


Next, we consider the following boundary value problem with volume constraint:
\begin{equation}\label{eq:obu}
\left\{
\begin{array}{l}
 -v'' = \lambda - v \kappa \cos\alpha +( x-b) \kappa \sin \alpha \quad  \mbox{ in } (a,b) \\[5pt]
\mbox{$v(a)=v(b)=0$ and $\int v\, dx =V_0$.}
\end{array}
\right.
\end{equation}
where $\lambda$ is a Lagrange multiplier for the volume constraint.
Then, we have:
\begin{proposition}\label{prop:obu}
Let $v$ be the solution of (\ref{eq:obu}).
Then there exists $\ell_c$ depending on $V_0$, $\alpha$ and $\kappa$ such that
 the followings are equivalent
\item[(i)] $v'(a)\geq 0$
\item[(ii)] $v\geq 0$ is also  the solution of (\ref{eq:obstacle})
\item[(ii)] $b-a\leq \ell_c$
\item Furthermore, $v'(a)>0$ if and only if $b-a<\ell_c$.
\end{proposition}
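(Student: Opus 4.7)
The plan is to deduce this proposition from the already-established properties of the obstacle problem in Proposition \ref{prop:obstacle}, together with Lemma \ref{lem:pos} which characterizes positivity in terms of $v'(a)$, and a one-line uniqueness argument for \eqref{eq:obu}.

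I would first note that \eqref{eq:obu} admits at most one solution pair $(v,\lambda)$: if $(v_1,\lambda_1)$ and $(v_2,\lambda_2)$ both solve it, then $w=v_1-v_2$ and $\mu=\lambda_1-\lambda_2$ satisfy $-w''+w\kappa\cos\alpha=\mu$ with $w(a)=w(b)=0$ and $\int_a^b w\,dx=0$. Writing $w=\mu w_1$ where $w_1$ solves the same ODE with right-hand side $1$ and zero Dirichlet data (so $w_1>0$ on $(a,b)$ by the maximum principle), the volume constraint forces $\mu=0$ and hence $w\equiv0$.

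Next I would prove (i) $\Leftrightarrow$ (ii). Lemma \ref{lem:pos} applies directly to $v$ since \eqref{eq:obu} is exactly \eqref{eq:motionc2} (with $b(t)$ replaced by $b$), and it yields: $v\geq 0$ on $(a,b)$ if and only if $v'(a)\geq 0$. If $v'(a)\geq 0$ then $v\geq 0$, so the obstacle in \eqref{eq:obmin} is inactive and $v$ solves \eqref{eq:obmin}; by uniqueness in Proposition \ref{prop:obstacle}(ii), $v$ coincides with the obstacle solution of \eqref{eq:obstacle}. Conversely, if $v$ solves \eqref{eq:obstacle}, then in particular $v\geq 0$ on $(a,b)$, so again Lemma \ref{lem:pos} gives $v'(a)\geq 0$.

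For (ii) $\Leftrightarrow$ (iii), I would invoke Proposition \ref{prop:obstacle}(iv) directly. If $b-a\leq \ell_c$, the obstacle solution $u$ satisfies $\{u>0\}=(a,b)$, so $u$ is a classical positive solution of the PDE part of \eqref{eq:obmin} with the same volume constraint and zero boundary data; hence $u$ solves \eqref{eq:obu}, and by the uniqueness established above $v=u\geq 0$, proving (ii). Conversely, if (ii) holds, then $v$ is the obstacle solution with $\{v>0\}=(a,b)$, which by Proposition \ref{prop:obstacle}(iv) forces $b-a\leq \ell_c$.

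Finally, for the strict statement, I would use Proposition \ref{prop:obstacle}(iii): the quantity $u'(a')$ depends only on $b-a$ and is monotone. At $b-a=\ell_c$ we have $a'=a$ and the obstacle just ceases to be inactive, forcing $u'(a)=0$; combined with (iii) of Proposition~\ref{prop:obstacle} (monotonicity of $\ell\mapsto u'(a')$), we get $v'(a)=u'(a')>0$ precisely when $b-a<\ell_c$. The only mildly delicate point is checking the limiting relation $u'(a)=0$ at $b-a=\ell_c$, which I expect to handle by continuity of the obstacle solution with respect to the interval length together with the characterization of $\ell_c$ from Proposition \ref{prop:obstacle}(iv); this is the main (and rather mild) obstacle in the argument.
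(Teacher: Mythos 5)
Your proposal is correct and follows essentially the same route as the paper: the equivalences are deduced from Lemma \ref{lem:pos} and Proposition \ref{prop:obstacle}(ii),(iv), and the strict statement from the ``furthermore'' clause of Proposition \ref{prop:obstacle}(iv). The only difference is that you make explicit the uniqueness of the solution of \eqref{eq:obu} (a correct and welcome addition, since the paper uses it implicitly in the step $b-a\leq \ell_c \Rightarrow v'(a)\geq 0$, where it identifies $v$ with the obstacle solution $u$).
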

This proposition implies Proposition \ref{prop:eq}.

The remainder of this section is devoted to the proof of these results.

\begin{proof}[Proof of Proposition \ref{prop:obstacle}]
\item[(i)] Let $(c,d)$ be a connected component of $\{u>0\}$, then
\begin{equation}\label{eq:uu1}
  -u'' = \lambda + (x-b) \kappa \sin \alpha - u \kappa \cos\alpha 
\end{equation}
in $(c,d)$.
Multiplying (\ref{eq:uu1}) by $u'$ and integrating over $(c,d)$ yields:
$$ u'(d)^2 = u'(c)^2 + 2 \kappa \sin\alpha \int_c^d u\, dx > 0.$$
If $d\neq b$, then the regularity of $u$ implies $u'(d)^2 =0$, a contradiction. So $d=b$, $|u'(b)| >0$ and $(c,b)$ is the only connected component of $\{u>0\}$. We also have the following formula:
\begin{equation}\label{eq:a'}
 u'(b)^2 - u'(a')^2 = 2 \kappa \sin\alpha V_0 
 \end{equation}

 \medskip 
 
\item[(ii)] To prove the uniqueness of the solution of (\ref{eq:obmin}), we assume that $u_1$, $u_2$ are two solutions of (\ref{eq:obmin}) (with $\lambda_1$ and $
\lambda_2$ respectively).
We know that $\{u_i>0\}=(a'_i,b)$ and we can assume (without loss of generality) that $a\leq a_2'\leq a'_1$.

If $v = u_1-u_2$, then
$$ -v'' = \lambda_1-\lambda_2 - v \kappa \cos\alpha \quad \mbox{ in } (a'_1,b).$$
Furthermore, $v(a'_1)\leq 0$, $v(b)=0$ and 
$\int_{a'_1}^b v \, dx > 0$. So  $v$ reaches its positive maximum value somewhere in $(a_1',b)$. We deduce
$$ \lambda_1\geq \lambda_2.$$
 
Next, we note that since 
$$  -u_1'' \geq  \lambda_1 + (x-b) \kappa \sin \alpha - u_1 \kappa \cos\alpha\quad \mbox{ in } \{a'_2,b\},$$ 
we also have 
$$ -v'' \geq  \lambda_1-\lambda_2 - v \kappa \cos\alpha \geq - v \kappa \cos\alpha \quad \mbox{ in } (a'_2,b),$$
with $v(a'_2)=v(b)=0$ and $\int _{a'_2}^{b} v\, dx=0$. 
If $v\neq 0$, then $v$ has a (strictly) negative minimum value in $ (a'_2,b)$, a contradiction.
We deduce $v=0$ and so $u_1=u_2$.  

\medskip

\item[(iii)] Equation (\ref{eq:obmin}) is invariant by translation, and thus the uniqueness obtained in (ii) implies that $u(\cdot +b)$ and $\lambda$ only depends on $\ell=(b-a)$ (in particular, $u'(a)$ and $-u'(b)$ only depends on $\ell$).

The proof of (ii) above also implies that $\lambda$ is a decreasing function of $(b-a)$. 
We now take $a_2<a_1<b$ and let
 $u_1$ (resp. $u_2$) be the solution of (\ref{eq:obmin}) on the interval $(a_1,b)$ (resp. $(a_2,b)$).
The function $v=u_1-u_2$ satisfies
$$v(a_1)\leq 0, \quad v(b)=0, \mbox{ and } \int_{a_1}^b v \geq 0$$ 
so there exists a point $x_0\in [a_1,b)$ such that $v(x_0)=0$. 
Since
$$ -v'' + v \kappa \cos\alpha= \lambda_1-\lambda_2 \geq 0  \quad \mbox{ in } (x_0,b),$$
the maximum principle gives $ v \geq 0$ in $(x_0,b)$. In particular, $-v'(b)\geq 0$ and so
$$ -u_1'(b) \geq -u_2'(b).$$

Finally, if $u_2'(a_2)=0$, then $ u_2'(a_2) \leq u_1'(a_1)$, while if $ u_2'(a_2) >0$, then (\ref{eq:a'}) implies
$$ u_2'(a_2)^2 =u_2'(b)^2 - 2 \kappa \sin\alpha V_0 
$$
and 
$$ u_1'(a'_1)^2 =u_1'(b)^2 - 2 \kappa \sin\alpha V_0 \geq u_2'(b)^2 - 2 \kappa \sin\alpha V_0  = u_2'(a_2)^2
$$ 
hence $a'_1=a_1$ and $u_1'(a_1)\geq u_2'(a_2)$.

\medskip

\item[(iv)]  Note that if $\{u>0\}\neq (a,b)$, then $u$ is the unique solution of (\ref{eq:obmin}) in any interval larger than $(a,b)$.
So we can define 
$$\ell_c:= \min \{ \ell\; ; \mbox{ the solution of (\ref{eq:obmin}) in $(0,\ell)$ satisfies } \{u>0\}\neq (0,\ell) \}$$
We only need to show that this set is non empty (that is $\ell_c<\infty$), and that this is equivalent to $u'(a)\neq 0$.

Assume that $ \{u>0\}\neq (a,b)$. Then
$$ -u''\leq \lambda +(x-b) \kappa \sin\alpha\qquad \mbox{ in } (a,b)
$$
and so
$$ -u'(b)+u'(a) \leq  \lambda(b-a) - \frac{1}{2}(b-a)^2  \kappa \sin\alpha $$
Since $u'(a)\geq 0$ and $u'(b)\leq 0$, we deduce
$$(b-a) \kappa \sin\alpha \leq \lambda$$
Since $\lambda$ is a decreasing function of $(b-a)$, this implies that  $(b-a)\leq C$ for some constant $C$.

\medskip

Finally, we note that if $b-a<\ell_c$, then $u'(a)>0$, while the regularity of $u$ implies $u'(a)=0$ when  $b-a\geq \ell_c$.

\end{proof}

We now turn to the proof of the Lipschitz regularity of the functions $G$ and $H$: 
\begin{proof}[Proof of Lemma \ref{lem:H}] 
Note that $H$ and $G$ are constant for $\ell\geq \ell_c$, so we can always assume that 
\begin{equation}\label{eq:ellceta}
\delta_0\leq \ell_0\leq\ell_0+\eta\leq \ell_c.
\end{equation}

We now consider the solution $u$ of \eqref{eq:obmin} with $a=0$ and $b = \ell_0$ and
we denote by $\lambda(\ell_0)$ the Lagrange multiplier appearing in \eqref{eq:obmin}. We recall that $\lambda$ is a decreasing function of $\ell_0$ and we  claim that 
\begin{equation}\label{claim1}
\lambda(\ell_0)-\lambda(\ell_0+\eta) \leq C\eta
\end{equation} 
for some constant $C$ depending on $\delta_0$. 
To prove this, we consider $v$ solution of the same equation as $u$ (with the same $\lambda$) but on the interval $(0, \ell_0+\eta)$ instead of $(0,\ell_0)$:
$$ -v''= \lambda(\ell_0) + (x-\ell_0) \kappa \sin \alpha - v \kappa \cos\alpha\quad \mbox{ in } (0,\ell_0+\eta), \quad v(0)=v(\ell_0+\eta)=0.
$$
We note that $v\geq u$ and that the function $h=v-u$ solves
$$
h\kappa\cos\alpha-h'' = 0 \hbox{ on } (0,\ell_0).
$$
In particular, on the interval $(0,\ell_0)$,  $h$ must take  its positive maximum at $x=\ell_0$. Now on $(\ell_0,\ell_0+\eta)$ we have $h=v\leq C_1 \eta$  for some $C_1>0$ with $C_1$ depending on $\delta_0$. We conclude that $h \leq C_1\eta$ in $(0,\ell_0+\eta)$ and thus  
\begin{equation}\label{eq:Vv}
\int v \, dx \leq V_0+C_1' \eta
\end{equation}
for some constant $C'_1$ depending on $\delta_0$ and $\ell_c$ (recall that $\ell_0+\eta\leq \ell_c$).
\vspace{10pt}

Next, we consider $w$ the solution of \eqref{eq:obmin} with $a=0$ and $b = \ell_0+\eta$ (and so  $\lambda=\lambda(\ell_0+\eta)$).
The function $g=v-w$ solves
$$ 
 g \kappa \cos\alpha-g'' 
=\sigma 
$$
 in $(0,\ell_0+\eta)$, with $g(0)=g(\ell_0+\eta)=0$, where (since $\lambda$ is decreasing)
 $$ \sigma =  \lambda(\ell_0)-\lambda(\ell_0+\eta) + \eta  \kappa \sin \alpha > 0.$$

Finally, we note that the solution of $-\phi''=\mu \sigma$ in $(0,\ell_0+\eta)$ with zero boundary condition satisfies $\phi\kappa\cos\alpha-\phi'' \leq \sigma  $, if $\mu$ is small enough. 
The comparison principle thus implies that $g \geq \phi$ and so
$$\int g dx = \int v\, dx - V_0 \geq \int \phi\, dx \geq C \mu \sigma$$
 for some small $\mu>0$.
Equation \eqref{eq:Vv} implies
$$ C_1' \eta \geq C\mu\sigma$$
and so
$$ \lambda(\ell_0)-\lambda(\ell_0+\eta) \leq C\eta$$
which proves (\ref{claim1}).

\vspace{10pt}

We can now prove the Lipschitz bounds for the functions $H$ and $G$ by
comparing $u$ (solution of \eqref{eq:obmin}  on $(0,\ell_0)$) and $w$   (solution of \eqref{eq:obmin}  on $(0,\ell_0+\eta)$):
The function $q(x)=u(x)-w(x)$ solves
$$ q\kappa\cos\alpha  -q'' = \lambda(\ell_0)-\lambda(\ell_0+\eta) +\eta \kappa\sin\alpha , \quad \mbox{ in } (0,\ell_0)$$
with $q(0)=0$ and $q(\ell_0)=-w(\ell_0)\leq 0$.
So \eqref{claim1} implies
$$q \kappa\cos\alpha  -q'' \leq C\eta  \quad \mbox{ in } (0,\ell_0), \quad  \mbox{ with } q(0)=0 \mbox{ and } q(\ell_0)\leq 0.$$
We deduce
$ q'(0)\leq C\eta$ and so 
$$ u'(0)\leq w'(0)+C\eta$$
which proves our statement for $G$. The proof for $H$ is similar (fixing $b=0$ and choosing $a=-\ell_0$ or $a=-\ell_0-\eta$).
\end{proof}

The proof of Lemma \ref{lem:ndgH} follows from similar argument:
\begin{proof}[Proof of Lemma \ref{lem:ndgH}]
We recall that $\ell_0\leq \ell_c-\delta_0$ and  
we fix $b=0$, $a_2 = -(\ell_0+\eta)$ and $a_1 = -\ell_0$.
We note that it is enough to prove \eqref{eq:ndgH} for $\eta\leq (\ell_c-\ell_0)/2$, so that we can assume that $\ell_0+\eta \leq \ell_c-\delta_0/2$.

We now consider
 $u_1$ (resp. $u_2$) the solution of (\ref{eq:obmin}) on the interval $(a_1,0)$ (resp. $(a_2,0)$).
The function $h=u_1-u_2$ satisfies in particular
$$h(a_1)\leq 0, \quad h(0)=0 $$
and we are going to show that $\max_{(a_1,0)} h \geq c\eta$ for some $c>0$.
\medskip

First of all, the volume constraint   implies that 
\begin{equation}\label{balance}
 \int_{a_1}^{b} h \, dx = \int_{a_2}^{a_1}u_2\, dx>0
\end{equation}
Furthermore, since $\ell_0+\eta< \ell_c-\delta_0/2$ the monotonicity of $G$ implies
$$u_2'(a_2)>\sqrt{2G(\ell_c-\delta_0/2)}>0$$
and so $u_2(a_1)\geq c\eta$ for some $c>0$.
Also, Lemma~\ref{lem:H} gives
\begin{eqnarray*}
 h'(a_1) & = & u_1'(a_1) -u_2'(a_1) \\
 & = & G(\ell_0) -   u_2'(a_1)\\
 & \leq & G(\ell_0+\eta) + C\eta-   u_2'(a_1)\\
 & \leq &  C\eta+ u_2'(a_2)-   u_2'(a_1)\\
 & \leq & C\eta
 \end{eqnarray*}
 We deduce that there exists $c>0$ such that 
\begin{equation}\label{eq:ha1}
h(a_1)=-u_2(a_1)\leq -c\eta \quad \mbox{ and } h'(a_1)\leq c\eta.
\end{equation}
Finally, we have $ -h''+h \kappa \cos\alpha  = \lambda_1-\lambda_2$ in  $(a_1,0),$ and so \eqref{claim1} implies
\begin{equation}\label{eq:ha10} 
 0\leq -h''+h \kappa \cos\alpha   \leq C\eta.
\end{equation}

Consider now $g$ solution of $  -g''+g \kappa \cos\alpha=C$ on $(a_1,0)$ with $g(a_1)=-c$ and $g'(a_1)=c$. 
Then  (\ref{eq:ha1}) and \eqref{eq:ha10} imply that $h\leq \eta g$ in $(a_1,0)$.

Let now $y_0\in(a_1,0)$ be such that $g<0$ in $(a_1,y_0)$ and $g(y_0)=0$ ($y_0$ only depends on the constant $c$ and $C$ appearing in \eqref{eq:ha1} and \eqref{eq:ha10}), 
then
\begin{equation}\label{counterbalance}
\int_{a_1}^{y_0} h\, dx  \leq \eta \int_{a_1}^{y_0} g\, dx  \leq -c\eta
\end{equation}
with $c>0$.

Equations \eqref{balance} and \eqref{counterbalance} imply that 
$$\int_{y_0}^{0} h(x) dx\geq c\eta $$ 
and so  there exists a point $z_0\in (y_0,0)$ where $h(z_0) \geq c\eta$. 
A Hopf's lemma type argument (using \eqref{eq:ha10}) now yields $|h'(0)| >c \eta$
and so 
$$
|u_1'(0)| > |u_2'(0)| +c \eta.
$$
The result follows. 
 \end{proof}

\begin{proof}[Proof of Proposition \ref{prop:obu}]
\item[\bf (i)$\Rightarrow$(ii)]
If $v'(a)\geq 0$, then Lemma \ref{lem:pos} implies that $v>0$ in $(a,b)$, and so $v$ is also the solution of (\ref{eq:obmin}), and thus of (\ref{eq:obstacle}).

\item[\bf (ii)$\Rightarrow$(iii)]
If $v$ is solution of (\ref{eq:obu}), then $v\geq 0$ in $(a,b)$, and since $v$ cannot be zero a an interval of $(a,b)$, we must have $\{v>0\}=(a,b)$. Proposition \ref{prop:obstacle} (iv) gives $b-a\leq \ell_c$.

\item[\bf (iii)$\Rightarrow$(i)]
Finally, if  $b-a\leq \ell_c$, then the solution $u$ of (\ref{eq:obstacle}) satisfies $\{u>0\}=(a,b)$, and is thus also a solution of (\ref{eq:obu}). If follows that $v'(a)=u'(a)\geq 0$.

\end{proof}

\section{Proof of Theorem \ref{thm:ex}}
\subsection{Uniqueness and Comparison principle}
In this section, we prove the comparison principle, Proposition \ref{prop:cp}, which also implies the uniqueness part of Theorem \ref{thm:ex}:

\begin{proof}[Proof of Proposition \ref{prop:cp}]
Thanks to Proposition \ref{prop:eq}, we can assume that $u$ is a solution in the sense of Definition \ref{def:3}.
Using the notations of (\ref{not:GH}), we can rewrite \eqref{eq:mmotionc1'} as follows
\begin{equation}\label{eq:GH}
\left\{
\begin{array}{ll}
\min\{ a'(t)+G(\ell(t))-\beta(a(t)),\ell_c-\ell(t)\}=0 \\[5pt]
b'(t) = H(\ell(t))-\beta(b(t)) ,
\end{array}
\right.
\end{equation}
with $\ell\mapsto H(\ell)$ and $\ell\mapsto G(\ell)$ monotone decreasing.
We now define 
$$ f(t) =\max\{a_1(t)-a_2(t),b_1(t)-b_2(t)\}.$$
Since the functions $a_i(t)$ and $b_i(t)$ are in $W^{1,\infty}_{loc}(0,\infty)$, $f(t)$ is in $W^{1,\infty}_{loc}(0,\infty)$ and we have
$$ f'(t)=
\left\{
\begin{array}{ll}
a'_1(t)-a'_2(t) & \mbox{ if } a_1(t)-a_2(t)>b_1(t)-b_2(t) \\[5pt]
b'_1(t)-b'_2(t) & \mbox{ if } a_1(t)-a_2(t)< b_1(t)-b_2(t) \\[5pt]
b'_1(t)-b'_2(t) = a'_1(t)-a'_2(t)  & \mbox{ if } a_1(t)-a_2(t)= b_1(t)-b_2(t) 
\end{array}
\right. \qquad \mbox{a.e. } t\geq 0
$$
and so 
$$ f'(t)=
\left\{
\begin{array}{ll}
a'_1(t)-a'_2(t) & \mbox{ if } \ell_2 (t)>\ell_1(t) \\[5pt]
b'_1(t)-b'_2(t) & \mbox{ if } \ell_2 (t)\leq \ell_1(t)
\end{array}
\right.
$$
When $\ell_2 (t)>\ell_1(t)$, we have in particular $\ell_1(t)<\ell_c$, and so (\ref{eq:GH}) gives
$$ a'_1(t) = -G(\ell_1(t))+\beta(a_1(t)), \quad a'_2(t) \geq -G(\ell_2(t))+\beta(a_2(t)).$$
We deduce
$$
 f'(t)\leq 
\left\{
\begin{array}{ll}
G(\ell_2(t))-G(\ell_1(t)) +\beta(a_1(t))-\beta(a_2(t))& \mbox{ if } \ell_2 (t)>\ell_1(t) \\[5pt]
H(\ell_1(t))-H(\ell_2(t)) -\beta(b_1(t))+\beta(b_2(t)) & \mbox{ if } \ell_2 (t)\leq \ell_1(t)
\end{array}
\right.
$$
and the monotonicity of $G$ and $H$ implies 
$$
 f'(t)\leq 
\left\{
\begin{array}{ll}
K|a_1(t)-a_2(t)| & \mbox{ if } a_1(t)-a_2(t)>b_1(t)-b_2(t) \\[5pt]
K|b_1(t)-b_2(t)| & \mbox{ if } a_1(t)-a_2(t)\leq b_1(t)-b_2(t) 
\end{array}
\right.
$$
with $K=||\beta'||_{L^\infty}$.
We thus have 
$$ f'(t) \leq K |f(t)| \quad \mbox{  a.e.  $t\geq 0$.}$$
In particular, $f_+'(t)\leq K f_+(t)$ and so $f_+(t) \leq f_+(0) e^{Kt}$,
so if $f(0)\leq 0$, then $f(t)\leq 0$ a.e. $t\geq 0$.

\vspace{10pt}

We also have $f_-'(t)\geq -Kf_-'(t)$ and so $f_-(t)\geq f_-(0)e^{-Kt}$, so if $f(0)< 0$, then $f(t)< 0$ a.e. $t\geq 0$. The result follows.
\end{proof}

\subsection{Existence of a solution}\label{sec:dt}

Observe that, when $\beta$ does not depend on $x$,  formulation (\ref{eq:motionl}) implies that the problem can be reduced to solving first an equation for $\ell(t)$, of the form
\begin{equation}\label{eq:eql}
 \max\{ \ell'(t)-[F(\ell(t))-2\beta],\ell(t)-\ell_c\}=0\}
 \end{equation}
 with $F$ defined by \eqref{eq:Fdef},
and then an equation for $b(t)$ of the form
\begin{equation}\label{eq:eqb}
 b'(t)=H(\ell (t))-\beta.
 \end{equation}
When $\beta$ depends on $x$ however, one cannot decouple the equations for $a$ and $b$.  We will thus use a discrete scheme to prove the existence of solutions in this general framework.

\subsubsection{Discrete-time scheme}
To prove the existence of a solution for general coefficient $\beta$, we introduce a simple discrete time scheme (simpler than the gradient flow scheme described in the introduction):
For $h >0$ small,  we describe the evolution of the drop as follows:

Assume that the support of the drop at time $t_n=nh $ is the interval $(a_n,b_n)$ and denote by $u_n(x)$ the corresponding solution of (\ref{eq:obstacle}).
We define 
$$
\begin{array}{l} 
a_{n+1/2} = a_{n} + \Big[\beta(a_n)-  \frac{1}{2} |u'_n(a_n) | ^2\Big] h \\[5pt]
b_{n+1/2} = b_n + \Big[\frac{1}{2} |u'_n(b_n) | ^2 - \beta(b_n) \Big] h 
\end{array}
$$
Next, we define $u_{n+1}$ as the solution of the obstacle problem (\ref{eq:obstacle}) with $a=a_{n+1/2}$ and $b=b_{n+1/2}$.
Proposition \ref{prop:obstacle} (i) implies that there exists 
 $a_{n+1}$ and $b_{n+1}=b_{n+1/2}$ such that
$$ \{u_{n+1}>0\} = (a_{n+1},b_{n+1}).$$

This scheme defines a sequence of functions $\{u_n(x)\}_{n\in\NN}$ and a sequence of intervals $\{(a_n,b_n)\}_{n\in \NN}$.
We then define $u^h(x,t)$, $a^h(t)$ and $b^h(t)$ continuous piecewise linear functions such that
$$ u^h(x,nh) := u_n(x)$$
$$ a^h(nh) := a_n,\;  b^h(nh) := b_n.$$
Our goal is now to pass to the limit $h\to0$.
\medskip

First, we observe that since $ b_{n+1}=b_{n+1/2}$, we have
\begin{equation}\label{eq:md1}
\frac{b_{n+1} -b_n }{h } = \frac{1}{2} |u'_n(b_n) | ^2 - \beta(b_n) .
\end{equation}
On the other hand,  we only have $a_{n+1}\geq a_{n+1/2}$ and if $a_{n+1}>a_{n+1/2}$, then
$ u'_{n+1}(a_{n+1}) = 0 .$ So we can write
$$
\frac{a_{n+1} - a_{n}}{h } \geq \beta(a_n)-  \frac{1}{2} |u'_n(a_n) | ^2 ,\qquad
\mbox{ with equality if $u'_{n+1}(a_{n+1}) >0 $,}$$ 
which implies 
\begin{equation}\label{eq:md2}
\min\left\{ \frac{a_{n+1} - a_{n}}{h }+\frac{1}{2} |u'_n(a_n) | ^2  - \beta(a_n),   u'_{n+1}(a_{n+1}) \right\} =0.
\end{equation}
Finally, using Proposition \ref{prop:obu}, we can also write: 
\begin{equation}\label{eq:md2'}
\min\left\{ \frac{a_{n+1} - a_{n}}{h }+\frac{1}{2} |u'_n(a_n) | ^2  - \beta(a_n),  a_n-b_n+\ell_c   \right\} =0.
\end{equation}

\subsubsection{Limit $h\to0$}
We now study the limit $h \rightarrow 0$ of the discrete model introduced above.
Note that $u_n$ satisfies, for some $\lambda_n$,
\begin{equation} \label{eq:un}
\left\{
\begin{array}{ll}
-u''_{n} = \lambda_n - u_n \kappa \cos\alpha + (x-b_n) \kappa \sin \alpha \quad& \mbox{ in } (a_{n},b_{n}),\\[5pt]
u_n(a_n)=u_n(b_n)=0 ,\\[5pt]
\int u_n\, dx = V_0.
\end{array}
\right.
\end{equation}

We then have the following proposition:
\begin{proposition}\label{prop:properties}
For all $n$, the following holds:
\begin{itemize}
\item[(i)] $|u'_n(a_n)|$ and $|u'_n(b_n)|$ are uniformly bounded. More precisely, there exists a constant $M$ depending only on $\kappa$, $\alpha$, $V_0$ and the initial data such that
$$
|u_n'(a_n)|\leq |u_n'(b_n)| \leq M.
$$ 
\item[(ii)] There exist constants $\underline\ell>0$ and $\Lambda$ depending only on $\kappa$, $\alpha$, $V_0$ such that $b_n-a_n\geq \underline \ell$ and 
$|\lambda_n|\leq \Lambda$. 
\item[(iii)] $a^h(t)$ and $b^h(t)$  are uniformly Lipschitz continuous with respect to $h$ and $t$.
\end{itemize}
\end{proposition}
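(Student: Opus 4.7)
My plan is to deduce all three statements from a single uniform lower bound on the length $\ell_n := b_n - a_n$. Using (\ref{eq:md1}) together with the case analysis in (\ref{eq:md2'}) and the identities $\tfrac{1}{2}|u_n'(a_n)|^2 = G(\ell_n)$, $\tfrac{1}{2}|u_n'(b_n)|^2 = H(\ell_n)$ coming from Proposition \ref{prop:obstacle}(iii) and (\ref{not:GH}), I will first derive the clean one-step recursion
\[
\ell_{n+1} = \min\Big\{\ell_c,\; \ell_n + h\big[F(\ell_n) - \beta(a_n) - \beta(b_n)\big]\Big\},
\]
where $F = G+H$ is as in (\ref{eq:Fdef}). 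The two alternatives in the min correspond to whether or not the obstacle at the left endpoint is activated in the step $n \to n+1$.

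For the lower bound on $\ell_n$, I will exploit the fact that $F$ is continuous, strictly decreasing on $(0,\ell_c]$, and blows up as $\ell \to 0^+$. Choose $\ell_* \in (0,\ell_c)$ such that $F(\ell_*) \geq 2\|\beta\|_\infty + 1$. The recursion then yields two dichotomous bounds: if $\ell_n \leq \ell_*$ then $\ell_{n+1} \geq \ell_n + h$, while if $\ell_n > \ell_*$ then $\ell_{n+1} \geq \ell_n - 2h\|\beta\|_\infty$. Assuming $h$ small enough (say $h \leq \ell_*/(4\|\beta\|_\infty)$), a straightforward induction on $n$ will give
\[
\ell_n \geq \underline\ell := \min(\ell_0, \ell_*/2) \qquad \text{for all } n \geq 0.
\]
I expect this to be the main obstacle of the proof. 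Because the present scheme is not a gradient flow (in contrast with the JKO scheme underlying Proposition \ref{barrier}), there is no energy dissipation to lean on; the lower bound must come purely from the confining geometry of the recursion near small $\ell$, balanced against the obstacle ceiling $\ell_c$ from above.

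Once $\ell_n \in [\underline\ell,\ell_c]$ is established, the remaining statements follow cleanly. For (i), the monotonicity of $H$ and $G$ (Proposition \ref{prop:obstacle}(iii)) yields $|u_n'(b_n)|^2 = 2H(\ell_n) \leq 2H(\underline\ell) =: M^2$, while $|u_n'(a_n)| \leq |u_n'(b_n)|$ follows from (\ref{eq:GHV}). For the bound on $\lambda_n$ in (ii), I note that $\lambda$ depends continuously (Lipschitz by Lemma \ref{lem:H}) and monotonically (Proposition \ref{prop:obstacle}(iii)) on $\ell$, hence is bounded on the compact interval $[\underline\ell, \ell_c]$. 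Finally, (iii) is a direct consequence of (\ref{eq:md1}) and the case analysis of (\ref{eq:md2'}): the bound $M$ controls $|b_{n+1}-b_n|/h$, while in the obstacle-triggered case one computes $a_{n+1}-a_n = (\ell_n - \ell_c) + h[H(\ell_n) - \beta(b_n)]$, which is still $O(h)$ by the same bound.
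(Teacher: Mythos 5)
Your proof is correct and is essentially the paper's argument run in the opposite order: the paper first traps $|u_n'(b_n)|$ (observing that if $|u_n'(b_n)|\geq 2\sup\sqrt{\beta}$ then $\ell_n$ increases and hence $H(\ell_n)$ decreases) and then reads off the lower bound on $\ell_n$, whereas you establish the lower bound $\ell_n\geq\underline{\ell}$ first from the same recursion and the blow-up of $F$ near $0$, and then read off the gradient bound from the monotonicity of $H$; the mechanism is identical. The only point worth tightening is the last line of (iii): to see that $\ell_n-\ell_c=O(h)$ in the obstacle-triggered case you should invoke the trigger condition $\ell_n+h\left[F(\ell_n)-\beta(a_n)-\beta(b_n)\right]>\ell_c$ (or, as the paper does, simply bound $a_{n+1}-a_n$ from above by $b_{n+1}-b_n$ and from below by $a_{n+1/2}-a_n$), since the bound $M$ alone does not control that term.
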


Before proving this proposition, we show that it implies the existence of a solution (Theorem~\ref{thm:ex}):

\begin{proof}[Proof of Theorem \ref{thm:ex}]

Proposition \ref{prop:properties} implies that $a^h(t)$, $b^h(t)$ and $\ell^h(t)=b^h(t)-a^h(t)$ are  Lipschitz continuous  uniformly with respect to $h$ and that there exists $\underline \ell$ independent of $h$ such that
\begin{equation}\label{eq:llu} 
\underline \ell \leq \ell^h(t) \leq \ell_c \quad \forall t\geq 0.
\end{equation}
In particular there exists a subsequence $h\to0$ and some Lipschitz continuous functions $a(t)$ and $b(t)$ 
such that 
$$ a^h(t) \to a(t), \qquad b^h(t)\to b(t) \mbox{ uniformly with respect to $t\in [a,b]$}$$
for all $a<b\in \RR$.

Next, we note that for any  $t\geq 0$, $u^h(x,t)$ solves 
$$ 
-u_{xx}(\cdot,t) = \lambda(\ell^h(t)) - \kappa\cos\alpha u + (x-b^h(t))\kappa\sin\alpha,  \quad \mbox{ in } (a^h,b^h)
$$
and so  the function $v^h(x,t) = u^h(a^h(t)+x\ell^h(t),t)$ solves 
$$
-v_{xx}(\cdot,t) = {\ell^h(t)}^2\left[ \lambda(\ell^h(t)) - \kappa\cos\alpha v +\ell^h(t) (x-1)\kappa\sin\alpha\right],  \quad \mbox{ in } (0,1)
$$
with
$$ v^h(0,t)=v^h(1,t)=0.$$

In particular, it is readily seen that $x\mapsto v^h(x,t)$ is bounded in $C^2(0,1)$ uniformly with respect to $t$,
and that $v^h$ and $v^h_x$ are Lipschitz  continuous with respect to $t$, uniformly with respect to $h$. Finally, $v^h$ converges  locally uniformly (with respect to $x$ and $t$) to a function $v$ solution of 
$$
-v_{xx}(\cdot,t) = {\ell(t)}^2\left[ \lambda(\ell(t)) - \kappa\cos\alpha v +\ell(t) (x-1)\kappa\sin\alpha\right],  \quad \mbox{ in } (0,1)
\mbox{ with } v(0,t)=v(1,t)=0.$$

Writing  
$$u^h(x,t)=v^h\left(\frac{x-a^h(t)}{\ell^h(t)},t\right),$$ 
we deduce that $u^h$ converges  locally uniformly (with respect to $x$ and $t$) to a function $u$ solution of 
$$
-u_{xx}(\cdot,t) = \lambda(\ell(t)) - \kappa\cos\alpha u + (x-b(t))\kappa\sin\alpha,  \quad \mbox{ in } (a(t),b(t))$$
satisfying
$$u(a(t),t)=u(b(t),t)=0\qquad \mbox{ and }\quad \int u(\cdot,t) dx = V_0.
$$

Furthermore, $u^h_x(a^h(t),t)$ and $u^h_x(b^h(t),t)$ are Lipschitz continuous function in $t$, uniformly with respect to  $h$. 
We can thus pass to the limit in \eqref{eq:md1} and \eqref{eq:md2'}. For instance, we note that \eqref{eq:md1} implies
$$
{b^h}'(t)=\frac{b_{n+1} -b_n }{h } = \frac{1}{2} |u'_n(b_n) | ^2 - \beta(b_n) .
$$
for $t\in (nh,(n+1)h)$, and so the Lipschitz continuity in time implies 
$$
{b^h}'(t)= \frac{1}{2} |u^h_x(b^h(t),t) | ^2 - \beta(b^h(t)) +O(h) \mbox{ a.e. } t\geq 0 
$$
Passing to the limit $t\to 0$ and repeating this argument  with \eqref{eq:md2'} yields \eqref{eq:mmotionc1'}.
\end{proof}

\begin{proof}[Proof of Proposition \ref{prop:properties}] 
For the sake of clarity, we drop the index $n$ when no ambiguity is possible. 

\item[(i)] In view of Poposition \ref{prop:obstacle}, we only need to show that the length of the support $b-a$ cannot be too small:  First, if $u'(a)=0$, then Proposition \ref{prop:obstacle} (iv) implies that $b-a= \ell_c$,  and (\ref{eq:GHV}) gives
$$
\frac{1}{2}|u'(b)|^2= V_0\kappa \sin\alpha.
$$

\vspace{5pt}

Next, if $u'(a)\neq 0$, then (\ref{eq:md1}) and (\ref{eq:md2}) implies that the length of the interval $b-a$ at the next time step will be given by
$$ b-a + \left[\frac{u'(b)^2}{2}+\frac{u'(a)^2}{2}-\beta(a_n)-\beta(b_n)\right] \geq b-a + \left[\frac{u'(b)^2}{2}-\beta(a_n)-\beta(b_n)\right] $$
In particular, if $|u'(b)| \geq 2\sup\sqrt{ \beta}$, then the length of the interval increases and therefore $|u'(b)|$ decreases.
We deduce that if $u'(a)\neq 0$, then
$$|u'(a)|\leq  |u'(b)| \leq \max\{ 2\sup\sqrt\beta, |u_0'(b)|\}$$
and the result follows.

\vspace{10pt}

\item[(ii)]
The proof of (i) above clearly implies that $b-a$ is bounded below. 
Furthermore, integrating the equation satisfied by $u$ over $(a,b)$, we get:
\begin{eqnarray*}
\lambda (b-a) & =  & u'(a) -u'(b) + V_0\kappa\sin\alpha-\frac{1}{2} (b^2-a^2) \kappa \sin\alpha\\
& \leq & 2M + V_0\kappa\sin\alpha.
\end{eqnarray*}
The result follows.

\vspace{10pt}

\item[(iii)] The discrete motion law (\ref{eq:md1}) and (i) imply that there exists a constant $C>0$ such that
$$\left| \frac{b_{n+1}-b_n}{h}\right| \leq C \qquad \mbox{ for all } n\geq 0.$$
Similarly, if $|u_{n+1}'(a_{n+1})|\neq 0$ then  (\ref{eq:md2}) and (i) imply
$$\left| \frac{a_{n+1}-a_n}{h}\right| \leq C \qquad \mbox{ for all } n\geq 0.$$
Finally, if $u'_{n+1}(a_{n+1})= 0$, then  
$b_{n+1}-a_{n+1} = \ell_c$ 
and since $b_{n}-a_{n} \leq \ell_c$ we have
$$ \frac{a_{n+1}-a_n}{h} \leq \frac{b_{n+1}-b_n}{h} \leq C,$$
and  (\ref{eq:md2})  implies
$$  \frac{a_{n+1}-a_n}{h}  \geq \beta(a_n)-\frac{1}{2}M^2 \geq -\frac{1}{2}M^2.$$
The result follows.
\end{proof}

\section{Asymptotic behavior when $\beta$ is constant: Existence of traveling wave solutions}
In this section, we study the long time behavior of the drops when $\beta$ is constant (homogeneous media), and prove Theorem \ref{thm:TW}.

As mentioned earlier, Problem \eqref{eq:P} is much simpler in this case since
the length of the support $\ell(t)=b(t)-a(t)$ solves
\begin{equation}\label{eq:ellev}
 \max\left\{ \ell'(t) -\left[F(\ell(t)) - 2\beta\right] , \ell(t)-\ell_c\right\} = 0
\end{equation}
with $F$ defined by \eqref{eq:Fdef}.
Once the solution of (\ref{eq:ellev}) is found, the solution of  \eqref{eq:P}  is   fully determined by solving 
$$
b'(t) = H(\ell(t)) - \beta .
$$

A traveling waves type solution of (\ref{eq:motionc2})-(\ref{eq:mmotionc1'}) is a solution of the form
$$ u(x,t)=v (x-ct).$$
For such a solution, the endpoints of the support $(a(t),b(t))$ of $u$ satisfy
\begin{equation}\label{eq:c} 
a'(t)=c \mbox{ and } b'(t)=c \mbox{ for all $t$}.
\end{equation}
and the length $b(t)-a(t)$ is constant equal to some $\ell_0$.  Equation (\ref{eq:ellev}) thus reduces to finding $\ell_0$ such that
\begin{equation}\label{eq:elleq}
 \max\left\{-\left[F(\ell_0) - 2\beta\right] , \ell_0-\ell_c\right\} = 0.
\end{equation}
To solve this equation, we recall that $F$ is continuous and monotone decreasing on the interval $(0,\ell_c]$. So we have the following:
\begin{enumerate}
\item Either $F(\ell_c) <2\beta$, in which case, there exists a {\bf unique} $\ell_0<\ell_c$ such that $ F(\ell_0)=2\beta$.
\item Or $F(\ell_c)\geq 2\beta$, in which case we have $\ell_0=\ell_c$.
\end{enumerate}

\medskip

This proves the existence of a unique traveling wave, and we now derive the formula \eqref{eq:vitesse} for the speed $c$:
Let $a$ and $b$ be such that $b-a=\ell_0$ and let $u$ be the corresponding solution of \eqref{eq:uom} with support $(a,b)$.
If $u_x(a)> 0$ , then (\ref{eq:mmotionc1'}) and (\ref{eq:c}) imply
$$ 
\frac{1}{2}|u_x(a)|^2=\beta-c,\quad \frac{1}{2}|u_x(b)|^2=\beta+c
$$
and so (\ref{eq:GHV}) yields
$$ c= \frac{1}{2}(H(\ell_0)-G(\ell_0)) = \frac{1}{2} V_0 \kappa \sin\alpha.$$
This is possible only if $\beta-c=\frac{1}{2}|u_x(a)|^2>0$, that is if 
$ \frac{1}{2} V_0 \kappa \sin\alpha< \beta.$

If  $u_x(a)=0$, then (\ref{eq:c}),
 (\ref{eq:mmotionc1'}) and (\ref{eq:GHV})  imply
$$ c= H(\ell_0)-\beta = V_0\kappa \sin\alpha - \beta.$$
This implies formula \eqref{eq:vitesse}.
\medskip

Finally, the stability of $\ell_0$ is an immediate consequence of the monotonicity of $F$ since it implies that any solution of 
(\ref{eq:ellev}) satisfies
$$ 
\left\{
\begin{array}{l}
\ell'(t)< 0 \mbox{ if } \ell<\ell_0\\
\ell'(t) > 0 \mbox{ if } \ell>\ell_0.
\end{array}
\right.
$$
and so $\ell(t)\to \ell_0$ as $t\to \infty$. Since the profile $u(x,t)$ is completely determined by the length of the support, the convergence of $\ell(t)$ implies the uniform convergence of $u(x,t)$ to the profile of the unique traveling wave.

\section{Asymptotic behavior when $\beta$ is periodic: Existence of pulsating traveling solutions}
In this section, we investigate the long time behavior of the solution when the function $\beta$ is periodic, and prove Theorem \ref{thm:PTF0}. This is of course more delicate than the case where $\beta$ is constant, since we cannot reduce \eqref{eq:P} to a single equation for the length $\ell(t)$ of the support.

First, we observe that for any solution we have (using \eqref{eq:GHV}):
\begin{eqnarray} 
a'(t)+b'(t) & \geq &  \frac{1}{2} |u_x(b(t))|^2 - \frac{1}{2} |u_x(a(t))|^2 +\beta(a(t))-\beta(b(t))\nonumber\\
& \geq  & V_0 \kappa \sin\alpha  +\beta(a(t))-\beta(b(t))\label{eq:a+b}.
\end{eqnarray}
In particular the condition
\begin{equation}\label{eq:nonstat} 
\max\beta -\min\beta <V_0 \kappa \sin\alpha +\delta \quad \hbox{ for some } \delta>0
\end{equation}
guarantees  that $a'(t)+b'(t)>\delta$ for all time.
This implies that no stationary solution can exist under  condition \eqref{eq:nonstat}, and that any drops must slide down the inclined plane.

In order to prove the first part of Theorem \ref{thm:PTF0} (existence and stability of Pulsating traveling solutions), we will prove that \eqref{eq:nonstat} actually implies that $b'(t)>0$ for all (large enough) time, and that we can thus rewrite the equations for $a(t)$ and $b(t)$ using $x=b(t)$ instead of $t$ as a parameter. 
We thus start with the following lemma:
\begin{lemma}\label{lem:b'}
The function $t\mapsto b(t)$ is in $C^{1,1}$ and 
if (\ref{eq:nonstat}) holds then there exists $\eta>0$ such that $b'(t)>\eta$ at least for $t\geq T_0=2\frac{\delta}{\ell(0)}$.
\end{lemma}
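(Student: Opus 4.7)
For the $C^{1,1}$ regularity of $t \mapsto b(t)$: reading the second equation of \eqref{eq:motionc1} through the notation \eqref{not:GH} gives $b'(t) = H(\ell(t)) - \beta(b(t))$ with $\ell(t) = b(t) - a(t)$. The functions $\ell$ and $b$ are Lipschitz in $t$ (from the existence construction, with $\ell(t)$ staying in $[\underline{\ell},\ell_c]$), $H$ is locally Lipschitz on $(0,\infty)$ by Lemma~\ref{lem:H}, and $\beta$ is Lipschitz by hypothesis. Composing, $b'$ is Lipschitz, hence $b \in C^{1,1}$.

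For the lower bound on $b'$, the plan is a dichotomy argument based on \eqref{eq:a+b}. Under \eqref{eq:nonstat}, that inequality yields $a'(t) + b'(t) \geq \delta > 0$ for all $t$. Fix $\eta \in (0,\delta/2)$. If $b'(t) \leq \eta$ at some time $t$, then $a'(t) \geq \delta - \eta$, and so
$$\ell'(t) = b'(t) - a'(t) \leq 2\eta - \delta < 0.$$
Thus $b'$ being small forces $\ell$ to decrease strictly, at rate at least $\delta - 2\eta$.

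Combining this with $\ell \leq \ell_c$ and the blow-up $H(\ell) \to +\infty$ as $\ell \to 0^+$, any interval on which $b' \leq \eta$ has bounded length: $\ell(t)$ must eventually drop below the threshold $\ell^* := H^{-1}(\max\beta + \eta)$, at which point $b'(t) = H(\ell(t)) - \beta(b(t)) \geq H(\ell^*) - \max\beta = \eta$, contradicting $b'(t) \leq \eta$. This is where the explicit constant $T_0$ comes from. (In the easier regime $V_0\kappa\sin\alpha > \max\beta + \eta$, the bound $b'(t) \geq H(\ell_c) - \max\beta > \eta$ is automatic and no threshold is needed.)

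The main obstacle is upgrading this single-interval estimate to the pointwise statement $b'(t) > \eta$ for \textit{every} $t \geq T_0$: a priori, $b'$ could oscillate in and out of $\{b' \leq \eta\}$. To close this, my plan is to show that once $\ell$ has crossed below $\ell^*$ (which by the above must happen before $T_0$), it stays below $\ell^*$ thereafter, via a barrier argument at $\ell = \ell^*$: using $\ell' = F(\ell) - \beta(a) - \beta(b)$, the monotonicity of $F$ (Lemma~\ref{lem:H}), and the identity $H - G = V_0\kappa\sin\alpha$ from \eqref{eq:GHV}, one can rule out a crossing from below. This confinement, combined with the formula for $b'$, then yields the claimed uniform lower bound for all $t \geq T_0$; making this barrier argument sharp enough that $\eta$ depends only on the gap in \eqref{eq:nonstat} is the delicate point.
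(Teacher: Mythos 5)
Your reduction is sound up to the point you yourself flag as the main obstacle, but the proposed resolution of that obstacle does not work, and this is where the proof genuinely lives. The confinement claim --- once $\ell$ drops below $\ell^*:=H^{-1}(\max\beta+\eta)$ it stays there --- cannot be established by the barrier computation you describe. At $\ell=\ell^*$ one has, using \eqref{eq:GHV},
$$\ell'=F(\ell^*)-\beta(a)-\beta(b)=2H(\ell^*)-V_0\kappa\sin\alpha-\beta(a)-\beta(b)\;\leq\;2(\max\beta-\min\beta)+2\eta-V_0\kappa\sin\alpha,$$
and the right-hand side is attained when both endpoints sit near $\min\beta$; under \eqref{eq:nonstat} this quantity can be strictly positive (take e.g.\ $\min\beta=1$, $\max\beta=2$, $V_0\kappa\sin\alpha=1.5$, which is compatible with \eqref{eq:nonstat} and with the regime $V_0\kappa\sin\alpha\leq\max\beta+\eta$ where the threshold is actually needed). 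So an upward crossing of $\ell^*$ cannot be ruled out by monotonicity of $F$ and \eqref{eq:GHV} alone, and the step ``this confinement \dots then yields the claimed uniform lower bound'' is unsupported. Note also that $\ell<\ell^*$ is only a sufficient condition for $b'>\eta$ (it is the worst case over $\beta(b)$), so even knowing that each excursion of $\{b'\leq\eta\}$ has bounded length does not prevent $b'$ from dipping to $\eta$ infinitely often.

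The ingredient your proposal is missing is the quantitative strict monotonicity of $H$, Lemma \ref{lem:ndgH}, and with it the argument becomes local in time rather than a global threshold/barrier argument. One argues at the first time $t_0\geq T_0$ at which $b'$ returns to the level $\eta$: since $b'$ is Lipschitz (by \eqref{eq:GH}, as in your first paragraph), $b'<2\eta$ on some interval $(t_0-h,t_0)$ with $b'(t_0-h)>\eta$; there $\ell'\leq-\delta/2$, so Lemma \ref{lem:ndgH} gives $H(\ell(t_0))\geq H(\ell(t_0-h))+c_0\delta h$, while $|b(t_0)-b(t_0-h)|\leq 2\eta h$ forces $|\beta(b(t_0))-\beta(b(t_0-h))|\leq 2\|\beta'\|_\infty\eta h$. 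Hence $b'(t_0)>b'(t_0-h)+c_0\delta h-2\|\beta'\|_\infty\eta h>\eta$ once $\eta<c_0\delta/(2\|\beta'\|_\infty)$, contradicting the choice of $t_0$. This first-down-crossing argument, powered by the nondegeneracy of $H$ (which also explains why $\eta$ ends up depending only on $\delta$, $c_0$ and $\|\beta'\|_\infty$), is what you would need to substitute for the barrier at $\ell^*$.
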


\begin{proof}
Let $\eta\in(0,\delta/8)$ be a small number, to be chosen later.
Equation (\ref{eq:a+b}) and condition (\ref{eq:nonstat}) implies
\begin{equation}\label{strict}
a'(t)+b'(t)> \delta, \quad \mbox{ for all } t\geq 0.
\end{equation} 
So as long as $b'(t)\leq \eta$, we have $a'(t) \geq \delta-\eta$ and thus
$$ \ell'(t) = b'(t)-a'(t) \leq -\delta +2\eta \leq -\delta/2.$$
In particular, we cannot have $b'(t)\leq \eta $ in $(0,T_0)$ (with $T_0=2\frac{\delta}{\ell(0)}$), or the length of the droplet would shrink to a point.

Therefore if the lemma is false, then there exists $t_0>t_1$ such that
$b'(t_0)=\eta$ and $b'(t)\geq \eta$ for $t\in (t_1,t_0)$.
Next, we note that $t\mapsto \ell(t)$ and $t\mapsto b(t)$ are Lipschitz continuous and so \eqref{eq:GH} implies that $t\mapsto b'(t)$ is also Lipschitz continuous.
Therefore, there exists $h\leq \frac{\eta}{K}$ (where $K$ is the Lipschitz constant of $b'$) such that
\begin{equation}\label{eq:b'} 
b'(t) <2\eta \quad \mbox{ for } t\in(t_0-h,t_0).
\end{equation}
Proceeding as before (using the fact that $\eta<\delta/8$), we deduce that
$$ \ell'(t) < -\delta/2   \mbox{ for } t\in(t_0-h,t_0)$$
and so
$$ \ell(t_0)<\ell(t_0-h)-\frac{\delta}{2} h$$
(in particular, $\ell(t)<\ell_c$ and so $u_x(a(t),t)>0$  for $t\in(t_0-h,t_0)$). 
Lemma \ref{lem:ndgH} implies
\begin{equation}\label{eq:ndg} 
H(\ell(t_0)) \geq H(\ell(t_0-h)) +c_0 \delta h
\end{equation}
for some $c_0>0$.
Finally, (\ref{eq:b'}) implies
$$ |b(t_0)-b(t_0-h)| \leq  2h\eta $$ 
and so (\ref{eq:GH}) gives 
$$ 
\begin{array}{lcl}
\eta= b'(t_0) = H(\ell_o)  - \beta(b(t_0)) & > & H(\ell(t_0-h))+c_0 \delta h  -\beta(b(t_0-h) - ||\beta'||_\infty   2h\eta \\[8pt]
& > & b'(t_0-h)+c_0 \delta h  -||\beta'||_\infty  2h\eta 
\end{array}
$$
Since we can always choose $h$ such that $b'(t_0-h)>\eta$ (by taking $h\leq t_0-t_1$), we obtain  a contradiction if we choose $\eta$ such that
$$ \eta <  \frac{c_0 \delta}{2||\beta'||_\infty}.$$
\end{proof}

\begin{proof}[Proof of Theorem \ref{thm:PTF0} (i)]
Lemma \ref{lem:b'} implies that $t\mapsto b(t)$ is bijective from $(T_0,\infty)$ onto $(b(T_0),\infty)$. We can thus re-parametrize the motion of the drop using $x=b(t)$ as our new parameter.
We now write the equation satisfied by $y(x)=\ell(b^{-1}(x)$:
$$
\max\{y'(x) b'(b^{-1}(x)) - [F(y(x)) - \beta(a(b^{-1}(x))) -\beta(x)], y(x)-\ell_c\} =0.
$$
Using the fact that $b'(b^{-1}(x))>0$ and  $a(t)=b(t)-\ell(t)$, the equation can be written as
$$
\max\{y'(x) - \frac{1}{ b'(b^{-1}(x))}[F(y(x)) - \beta(x-y(x)) -\beta(x)], y(x)-\ell_c\} =0.
$$
Finally, using the equation for $b'(t)$, we deduce that $y$ solves
\begin{equation}\label{eq:y}
\max\{y'(x) -\mathcal F (y(x),x), y(x)-\ell_c\} =0.
\end{equation}
with
$$ 
\mathcal F(x,y) = \frac{F(y(x)) - \beta(x-y(x)) -\beta(x)}{H(y(x)) -\beta(x)}.
$$
In particular, the function $(x,y)\mapsto \mathcal F(x,y)$ is periodic with respect to $x$.

\vspace{10pt}

We now need the following lemma which will be proved  later on:
\begin{lemma}\label{lem:y}
There exists a $1$-periodic function $z(x)$ such that
$$ \lim_{n\rightarrow \infty} y(n+x)=z(x).$$
\end{lemma}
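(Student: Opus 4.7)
\textbf{Plan of proof for Lemma \ref{lem:y}.} My approach is to view \eqref{eq:y} as a $1$-periodic scalar ODE with obstacle and to analyze its Poincar\'e map. Concretely, for each $\eta\in[\underline\ell,\ell_c]$, let $\tilde y(\cdot;\eta)$ denote the solution of \eqref{eq:y} on $[0,1]$ with $\tilde y(0;\eta)=\eta$, and set $P(\eta)=\tilde y(1;\eta)$. The map $P$ is well-defined and continuous: indeed $F=G+H$ and $\beta$ are Lipschitz by Lemma~\ref{lem:H} and assumption, and Lemma~\ref{lem:b'} gives $H(y(x))-\beta(x)=b'(b^{-1}(x))\geq\eta>0$ along the original trajectory, so $\mathcal F(y,x)$ is Lipschitz in $y$ in a neighborhood of that trajectory. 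Continuous dependence for the ODE with obstacle then yields continuity of $P$, and the a priori bound $\underline\ell\leq y\leq\ell_c$ (from Proposition~\ref{prop:properties}(ii) and the obstacle) shows that $P$ maps $[\underline\ell,\ell_c]$ into itself.

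The crucial step is monotonicity: if $\eta_1\leq\eta_2$, then $\tilde y(x;\eta_1)\leq\tilde y(x;\eta_2)$ for every $x\in[0,1]$. This is a standard comparison argument for \eqref{eq:y} which I would argue by contradiction at the last crossing point $x^*$: if both trajectories are strictly below $\ell_c$ near $x^*$, Lipschitz ODE uniqueness forbids separation; if both reach $\ell_c$ simultaneously they must leave it at the same instant, determined only by the sign of $\mathcal F(\ell_c,\cdot)$; the remaining mixed case is ruled out because the larger solution, once at the obstacle, has $\tilde y'\leq\mathcal F(\ell_c,x)$ and the smaller satisfies $\tilde y'=\mathcal F(\tilde y,x)$, and one checks these are compatible with order preservation. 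Consequently $P$ is monotone non-decreasing on $[\underline\ell,\ell_c]$.

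With monotonicity established, set $\eta_n:=y(n)$, so that $\eta_{n+1}=P(\eta_n)$. Comparing $\eta_0$ and $\eta_1=P(\eta_0)$ and applying $P$ repeatedly shows that $\{\eta_n\}$ is monotone (increasing if $\eta_0\leq\eta_1$, decreasing otherwise). Being bounded, it converges to some $\eta_*\in[\underline\ell,\ell_c]$, which is a fixed point of $P$ by continuity. Define $z$ on $[0,1]$ as the solution of \eqref{eq:y} with $z(0)=\eta_*$; the identity $z(1)=P(\eta_*)=\eta_*$ combined with the $1$-periodicity of $\mathcal F(y,\cdot)$ allows us to extend $z$ to a $1$-periodic solution on $\RR$. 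Finally, since $y(n+\cdot)$ is the solution of \eqref{eq:y} on $[0,1]$ starting at $\eta_n\to\eta_*$, continuous dependence on initial data gives $y(n+x)\to z(x)$ uniformly in $x\in[0,1]$, and periodicity of $z$ yields the claim for all $x$.

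The principal technical point is the comparison step in the middle paragraph: because $\mathcal F$ is only defined (and Lipschitz in $y$) on the region where $H(y)-\beta(x)>0$, I must ensure the Poincar\'e iteration stays in this region. This is guaranteed by Lemma~\ref{lem:b'} together with the monotonicity itself, since any trajectory trapped between two shifts of the original one inherits the lower bound on $H(y)-\beta$.
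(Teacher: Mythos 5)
Your argument is correct and is essentially the paper's own proof in different clothing: the paper compares the shifted trajectories $y_n(x)=y(x+n)$ directly and deduces monotone bounded convergence from uniqueness/order preservation, which is exactly the monotonicity of your Poincar\'e map $P$ applied to $\eta_n=y(n)$. Your version is somewhat more explicit about the technical points (continuity of $P$, handling the obstacle in the comparison, and the lower bound $H(y)-\beta>0$ needed for $\mathcal F$ to be Lipschitz), but the underlying mechanism is the same.
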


In order to conclude, we rewrite the equation for $b(t)$ as
\begin{eqnarray*}
 b'(t)& = & H(y(b(t))) - \beta(b(t)) \\
 & =& \mathcal H(b(t))+\vphi(b(t))
 \end{eqnarray*}
with
$\mathcal H(b):=H(z(b))-\beta(b)$ periodic function, and 
$\vphi(b):=H(y(b)) - H(z(b))$ satisfying
$$ |\vphi(b)|\longrightarrow 0 \mbox{ as } b\to\infty.$$

Since $b'(t)\geq \eta$, there exists a sequence $t_n\to\infty$ such that $b(t_n)=n$. Let us consider
$ b_n(t):=b(t_n+t)-n$, which solves the following equation:
$$
b_n'(t) = \mathcal H(b_n(t))+\vphi(b_n(t)+n) ,\qquad b_n(0)=0.$$
We deduce
$$ |b_{n+k}(t)-b_n(t)|\leq e^{Ct} (|\vphi(n)|+|\vphi(n+k)|), $$
and thus $\{b_n\}$ is a Cauchy sequence: it converges to a monotone increasing function $b_\infty(t)$.
Since 
$$ b_n(t_{n+1}-t_n) = b_{n+1}(0)+1,$$
we deduce that $t_{n+1}-t_n$  converges to the unique $t_0$ satisfying $b_\infty(t_0)=1$ and that $b_\infty$ is $t_0$-periodic.

Hence, we can show that $b(t)-b_\infty(t)$ converges to $0$ as $t$ goes to $\infty$, and defining $\ell_\infty(t)=z(b_\infty(t))$, and $a_\infty(t)=b_\infty(t)-\ell_\infty(t)$, we obtain a pulsating solution $(a_\infty(t),b_\infty(t))$. 

Lastly, the uniqueness of the pulsating solution with positive speed (which is guaranteed by Lemma~\ref{lem:b'}) follows from the comparison principle (Proposition~\ref{prop:cp}).

\end{proof}

\begin{proof}[Proof of Lemma \ref{lem:y}]
We recall that $y(x)$ is a bounded solution of (\ref{eq:y}).
For $n\in \NN$, we denote $y_n(x)=y(x+n)$. The periodicity of $\mathcal F$ with respect to $x$ implies that it is also a bounded solution of (\ref{eq:y}) with initial condition $y_n:=y(n)$.

If $y_1=y_0$, then the uniqueness implies that $y(x+1)=y(x)$ for all $x$ and we can take $z(x)=y(x)$.
Otherwise, if, for instance $y_1>y_0$, then the uniqueness implies that 
$ y_1(x)\geq y_0(x)$ for all $x\geq 0$ and so $y_2=y_1(1)\geq y_0(1)-y_1$. We deduce $y_2(x)\geq y_1(x)$, and iterating this argument, we get
$$ y_{n+1}(x)\geq y_n(x) \quad \mbox{ for all $n\in\NN$}.$$
So $\{y_n(x)\}$ is monotone and bounded, and thus there exists $z(x)$ such that $\lim_{n\to\infty} y_n(x)=z(x)$. Since $y_{n}(x+1)=y_{n+1}(x)$, it follows that $z(x+1)=z(x)$, which completes the proof.
\end{proof}

To complete the proof of Theorem \ref{thm:PTF0}, it remains to investigate the case where
\begin{equation}\label{eq:maxmin}
\max\beta - \min\beta \geq V_0\kappa\sin\alpha +\delta
\end{equation}
for some $\delta>0$.
The comparison principle (Proposition \ref{prop:cp}) implies that stationary solutions act as barrier and 
prevents the motion of the drop down the inclined plane.
However, condition \eqref{eq:maxmin} does not automatically imply the existence of a stationary solution.
Nevertheless, we can show  that  stationary solutions do not exist provided the period of $\beta$ is small enough.

\begin{proof}[Proof of Theorem \ref{thm:PTF0} (ii)]
We denote by $L$ the period of $\beta$.

We first fix $\ell_0$ such that $H(\ell_0)=\max \beta$.
We note that we can always find such an $\ell_0$. Indeed, $H(\ell)\to+\infty$ as $\ell\to0$, and using the fact that $G(\ell_c)=0$ together with \eqref{eq:GHV}, we get $H(\ell_c) = V_0 \kappa \sin \alpha$. Condition  \eqref{eq:maxmin} thus implies
$$ H(\ell_c) = V_0 \kappa \sin \alpha <\max \beta -\min\beta <\max \beta.$$
and the intermediate value theorem implies the existence of $\ell_0<\ell_c$ such that $H(\ell_0)=\max \beta$.

We now fix $a$ such that $\beta (a)=\min \beta$, and we find $b\in[a+\ell_0,a+\ell_0+L]$ such that $\beta (b)=\max \beta$ (we can always find such a $b$ thanks to the periodicity of $\beta$).

Consider now the solution of \eqref{eq:uom} with support $(a,b)$.
Since $\ell_0\leq b-a\leq \ell_0+L$, we have 
$$\frac{1}{2}|u_x|^2(b)=H(b-a) \leq H(\ell_0) =\max\beta$$
and so (thanks to the choice of $b$)
\begin{equation}\label{eq:bbb} 
\frac{1}{2}|u_x|^2(b) -\beta (b)\leq 0.
\end{equation}
We also have  (using Lemma \ref{lem:H}, and the fact that $\ell_0<\ell_c$)
\begin{equation}\label{eq:uuuu}
\frac{1}{2}|u_x|^2(a)=G(b-a) \geq G(\ell_0+L) \geq G(\ell_0)-C L,
\end{equation}
as long as $L\leq \frac{1}{2}(\ell_c-\ell_0)$. Finally, (\ref{eq:GHV}) implies
$$ G(\ell_0)=H(\ell_0)-V_0\kappa\sin\alpha  = \max \beta -V_0\kappa\sin\alpha.$$
In particular \eqref{eq:uuuu} yields
$$ \frac{1}{2}|u_x|^2(a) \geq \beta (a)=\min \beta$$
as soon as
$$  \max \beta -\min\beta \geq V_0\kappa\sin\alpha +CL $$
which holds for all $L\leq \delta_0$ in view of \eqref{eq:maxmin}.
Therefore we deduce
\begin{equation}\label{eq:aaa} 
 \beta(a) - \frac{1}{2}|u_x|^2(a) \leq 0
 \end{equation}

The comparison principle together with \eqref{eq:bbb} and \eqref{eq:aaa} implies that if 
$a(0)\leq a$ and $b(0)\leq b$, then the corresponding solution will satisfy
$$ a(t)\leq a \mbox{ and } b(t)\leq b \mbox{ for all }t\geq 0,$$
hence the result.
\end{proof}

\section{Homogenization of the velocity law}\label{sec:hom}
We now prove our last result, Theorem \ref{thm:hom}. Recall that the function $r(q)$ is defined  for all $q\neq 0$ as in \eqref{hom:vel}, as the effective speed of the global (periodic) solution of the ODE
\begin{equation}\label{eq:xq} 
x'(t) = q - \beta(x(t)).
\end{equation} 
(see Lemma \ref{lem:r}, whose proof is provided in appendix \ref{app:r}).

\medskip

Now, let $u^\e(x,t)$ solve the inhomogeneous problem with period $\e$ and with given initial data $u_0$.  We also denote by $a^\eps(t)$ and $b^\eps(t)$ the left and right endpoints of the support of $u^\eps$.

\vspace{20pt}

\noindent {\bf Convergence.} First, we obtain a priori estimate on $u^\eps$ in the same way that we did in the construction of a solution in Section \ref{sec:dt}:

We recall that $a^\eps(t)$, $b^\eps(t)$ and $\ell^\eps(t)=b^\eps(t)-a^\eps(t)$ are Lipschitz continuous (bounded speed of the endpoints) uniformly with respect to $\eps$ and there exists $\underline \ell$ independent of $\e$ such that
\begin{equation}\label{eq:ll1} 
\underline \ell \leq \ell^\eps(t) \leq \ell_c \quad \forall t\geq 0.
\end{equation}

In particular there exists a subsequence $\e\to0$ and some Lipschitz continuous functions $a(t)$ and $b(t)$ 
such that 
$$ a^\eps(t) \to a(t), \qquad b^\eps(t)\to b(t) \mbox{ uniformly with respect to $t\in [a,b]$}$$
for all $a<b\in \RR$.

Next, we note that for any  $t\geq 0$, $u^\eps(x,t)$ solves 
$$ 
-u_{xx}(\cdot,t) = \lambda(\ell^\eps(t)) - \kappa\cos\alpha u + (x-b^\eps(t))\kappa\sin\alpha,  \quad \mbox{ in } (a^\eps,b^\eps)
$$
and so  the function $v^\eps(x,t) = u^\eps(a^\eps(t)+x\ell^\eps(t),t)$ solves 
$$
-v_{xx}(\cdot,t) = {\ell^\eps(t)}^2\left[ \lambda(\ell^\eps(t)) - \kappa\cos\alpha v +\ell^\eps(t) (x-1)\kappa\sin\alpha\right],  \quad \mbox{ in } (0,1)
$$
with
$$ v^\eps(0,t)=v^\eps(1,t)=0.$$

In particular, it is readily seen that $x\mapsto v^\eps(x,t)$ is bounded in $C^2(0,1)$ uniformly with respect to $t$,
and that $v^\eps$ and $v^\eps_x$ are Lipschitz  continuous with respect to $t$, uniformly with respect to $\eps$. Finally, $v^\eps$ converges  locally uniformly (with respect to $x$ and $t$) to a function $v$ solution of 
$$
-v_{xx}(\cdot,t) = {\ell(t)}^2\left[ \lambda(\ell(t)) - \kappa\cos\alpha v +\ell(t) (x-1)\kappa\sin\alpha\right],  \quad \mbox{ in } (0,1)
\mbox{ with } v(0,t)=v(1,t)=0.$$

Writing  
$$u^\eps(x,t)=v^\eps\left(\frac{x-a^\eps(t)}{\ell^\eps(t)},t\right),$$ 
we deduce:
\begin{lemma}
The function $x\mapsto u^\eps(x,t)$ is bounded in $C^2(a^\eps,b^\eps)$ uniformly w.r.t. $t$, and $u^\eps$ and $u^\eps_x$ are Lipschitz  continuous with respect to $t$, uniformly with respect to $\eps$.
Finally, $u^\eps$  converges  locally uniformly (with respect to $x$ and $t$) to a function $v$ solution of 
$$
-u_{xx}(\cdot,t) = \lambda(\ell(t)) - \kappa\cos\alpha u + x\kappa\sin\alpha,  \quad \mbox{ in } (a(t),b(t)) \mbox{ with }u(a(t),t)=u(b(t),t)=0$$
and 
$$
\int u(\cdot,t) dx = V_0.
$$
\end{lemma}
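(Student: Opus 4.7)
The plan is essentially to collect the uniform estimates on the rescaled function $v^\e(x,t) = u^\e(a^\e(t)+x\ell^\e(t),t)$ already established in the preceding discussion and then transfer them back to $u^\e$ via the change of variables, before invoking Arzel\`a--Ascoli to pass to the limit.

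First I would record that the uniform bounds $\underline\ell\le\ell^\e(t)\le\ell_c$ together with Proposition~\ref{prop:properties} (ii) yield $|\lambda(\ell^\e(t))|\le \Lambda$ uniformly in $\e$ and $t$. Since $v^\e(\cdot,t)$ solves a linear second order ODE on the fixed interval $(0,1)$ with zero boundary conditions and with coefficients that are uniformly bounded, classical ODE estimates give a uniform $C^2$ bound $\|v^\e(\cdot,t)\|_{C^2(0,1)}\le C$ independent of $\e$ and $t$. Pulling this back through $u^\e(x,t)=v^\e\big((x-a^\e(t))/\ell^\e(t),t\big)$ and using once more $\ell^\e\ge\underline\ell$ gives the claimed uniform $C^2$ bound for $x\mapsto u^\e(x,t)$.

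Next, for the Lipschitz continuity in $t$, I would use the already established uniform Lipschitz continuity of $a^\e(t)$, $b^\e(t)$ and $\ell^\e(t)$ (Proposition~\ref{prop:properties} (iii)), together with the Lipschitz dependence of $\lambda(\ell)$ on $\ell$ proved in the course of Lemma~\ref{lem:H} (see (\ref{claim1})), to show that $\lambda(\ell^\e(t))$ is Lipschitz in $t$ uniformly in $\e$. The difference $v^\e(\cdot,t_1)-v^\e(\cdot,t_2)$ then solves a linear ODE with zero Dirichlet data and right-hand side bounded by $C|t_1-t_2|$, hence is $O(|t_1-t_2|)$ in $C^1([0,1])$. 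Chain rule and the uniform $C^2$ bound above then translate this into Lipschitz continuity in $t$ of both $u^\e$ and $u^\e_x$, uniformly in $\e$.

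Finally I would conclude by Arzel\`a--Ascoli: along a subsequence, $v^\e\to v$ locally uniformly, and by the $C^1_x$ estimate in $t$ one may also assume $v^\e_x\to v_x$. Coupled with the already noted convergence $a^\e\to a$, $b^\e\to b$, $\ell^\e\to\ell$, one concludes $u^\e\to u$ locally uniformly. Passing to the limit in the equation for $v^\e$, using continuity of $\lambda(\cdot)$ to get $\lambda(\ell^\e(t))\to\lambda(\ell(t))$, and in the volume constraint $\ell^\e(t)\int_0^1 v^\e\,dx=V_0$ identifies the limit as the stated solution. The main subtlety I expect is the continuous passage of the Lagrange multiplier, but this is precisely the content of the Lipschitz estimate (\ref{claim1}) on $\lambda$ obtained in Section~\ref{sec:ob}, so no new ingredient is needed.
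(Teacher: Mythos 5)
Your proposal is correct and follows essentially the same route as the paper: the paper's argument is precisely the rescaling $v^\eps(x,t)=u^\eps(a^\eps(t)+x\ell^\eps(t),t)$ to the fixed interval $(0,1)$, uniform $C^2$ bounds from the uniformly bounded coefficients, Lipschitz continuity in $t$ inherited from $a^\eps$, $b^\eps$, $\ell^\eps$ and $\lambda(\ell)$, and then passage to the limit before pulling back to $u^\eps$. Your added detail on the Lipschitz estimate for $\lambda$ via (\ref{claim1}) and the ODE for the difference $v^\eps(\cdot,t_1)-v^\eps(\cdot,t_2)$ simply makes explicit what the paper leaves implicit.
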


We can now prove Theorem \ref{thm:hom}:

\begin{proof}[Proof of Theorem \ref{thm:hom}] We first consider the equation for $b^\eps(t)$, which reads
$$ {b^\eps}'(t) = H(\ell^\eps(t))-\beta(b^\eps(t)/\eps).$$
Since $t\mapsto \ell^\eps$ is a Lipschitz continuous function, we can also write
$$  {b^\eps}'(t) = q^\eps(t)-\beta(b^\eps(t)/\eps)$$ 
with $q^\eps(t)$ uniformly (w.r.t. $\eps$) Lipschitz function converging (uniformly) to $q(t)=H(\ell(t))$.

For a given $t_0>0$, the function
$$x^\eps(t)=\frac{1}{\eps}b^\eps(t_0+\eps t)$$
solves
$$ {x^\eps}'(t) = q^\eps(t_0+\eps t)-\beta(x^\eps(t)).$$
Since $q^\eps(t_0+\eps t)$ is Lipschitz and $q^\eps(t_0)$ converges to $q(t_0)$, we have that for all $\delta>0$, there exists $\eps_0$ such that
$$ |q^\eps(t_0+\eps t) -q(t_0)| \leq \delta +K\eps t \qquad \forall \eps<\eps_0,\quad \forall t.$$ 
and so
\begin{equation}\label{eq:q}
 |q^\eps(t_0+\eps t) -q(t_0)| \leq 2\delta  \qquad \forall \eps<\eps_0,\quad \forall |t|\leq \frac{\delta}{K\eps}.
 \end{equation}

It follows that
$$ {x^\eps}'(t) \leq  q(t_0) +2\delta -\beta(x^\eps(t)) \qquad \forall \eps<\eps_0,\quad \forall |t|\leq \frac{\delta}{K\eps} .$$
and
$$ {x^\eps}'(t) \geq  q(t_0) -2\delta -\beta(x^\eps(t)) \qquad \forall \eps<\eps_0,\quad \forall |t|\leq \frac{\delta}{K\eps} .$$

Denoting by $\overline x$ (respectively $\underline x$) the solution of \eqref{eq:xq} with $q=  q(t_0) +2\delta$ (respectively $q=  q(t_0) -2\delta$) satisfying $\overline x(0)=\underline x(0)= x^\eps(0)$, we deduce
\begin{equation}\label{eq:tr}
 \underline x(t)\leq x^\eps(t)\leq \overline x(t) \qquad \forall \eps<\eps_0,\quad 0\leq t \leq \frac{\delta}{K\eps} .
 \end{equation}
\begin{equation}\label{eq:tl}
 \overline x(t)\leq x^\eps(t)\leq \underline x(t) \qquad \forall \eps<\eps_0,\quad  -\frac{\delta}{K\eps}\leq t \leq 0.
 \end{equation}

Equation \eqref{eq:tr} implies
$$r(q(t_0) -2\delta) t -1 \leq x^\eps(t)-x^\eps(0)\leq r(q(t_0) +2\delta) t +1\qquad \forall \eps<\eps_0,\quad   0\leq t \leq \frac{\delta}{K\eps} $$
and so
$$r(q(t_0) -2\delta) t -\eps \leq b^\eps(t_0+t)-b^\eps(t_0)\leq r(q(t_0) +2\delta) t +\eps \qquad \forall \eps<\eps_0,\quad  0\leq t \leq \frac{\delta}{K} .$$
Passing to the limit $\eps\to 0$, we deduce
$$r(q(t_0) -2\delta)   \leq \frac{b (t_0+t)-b(t_0)}{t}\leq r(q(t_0) +2\delta)  \qquad  0\leq t \leq \frac{\delta}{K}. $$
and taking the limit $t\to 0^+$ and then $\delta\to0$, we obtain (using the continuity of the function $q\mapsto r(q)$):
$$r(q(t_0))   \leq \liminf_{t\to0_+}\frac{b (t_0+t)-b(t_0)}{t}\leq \limsup_{t\to0_+}\frac{b (t_0+t)-b(t_0)}{t} \leq  r(q(t_0)). $$
Furthermore, using \eqref{eq:tl} instead of \eqref{eq:tr}, we can show that a similar inequality holds for the limit $t\to0^+$.
We thus deduce that $b$ is differentiable at $t_0$ and satisfies
$$b'(t_0)= r(q(t_0))$$
where 
$q(t_0)=H(\ell(t_0)) = \frac{1}{2} |u_x(b(t_0),t_0)|^2$.
\bigskip

The equation for ${a^\eps}(t)$ is handled in a similar fashion. Recall that
$$ -{a^\eps}'(t) \leq G(\ell^\eps(t))-\beta(a^\eps(t)/\eps),$$
with equality if $|u^\eps_x(a^\eps(t))|>0$.

If $\lim_{\eps\to0 }|u^\eps_x(a^\eps(t_0))| >0$, then we have $|u^\eps_x(a^\eps(t))| >0$ for $t$ in a neighborhood of $t_0$ and for small $\eps$, and the argument presented above applies. If $\lim_{\eps\to0} |u^\eps_x(a^\eps(t_0))| =0$, then the above argument applies, but we have $q(t_0)=0$ and so we  only get
$$ 
-a^\eps(t_0+t)+a^\eps(t_0)\leq r(2\delta) t +\eps \qquad \forall \eps<\eps_0,\quad  0\leq t \leq \frac{\delta}{K} .$$
Passing to the limit $\eps\to 0$, we deduce
$$- \frac{a (t_0+t)-a(t_0)}{t}\leq r( 2\delta)  \qquad  0\leq t \leq \frac{\delta}{K}. $$
and taking the limit $t\to 0^+$ and then $\delta\to0$, we obtain (using the continuity of the function $q\mapsto r(q)$):
$$  \limsup_{t\to0_+}- \frac{a (t_0+t)-a(t_0)}{t} \leq  r(0). $$
and a similar  limit for $t\to0^+$.

This prove that $u^\eps$ converges (up to a subsequence) to a solution of \eqref{eq:motionsupp}, \eqref{eq:motionob}, \eqref{eq:velhom}.
\vspace{20pt}

To complete the proof of Theorem \ref{thm:hom}, it remains to prove the uniqueness of the solution of the homogenized problem, which also implies the convergence of the whole sequence $u^\eps$ to $u$. 
For that, 
we will show that a comparison principle similar to that of Proposition~\ref{prop:cp} holds (note that we cannot deduce this comparison principle by passing to the limit $\eps\to0$ in Proposition~\ref{prop:cp} since two solutions of the homogenized problem may be obtained by passing to the limit along different subsequences of $\eps$).

Using the notations of (\ref{not:GH}) we first rewrite \eqref{eq:velhom} as follows
\begin{equation}\label{eq:GHhom}
\left\{
\begin{array}{ll}
\min\{ a'(t)+r(G(\ell(t))),\ell_c-\ell(t)\}=0 \\[5pt]
b'(t) = r(H(\ell(t))).
\end{array}
\right.
\end{equation}
We now consider two solutions of the homogenized problem $u_1$ and $u_2$ with initial data with support $(a_1(0),b_1(0))$ and $(a_2(0),b_2(0))$ satisfying
$$ a_1(0)\leq a_2(0) \, , \quad \mbox{ and } b_1(0)\leq b_2(0).$$
We then define 
$$ f(t) =\max\{a_1(t)-a_2(t),b_1(t)-b_2(t)\}.$$
Since the functions $a_i(t)$ and $b_i(t)$ are in $W^{1,\infty}_{loc}(0,\infty)$, $f(t)$ is in $W^{1,\infty}_{loc}(0,\infty)$ and we have
$$ f'(t)=
\left\{
\begin{array}{ll}
a'_1(t)-a'_2(t) & \mbox{ if } \ell_2 (t)>\ell_1(t) \\[5pt]
b'_1(t)-b'_2(t) & \mbox{ if } \ell_2 (t)\leq \ell_1(t)
\end{array}
\right.
$$
When $\ell_2 (t)>\ell_1(t)$, we have in particular $\ell_1(t)<\ell_c$, and so (\ref{eq:GHhom}) gives
$$ a'_1(t) = -r(G(\ell_1(t))), \quad a'_2(t) \geq -r(G(\ell_2(t))).$$
We deduce
$$
 f'(t)\leq 
\left\{
\begin{array}{ll}
r(G(\ell_2(t)))-r(G(\ell_1(t)))  & \mbox{ if } \ell_2 (t)>\ell_1(t) \\[5pt]
r(H(\ell_1(t)))-r(H(\ell_2(t)))  & \mbox{ if } \ell_2 (t)\leq \ell_1(t)
\end{array}
\right.
$$
and the monotonicity of $G$,  $H$ and $r$ implies 
$$
f'(t)\leq 0 .
 $$
We deduce that if $f(0)\leq 0$, then $f(t)\leq 0$ a.e. $t\geq 0$ which implies the comparison result, and the uniqueness of the solution.
\end{proof}

\appendix

\section{The function $r(q)$: Proof of Lemma \ref{lem:r}}\label{app:r}
In this section, we prove Lemma \ref{lem:r}.
We recall that $\beta$ is a periodic function (with period $1$ for instance) and for a given $q\geq 0$, we consider the following equation:
\begin{equation}\label{eq:ode}
x'(t) = q -\beta(x(t)).
\end{equation}
Since $\beta$ is a Lipschitz function, (\ref{eq:ode}) has a unique solution for any initial data $x(0)=x_0$ and two solutions can never cross.

 \medskip
  
Our first remark is that if $q\in[\min \beta,\max \beta]$, then any solution of (\ref{eq:ode}) will be trapped  in the sense that 
$ x(t)\in[x(0)-1,x(0)+1]$
for all $t$. Indeed, periodicity of $\beta$ implies that there exists $x_1\in[x(0)-1,x(0))$ and $x_2\in(x(0),x(0)+1]$ such that $q-\beta(x_1)\geq 0$ and $q-\beta(x_2)\leq 0$. It is then easy to show that $x(t)\in [x_1,x_2]$ for all $t$.

In that case, the effective speed of any solutions of (\ref{eq:ode}) is zero. We thus have
$$ r(q)=0 \, , \quad q\in [\min \beta,\max \beta].$$
 
 \medskip

For $q>\max \beta$ and $q<\min\beta$, on the other hand, any solution of \eqref{eq:ode} will be strictly monotone, with $\lim_{t\to \pm \infty} x(t)=\pm\infty$.
In particular, we can then show that all solutions are equal, up to a translation in time.
Assuming that $x(0)=0$ (without loss of generality), there exist a unique $t_c$ such that  $x(t_c)=1$. 
Uniqueness for \eqref{eq:ode} implies that $t\mapsto x(t)$ is then periodic with period $t_c$ and 
that its effective speed is given by
$$ r(q)=\frac{1}{t_c}$$

We then have
\begin{proposition}\label{prop:rq}
The function $r:q\mapsto r(q)$ defined above is a non-decreasing function. 
Furthermore,
\item[(i)] $q\mapsto r(q)$ is locally Lipschitz in $(\max \beta,\infty)$ and $[0,\min\beta)$
\item[(ii)]  If $q_0=\max \beta$, then we have
$$ r(q)\leq C(q-q_0)^{1/2} \quad \mbox{ for } q\geq q_0$$
if $x\mapsto \beta$ is $C^2$, and 
$$ r(q)\leq C(-\ln(q-q_0))^{-1} \quad \mbox{ for } q\geq q_0$$
if $x\mapsto \beta$ is only Lipschitz.
In particular $r$ is a continuous function in $[0,\infty)$
\end{proposition}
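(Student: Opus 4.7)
The plan is to reduce everything to an explicit integral formula for the period $t_c(q)$ coming from separation of variables, and then analyze its behavior at the critical thresholds.

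First, the monotonicity of $r$ follows immediately from the ODE comparison principle: if $q_1 \leq q_2$ and $x_1(0) = x_2(0)$, then $x_1(t) \leq x_2(t)$ for all $t \geq 0$, and dividing by $t$ and passing to the limit yields $r(q_1) \leq r(q_2)$. Next, for $q > \max \beta$, any solution of \eqref{eq:ode} is strictly increasing, and separation of variables yields
$$
t_c(q) \;=\; \int_0^1 \frac{dx}{q-\beta(x)}, \qquad r(q) = \frac{1}{t_c(q)}.
$$
On any compact subset of $(\max \beta,\infty)$, the integrand and all its $q$-derivatives are bounded uniformly in $x$, so $t_c$ is $C^\infty$ (hence locally Lipschitz) on this set, and so is $r = 1/t_c$. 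The same argument, with the time to travel backwards one period, gives local Lipschitz regularity on $[0,\min \beta)$.

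The main (and essentially only) obstacle is to quantify how fast $t_c(q) \to \infty$ as $q \to q_0^+$ with $q_0 = \max \beta$, since this controls $r(q) \to 0$ and delivers the continuity at $q_0$. Let $x^* \in [0,1]$ be a point where $\beta(x^*) = \max\beta$. The contribution to $t_c(q)$ coming from $\{|x-x^*|\geq \delta\}$ is bounded uniformly in $q$, so the blow-up comes entirely from a neighborhood of $x^*$. If $\beta \in C^2$, Taylor expansion yields $\beta(x) \geq \max\beta - M(x-x^*)^2$ for $x$ near $x^*$, so
$$
t_c(q) \;\geq\; \int_{|x-x^*|\leq \delta} \frac{dx}{(q-q_0) + M(x-x^*)^2} \;=\; \frac{1}{\sqrt{M(q-q_0)}}\int_{|u|\leq \delta\sqrt{M/(q-q_0)}} \frac{du}{1+u^2},
$$
and the last integral tends to $\pi$ as $q \to q_0^+$, whence $t_c(q) \geq c(q-q_0)^{-1/2}$ and $r(q) \leq C(q-q_0)^{1/2}$. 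If $\beta$ is only Lipschitz, the best one can say is $\beta(x) \geq \max\beta - L|x-x^*|$, and the analogous estimate gives
$$
t_c(q) \;\geq\; \int_{|x-x^*|\leq \delta}\frac{dx}{(q-q_0)+L|x-x^*|} \;=\; \frac{2}{L}\ln\!\left(1+\frac{L\delta}{q-q_0}\right),
$$
so $t_c(q) \geq c\,|\ln(q-q_0)|$ and $r(q) \leq C/|\ln(q-q_0)|$.

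Finally, continuity of $r$ on all of $[0,\infty)$ follows by combining the previous steps: $r$ is smooth on $(\max\beta,\infty)$ and on $[0,\min\beta)$, identically zero on $[\min\beta,\max\beta]$ by Lemma \ref{lem:r}, and the asymptotic bounds above (applied symmetrically at $\min\beta$) give $r(q)\to 0$ as $q$ approaches either $\min\beta$ or $\max\beta$ from outside the pinning interval. The delicate step is the localization/change-of-variables argument near the extremum; everything else is a direct consequence of the explicit formula for $t_c(q)$.
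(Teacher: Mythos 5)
Your proof is correct, but it takes a genuinely different route from the paper's. For the local Lipschitz regularity (i), the paper compares two trajectories $x(t)$ and $y(t)$ (for nearby values $q_1<q_2$) directly: it introduces $h(s)=y\circ x^{-1}(s)-s$, derives the differential inequality $h'\leq \frac{K}{\delta}(h+\eta)$ using only the Lipschitz bound on $\beta$ and the lower bound $x'\geq q_1-\max\beta$, and concludes $r(q_2)\leq r(q_1)+C\eta$ by Gronwall. You instead invoke the explicit period integral $t_c(q)=\int_0^1 \frac{dx}{q-\beta(x)}$ from separation of variables and differentiate under the integral sign; this is more elementary and gives smoothness of $t_c$ for free, at the price of being tied to the one-dimensional, autonomous structure (the paper's Gronwall argument is the one that survives in settings without such a closed formula). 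For (ii), the two arguments are essentially dual to each other: the paper bounds the trajectory from above by solving the differential inequality $x'\leq \eta+Cx^2$ (resp.\ $x'\leq\eta+Cx$) forward in time and reading off a lower bound on the crossing time, while you lower-bound the same quantity $t_c(q)$ by restricting the period integral to a neighborhood of the maximizer $x^*$ and computing $\int \frac{dx}{(q-q_0)+M(x-x^*)^2}$ explicitly; both yield $t_c\gtrsim (q-q_0)^{-1/2}$ and $t_c\gtrsim|\ln(q-q_0)|$ respectively. Your treatment of monotonicity via the scalar ODE comparison principle is also fine (the paper asserts monotonicity without detailing it), and your localization near $x^*$ is legitimate since by periodicity the integral over any period dominates the contribution of a small interval around the maximizer. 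The only point worth making explicit is that the same degenerate-endpoint estimate must be run at $\min\beta$ (as you note) to get continuity of $r$ on all of $[0,\infty)$, since the statement also requires matching $r(q)\to 0$ as $q\uparrow\min\beta$.
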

\begin{proof}[Proof of Proposition \ref{prop:rq}]
To prove the first part, we consider $q_2>q_1>\max \beta$ and we set $\eta=q_2-q_1$. Let $x(t)$ and $y(t)$ be the solution of \eqref{eq:ode} with $q=q_1$ and $q=q_2$ respectively, and $x(0)=y(0)=0$.
Since $x'(t)\geq q_1-\max\beta=\delta>0$, we can define
$$ h(s)=y\circ x^{-1}(s)-s,$$
solution of 
\begin{eqnarray*}
 h'(s) & = & \frac{q_1-\beta(x+h(x))+\eta}{q_1-\beta(x)}-1\\
 & = & \frac{\beta(x)-\beta(x+h(x))+\eta}{q_1-\beta(x)}\\
 & \leq & \frac{K}{\delta}(h(x)+\eta)
 \end{eqnarray*}
Since $h(0)=y(x^{-1}(0))-0=0$,  the Gronwall's lemma implies
\begin{equation}\label{eq:h1} 
h(1) \leq C \eta.
\end{equation}
If we denote $t_c$ such that $x(t_c)=1$ (so that $r(q_1)=1/t_c$), then (\ref{eq:h1}) implies
$$ y(t_c)\leq 1+C\eta.$$
Finally, since $y'(t)\geq \delta$, this implies $t_2$, such that $y(t_2)=1$ satisfies $t_2  \geq t_c -C\eta$ and so
$$ r(q_2)\leq r(q_1)+C\eta$$
for some $C$ depending on $q_1$ and $r(q_1)$.

\medskip

The constant $C$ degenerates when $r(q)$ goes to zero, so we need different argument to prove (ii):
Let $\eta = q-q_0$. Assume (with loss of generality) that $\max \beta=\beta(0)$ (and so $\beta'(0)=0$). If $\beta$ is $C^2$, then
$$ \beta(x) \geq \beta(0)  -Cx^2 = q_0-Cx^2, \quad \mbox{ for all } x$$
and so
$$ q -\beta(x) \leq \eta +Cx^2.$$

Let now $x(t)$ be the corresponding solution of \eqref{eq:ode}. Up to a translation in time, we can always assume that $x(0)=0$. We then have
$$ x'(t)\leq \eta+Cx(t)^2$$
We deduce
$$ x(t)\leq \frac{C\eta t}{1-C\sqrt\eta t} \quad \mbox{ for } t\leq (C\sqrt \eta)^{-1}.$$ 
In particular, $t_c$, defined by $x(t_c)=1$ satisfies
$$ \frac{1}{t_c}\leq C(\eta+\sqrt\eta)$$
hence the first result.

When $\beta$ is only Lipschitz, a similar argument yields
$$ x'(t)\leq \eta+Cx(t)$$
 which gives the second result.
\end{proof}

\end{document}